\documentclass{amsart}
\usepackage[T1]{fontenc}

\usepackage{amsmath}
\usepackage{amssymb}
\usepackage{bm}
\usepackage{mathrsfs}
\usepackage{accents}

\usepackage{float}
\usepackage{graphicx,color}
\usepackage{ascmac}

\usepackage{tabularx}

\usepackage{tikz}
\usetikzlibrary{calc,decorations.markings}

\usepackage{amsthm}
\theoremstyle{definition}
\newtheorem{THM}{Theorem}
\newtheorem{LEM}[THM]{Lemma}
\newtheorem{PROP}[THM]{Proposition}

\newtheorem{DEF}[THM]{Definition}
\newtheorem{RMK}[THM]{Remark}

\newtheorem{QUE}[THM]{Question}
\newtheorem*{THM*}{Theorem}
\newtheorem*{LEM*}{Lemma}
\newtheorem*{PROP*}{Proposition}
\newtheorem*{COR*}{Corollary}
\newtheorem*{DEF*}{Definition}
\newtheorem*{RMK*}{Remark}
\newtheorem*{EX*}{Example}

\numberwithin{figure}{section}
\numberwithin{equation}{section}
\numberwithin{THM}{section}

\newcommand{\TL}{\mathbf{Sp}_q}
\newcommand{\CK}[1]{\mathcal{S}_{(#1)}}
\newcommand{\Kar}{\operatorname{Kar}(\mathbf{Sp}_q)}

\title{A graphical categorification of the two-variable Chebyshev polynomials of the second kind}
\author{Wataru Yuasa}
\date{}
\address{Department of Mathematics\\
  Kyoto University\\
  Kitashirakawa Oiwake-cho, Sakyo-ku, Kyoto 606-8502, Japan}
\email[]{wyuasa@kyoto-u.math.ac.jp}

\thanks{This work was supported by KAKENHI (15H05739).}
\subjclass[2010]{13D52, 16T20, 57M25}
\keywords{Categorification; $A_2$ spider, Temperley-Lieb category; Chebyshev polynomial.}

\begin{document}
\begin{abstract}
We show that the $A_2$ clasps in the Karoubi envelope of $A_2$ spider satisfy the recursive formula of the two-variable Chebyshev polynomials of the second kind associated with a root system of type $A_2$. 
The $A_2$ spider is a diagrammatic description of the representation category for $U_q(\mathfrak{sl}_3)$ and the $A_2$ clasps are projectors.
Our categorification also gives a natural definition of a $q$-deformation of the two-variable Chebyshev polynomials.
This paper is constructed based only on the linear skein theory and graphical calculus.
\end{abstract}
\maketitle

\tikzset{->-/.style={decoration={
  markings,
  mark=at position #1 with {\arrow[black,thin]{>}}},postaction={decorate}}}
\tikzset{-<-/.style={decoration={
  markings,
  mark=at position #1 with {\arrow[black,thin]{<}}},postaction={decorate}}}
\tikzset{-|-/.style={decoration={
  markings,
  mark=at position #1 with {\arrow[black,thin]{|}}},postaction={decorate}}}
\tikzset{
    triple/.style args={[#1] in [#2] in [#3]}{
        #1,preaction={preaction={draw,#3},draw,#2}
    }
}

\section{Introduction}
In this paper, we give a categorification of a two-variable Chebyshev polynomial of the second kind inspired by Queffelec and Wedrich~\cite{QueffelecWedrich17A, QueffelecWedrich18A}.
They categorified the Chebyshev polynomials and power-sum symmetric polynomials through diagrammatic categories.
In~\cite{QueffelecWedrich17A}, it is shown that the Jones-Wenzl projectors satisfy the recursive formula of the Chebyshev polynomials of the second kind (resp.~first kind) in the split Grothendieck group of the Karoubi envelope of the Temperley-Lieb category (resp. an affine version of the Temperley-Lieb category). 

From representation theoretical point of view, 
many mathematician and physicist have studied multi-variable generalizations of the Chebyshev polynomials associated with root systems.
For examples, Koornwinder~\cite{Koornwinder74, Koornwinder74b, Koornwinder74c, Koornwinder74d}, Hoffman and Withers~\cite{HoffmanWithers88} for type~$A$, and \cite{NesterenkoPateraszajewskatereszkiewicz10, NesterenkoPateraTereszkiewicz11} in general. 
We consider the following problem:
\begin{QUE}
Give diagrammatic categorifications of multi-variable generalizations of the Chebyshev polynomials.
\end{QUE}

We treat with a two-variable generalization of the Chebyshev polynomials, called the {\em $A_2$ Chebyshev polynomials}, of the second kind appearing in~\cite{Koornwinder74, Koornwinder74b, Koornwinder74c, Koornwinder74d} and give a solution of the above question.  
The $A_2$ Chebyshev polynomials of the second kind is a family of two-variable polynomials $\{\CK{k,l}(x,y)\mid k,l \in\mathbb{Z}_{{}\geq 0}\}$ with integer coefficients. 
It is defined by the following recursive formulas~\cite{Koornwinder74c}, \cite{NesterenkoPateraSzajewskaTereszkiewicz10}:
\begin{align*}
\CK{k+1,l}(x,y)&=x\CK{k,l}(x,y)-\CK{k-1,l+1}(x,y)-\CK{k,l-1}(x,y) \text{ for } k,l\geq 1,\\
\CK{k,l+1}(x,y)&=y\CK{k,l}(x,y)-\CK{k+1,l-1}(x,y)-\CK{k-1,l}(x,y) \text{ for } k,l\geq 1,
\end{align*}
$\CK{k,l}(x,y)=0$ if $k<0$ or $l<0$, and $\CK{0,0}(x,y)=1$.

The categorification of $\CK{k,l}$ is given by using the {\em $A_2$ spider}.
The $A_2$ spider defined in~\cite{Kuperberg96} gives a diagrammatic description of the invariant space $\operatorname{Inv}(V_{\epsilon_1}\otimes V_{\epsilon_2}\otimes\dots\otimes V_{\epsilon_n})$ of the fundamental irreducible representations $V_{+}$ and $V_{-}$ of $U_q(\mathfrak{sl}_3)$ and their tensor products. 
An invariant vector is described as a linear combination of directed uni-trivalent planar graphs with source and sink vertices.
We call it {\em $A_2$ web}. 
By using $A_2$ webs, 
we can consider the $A_2$ version of the Temperley-Lieb category, namely a linear category of intertwining operators between tensor powers of $V_{+}$'s and $V_{-}$'s, through $\operatorname{Hom}(A,B)\cong \operatorname{Inv}(A^{*}\otimes B)$. 
We call it also the $A_2$ spider in this paper, and denote it by $\TL$.
Kuperberg also defined the (internal) $A_2$ clasp $P_{{+}^k{-}^l}^{{+}^k{-}^l}$ in $\operatorname{Hom}(V_{+}^{\otimes k}\otimes V_{-}^{\otimes l},V_{+}^{\otimes k}\otimes V_{-}^{\otimes l})$ for $k,l\geq 0$ as a generalization of the Joens-Wenzl idempotent. 
Ohtsuki and Yamada gave a recursive definition of $P_{{+}^k{-}^l}^{{+}^k{-}^l}$ in~\cite{OhtsukiYamada97}.
Then, $P_{{+}^k{-}^l}^{{+}^k{-}^l}$ give the $A_2$ Chebyshev polynomial in the following sense.
Let $P_{(k,l)}$ be an object in the Karoubi envelope $\Kar$ of $\TL$ corresponding to the idempotent $P_{{+}^k{-}^l}^{{+}^k{-}^l}$.
\begin{THM}\label{mainthm}
The isomorphism classes of $\{P_{(k,l)}\}_{k,l\geq 0}$ satisfy the recursive formula of $\{\CK{k,l}\}_{k,l\geq 0}$ in the split Grothendieck group $K_0(\operatorname{Kar}(\mathbf{Sp}))$ where $\mathbf{Sp}$ is the $A_2$ spider at $q=1$.
\end{THM}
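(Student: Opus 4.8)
The plan is to exploit the monoidal structure of the $A_2$ spider. First I would observe that the tensor product on $\mathbf{Sp}$ (juxtaposition of webs) makes $\operatorname{Kar}(\mathbf{Sp})$ a monoidal category, so that $K_0(\operatorname{Kar}(\mathbf{Sp}))$ is a commutative ring in which $[A\otimes B]=[A][B]$. I set $x=[P_{(1,0)}]$ and $y=[P_{(0,1)}]$, the classes of the single $+$- and $-$-strands, which are themselves clasps. Since multiplication by $x$ (resp.\ $y$) is induced by $-\otimes P_{(1,0)}$ (resp.\ $-\otimes P_{(0,1)}$), the whole theorem reduces to establishing the two direct-sum decompositions
\begin{align*}
P_{(k,l)}\otimes P_{(1,0)}&\cong P_{(k+1,l)}\oplus P_{(k-1,l+1)}\oplus P_{(k,l-1)},\\
P_{(k,l)}\otimes P_{(0,1)}&\cong P_{(k,l+1)}\oplus P_{(k+1,l-1)}\oplus P_{(k-1,l)}
\end{align*}
in $\operatorname{Kar}(\mathbf{Sp})$, with the convention that $P_{(a,b)}=0$ whenever $a<0$ or $b<0$. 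Passing to classes and rearranging the first (resp.\ second) identity reproduces exactly the first (resp.\ second) recursion for $\CK{k,l}$, the convention makes the boundary cases $k=0$ or $l=0$ drop out automatically (matching $\CK{k,l}=0$ for $k<0$ or $l<0$), and the base case $P_{(0,0)}\otimes P_{(1,0)}\cong P_{(1,0)}$ gives $\CK{1,0}=x$.

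To prove the first decomposition (the second is symmetric under $+\leftrightarrow-$), I would exhibit a complete system of orthogonal idempotents inside the corner algebra $\operatorname{End}_{\operatorname{Kar}(\mathbf{Sp})}(P_{(k,l)}\otimes P_{(1,0)})$, which consists of $A_2$ webs of the appropriate boundary pre- and post-composed with $P_{{+}^k{-}^l}^{{+}^k{-}^l}\otimes\operatorname{id}_{+}$. The top idempotent is the larger clasp $P_{{+}^{k+1}{-}^l}^{{+}^{k+1}{-}^l}$ itself, whose absorption property (a clasp swallows any smaller clasp adjacent to it) identifies its image with $P_{(k+1,l)}$. The remaining two idempotents are built from the trivalent web generators that fuse the new $+$-strand with one of the existing boundary strands and then cap off, producing the descents to the weights $(k-1,l+1)$ and $(k,l-1)$; the requisite scalars are ratios of quantum dimensions dictated by the bigon, triangle, and square reduction relations. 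The engine for writing these down and for checking completeness is the recursive definition of the $A_2$ clasp due to Ohtsuki and Yamada, which expresses $P_{{+}^{k+1}{-}^l}^{{+}^{k+1}{-}^l}$ as $P_{{+}^k{-}^l}^{{+}^k{-}^l}\otimes\operatorname{id}_{+}$ minus precisely the correction terms that, after normalization, become the other two idempotents.

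The main obstacle is confirming that these three maps are genuinely idempotent, mutually orthogonal, and sum to the identity of $P_{(k,l)}\otimes P_{(1,0)}$. This is delicate graphical bookkeeping: the correction terms in the Ohtsuki--Yamada recursion carry coefficients that are ratios of quantum integers $[n]=(q^n-q^{-n})/(q-q^{-1})$, and the vanishing of the cross terms must be extracted from the clasp-killing property, namely that a clasp annihilates every web containing a turnback or factoring through a strictly smaller weight. The boundary cases $k=0$ or $l=0$, where one descent idempotent is forced to be zero because the corresponding clasp is a zero object, follow the same pattern but must be recorded separately. Since $K_0$ forgets these coefficients and retains only the multiplicities, each equal to $1$, and since at $q=1$ the quantum integers specialize to the ordinary integers $[n]=n$ (all nonzero for $n\geq 1$, so the clasps remain well-defined idempotents), the two decompositions yield precisely the integer-coefficient recursions of $\{\CK{k,l}\}$, completing the proof.
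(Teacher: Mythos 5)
Your proposal takes essentially the same route as the paper: the theorem is reduced to the direct-sum decompositions $P_{(k,l)}\otimes P_{(1,0)}\cong P_{(k+1,l)}\oplus P_{(k-1,l+1)}\oplus P_{(k,l-1)}$ and its mirror image, which the paper likewise proves by splitting off $P_{{+}^{k+1}{-}^{l}}^{{+}^{k+1}{-}^{l}}$ via the Ohtsuki--Yamada/Kim single-clasp expansion and realizing the two correction terms as biproduct data (projections and inclusions built from $t_{{+}{+}}^{-}$, $d_{{+}{-}}$ and smaller clasps) for $P_{(k-1,l+1)}$ and $P_{(k,l-1)}$, with the boundary cases $l=0$ and $(k,l)=(1,1)$ handled separately and the $Y$-relations obtained by the star symmetry. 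The ``delicate graphical bookkeeping'' you defer is exactly where the bulk of the paper's proof lives, but your outline of what must be checked and how is accurate.
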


In the above, we consider the $\mathbb{C}$-linear category $\TL$ at $q=1$. 
However, we will show a reccursive formula of $P_{(k,l)}$ for the $\mathbb{C}(q^{\frac{1}{6}})$-linear category $\TL$ and obtain the proof by specializing $q=1$. 
This recursive formula of $P_{(k,l)}$ gives a natural $q$-deformation of the $A_2$ Chebyshev polynomial. 
Many kinds of $q$-deformations of the Chebyshev polynomials have been studied in various contexts, for example, \cite{Dupont10}, \cite{Cigler12A, Cigler12B}.

This paper is organized as follows. 
In Section~2, we recall definitions and properties of the $A_2$ spider and the $A_2$ clasps. 
They are an $A_2$ version of the Temperley-Lieb category and the Jones-Wenzl projectors. 
We categorify the $A_2$ Chebyshev polynomials in Section~3. 
All proofs are given by diagrammatic calculations in the $A_2$ spider.

\section{The $A_2$ spider and the $A_2$ clasp}
In this section, 
we introduce an $A_2$ version of the Temperley-Lieb category $\TL$ based on Kuperberg's $A_2$ spider~\cite{Kuperberg96}. 
Our definition of $\TL$ is only for proof of Theorem~\ref{mainthm}. 
Evans and Pugh studied the details about the $A_2$ version of the Temperley-Lieb category and the $A_2$ planar algebra in \cite{EvansPugh10, EvansPugh11}.
We also define the $A_2$ clasps which play a role analogous to the Jones-Wenzl projectors and show some fundamental properties. 
This section is constructed by the linear skein theory and diagrammatic calculations.
\subsection{The $A_2$ spider}
An {\em $A_2$ web} is a linear combination of graphs in a disk $D=[0,1]\times[0,1]$ with marked points on $[0,1]\times\{0,1\}$ which represents an intertwining operator of $U_q(\mathfrak{sl}_3)$ in manner of Reshetikhin and Turaev~\cite{ReshetikhinTuraev91}.
See a Turaev's book~\cite{Turaev94} and \cite{BakalovKirillov01} for details.
We only give a combinatorial definition of the $A_2$ spider which is a linear category constructed from $A_2$ webs. 
See \cite{Kuperberg94, Kuperberg96} for details on relation to representation theory of $U_q(\mathfrak{sl}_3)$.

Let us recall the $A_2$ web defined by Kuperberg~\cite{Kuperberg96}. 
For any $n\in\mathbb{Z}_{{}\geq 0}$, 
we denote sets of marked points on $[0,1]\times\{0,1\}$ by $P_n=\{\,p_1, p_2, \dots, p_{n}\,\}$ and $Q_n=\{\,q_1, q_2, \dots, q_{n}\,\}$ where $p_i=(i/(n+1),0)$ and $q_i=(i/(n+1),1)$ for $1\leq i\leq n$. 
If $n=0$, then $P_0=Q_0=\emptyset$.
A {\em sign} of $P_{n}$ is a map $\varepsilon_{P_n}\colon P_n\to\{{+},{-}\}$. 
The sign $\varepsilon_{P_n}$ is defined by the sequence $\varepsilon_{P_n(1)}\varepsilon_{P_n(2)}\dots\varepsilon_{P_n(n)}$ of ${+}$ and ${-}$.
A sign of $Q_n$ is defined in the same way.
We consider $D$ with marked points $P_k$ and $Q_l$ with signs $\varepsilon_{P_k}$ and $\varepsilon_{Q_l}$ where $k,l\in\mathbb{Z}_{{}\geq 0}$.
A {\em bipartite uni-trivalent graph} $G$ in $D$ is a directed graph embedded into $D$ such that every vertex is either trivalent or univalent and the vertices are divided into sinks or sources as follows:
\begin{itemize}
\item \,\tikz[baseline=-.6ex]{
\draw [thin, dashed, fill=white] (0,0) circle [radius=.5];
\draw[-<-=.5] (0:0) -- (90:.5); 
\draw[-<-=.5] (0:0) -- (210:.5); 
\draw[-<-=.5] (0:0) -- (-30:.5);
\node (v) at (0,0) [above right]{$v$};
\fill (0,0) circle [radius=1pt];
}\, 
or 
\,\tikz[baseline=-.6ex]{
\draw [thin, dashed] (0,0) circle [radius=.5];
\clip (0,0) circle [radius=.5];
\draw [thin, fill=white] (0,-.5) rectangle (.5,.5);
\draw[-<-=.5] (0,0)--(.5,0);
\node (p) at (0,0) [left]{${+}$};
\node at (0,0) [above right]{$v$};
\draw [fill=cyan] (0,0) circle [radius=1pt];
}\, if $v$ is a sink,
\item \tikz[baseline=-.6ex]{
\draw [thin, dashed, fill=white] (0,0) circle [radius=.5];
\draw[->-=.5] (0:0) -- (90:.5); 
\draw[->-=.5] (0:0) -- (210:.5); 
\draw[->-=.5] (0:0) -- (-30:.5);
\node (v) at (0,0) [above right]{$v$};
\fill (0,0) circle [radius=1pt];
}\,
or \,\tikz[baseline=-.6ex]{
\draw [thin, dashed] (0,0) circle [radius=.5];
\clip (0,0) circle [radius=.5];
\draw [thin, fill=white] (0,-.5) rectangle (.5,.5);
\draw[->-=.5] (0,0)--(.5,0);
\node (p) at (0,0) [left]{${-}$};
\node at (0,0) [above right]{$v$};
\draw [fill=cyan] (0,0) circle [radius=1pt];
}\, if $v$ is a source.
\end{itemize}
An {\em $A_2$ basis web} is the boundary-fixing isotopy class of a bipartite trivalent graph $G$ in $D$ such that any internal face of $D\setminus G$ has at least six sides. 
Let $\epsilon_k$ and $\epsilon_l$ be sequences of ${+}$ and ${-}$ with length $k$ and $l$, respectively.
We denote the set of $A_2$ basis webs in $D$ with signed marked points $\varepsilon_{P_k}=\epsilon_k$ and $\varepsilon_{Q_l}=\epsilon_l$ by $B(\epsilon_k,\epsilon_l)$.
The {\em $A_2$ web space $W(\epsilon_{k},\epsilon_{l})$} is the $\mathbb{C}(q^{\frac{1}{6}})$-vector space spanned by $B(\epsilon_{k},\epsilon_{l})$.
An {\em $A_2$ web} is an element in the $A_2$ web space. 

For example,
$B{(+-+,-+-)}$ has the following $A_2$ basis webs:
\[
\,\begin{tikzpicture}[scale=.1]
\draw[-<-=.5] (2,0) -- (2,12);
\draw[->-=.5] (6,0) -- (6,12);
\draw[-<-=.5] (10,0) -- (10,12);
\draw[thick, gray] (0,0) rectangle (12,12);
\draw[fill=cyan] (2,0) circle [radius=.5];
\draw[fill=cyan] (6,0) circle [radius=.5];
\draw[fill=cyan] (10,0) circle [radius=.5];
\node at (2,0) [below]{$\scriptstyle{+}$};
\node at (6,0) [below]{$\scriptstyle{-}$};
\node at (10,0) [below]{$\scriptstyle{+}$};
\draw[fill=cyan] (2,12) circle [radius=.5];
\draw[fill=cyan] (6,12) circle [radius=.5];
\draw[fill=cyan] (10,12) circle [radius=.5];
\node at (2,12) [above]{$\scriptstyle{-}$};
\node at (6,12) [above]{$\scriptstyle{+}$};
\node at (10,12) [above]{$\scriptstyle{-}$};
\end{tikzpicture}\,,
\,\begin{tikzpicture}[scale=.1]
\draw[->-=.5] (2,12) to[out=south, in=west] (4,8) to[out=east, in=south] (6,12);
\draw[-<-=.5] (2,0) to[out=north, in=west] (6,6) to[out=east, in=north](10,12);
\draw[->-=.5] (6,0) to[out=north, in=west] (8,4) to[out=east, in=north] (10,0);
\draw[thick, gray] (0,0) rectangle (12,12);
\draw[fill=cyan] (2,0) circle [radius=.5];
\draw[fill=cyan] (6,0) circle [radius=.5];
\draw[fill=cyan] (10,0) circle [radius=.5];
\node at (2,0) [below]{$\scriptstyle{+}$};
\node at (6,0) [below]{$\scriptstyle{-}$};
\node at (10,0) [below]{$\scriptstyle{+}$};
\draw[fill=cyan] (2,12) circle [radius=.5];
\draw[fill=cyan] (6,12) circle [radius=.5];
\draw[fill=cyan] (10,12) circle [radius=.5];
\node at (2,12) [above]{$\scriptstyle{-}$};
\node at (6,12) [above]{$\scriptstyle{+}$};
\node at (10,12) [above]{$\scriptstyle{-}$};
\end{tikzpicture}\,,
\,\begin{tikzpicture}[scale=.1]
\draw[-<-=.5] (2,0) to[out=north, in=west] (4,4) to[out=east, in=north] (6,0);
\draw[-<-=.5] (10,0) to[out=north, in=east] (6,6) to[out=west, in=north](2,12);
\draw[-<-=.5] (6,12) to[out=south, in=west] (8,8) to[out=east, in=south] (10,12);
\draw[thick, gray] (0,0) rectangle (12,12);
\draw[fill=cyan] (2,0) circle [radius=.5];
\draw[fill=cyan] (6,0) circle [radius=.5];
\draw[fill=cyan] (10,0) circle [radius=.5];
\node at (2,0) [below]{$\scriptstyle{+}$};
\node at (6,0) [below]{$\scriptstyle{-}$};
\node at (10,0) [below]{$\scriptstyle{+}$};
\draw[fill=cyan] (2,12) circle [radius=.5];
\draw[fill=cyan] (6,12) circle [radius=.5];
\draw[fill=cyan] (10,12) circle [radius=.5];
\node at (2,12) [above]{$\scriptstyle{-}$};
\node at (6,12) [above]{$\scriptstyle{+}$};
\node at (10,12) [above]{$\scriptstyle{-}$};
\end{tikzpicture}\,,
\,\begin{tikzpicture}[scale=.1]
\draw[-<-=.5] (2,0) -- (2,12);
\draw[->-=.5] (6,0) to[out=north, in=west] (8,4) to[out=east, in=north] (10,0);
\draw[-<-=.5] (6,12) to[out=south, in=west] (8,8) to[out=east, in=south] (10,12);
\draw[thick, gray] (0,0) rectangle (12,12);
\draw[fill=cyan] (2,0) circle [radius=.5];
\draw[fill=cyan] (6,0) circle [radius=.5];
\draw[fill=cyan] (10,0) circle [radius=.5];
\node at (2,0) [below]{$\scriptstyle{+}$};
\node at (6,0) [below]{$\scriptstyle{-}$};
\node at (10,0) [below]{$\scriptstyle{+}$};
\draw[fill=cyan] (2,12) circle [radius=.5];
\draw[fill=cyan] (6,12) circle [radius=.5];
\draw[fill=cyan] (10,12) circle [radius=.5];
\node at (2,12) [above]{$\scriptstyle{-}$};
\node at (6,12) [above]{$\scriptstyle{+}$};
\node at (10,12) [above]{$\scriptstyle{-}$};
\end{tikzpicture}\,,
\,\begin{tikzpicture}[scale=.1]
\draw[-<-=.5] (2,0) to[out=north, in=west] (4,4) to[out=east, in=north] (6,0);
\draw[->-=.5] (2,12) to[out=south, in=west] (4,8) to[out=east, in=south] (6,12);
\draw[-<-=.5] (10,0) -- (10,12);
\draw[thick, gray] (0,0) rectangle (12,12);
\draw[fill=cyan] (2,0) circle [radius=.5];
\draw[fill=cyan] (6,0) circle [radius=.5];
\draw[fill=cyan] (10,0) circle [radius=.5];
\node at (2,0) [below]{$\scriptstyle{+}$};
\node at (6,0) [below]{$\scriptstyle{-}$};
\node at (10,0) [below]{$\scriptstyle{+}$};
\draw[fill=cyan] (2,12) circle [radius=.5];
\draw[fill=cyan] (6,12) circle [radius=.5];
\draw[fill=cyan] (10,12) circle [radius=.5];
\node at (2,12) [above]{$\scriptstyle{-}$};
\node at (6,12) [above]{$\scriptstyle{+}$};
\node at (10,12) [above]{$\scriptstyle{-}$};
\end{tikzpicture}\,,
\,\begin{tikzpicture}[scale=.1]
\draw[-<-=.5] (2,0) to[out=north, in=west] (3,4);
\draw[->-=.5] (6,0) -- (6,3);
\draw[-<-=.5] (10,0) to[out=north, in=east] (9,4);
\draw[->-=.5] (2,12) to[out=south, in=west] (3,8);
\draw[-<-=.5] (6,12) -- (6,9);
\draw[->-=.5] (10,12) to[out=south, in=east] (9,8);
\draw[->-=.5] (3,4) -- (3,8);
\draw[->-=.5] (9,4) -- (9,8);
\draw[-<-=.5] (3,8) -- (6,9);
\draw[->-=.5] (3,4) -- (6,3);
\draw[-<-=.5] (9,8) -- (6,9);
\draw[->-=.5] (9,4) -- (6,3);
\draw[thick, gray] (0,0) rectangle (12,12);
\draw[fill=cyan] (2,0) circle [radius=.5];
\draw[fill=cyan] (6,0) circle [radius=.5];
\draw[fill=cyan] (10,0) circle [radius=.5];
\node at (2,0) [below]{$\scriptstyle{+}$};
\node at (6,0) [below]{$\scriptstyle{-}$};
\node at (10,0) [below]{$\scriptstyle{+}$};
\draw[fill=cyan] (2,12) circle [radius=.5];
\draw[fill=cyan] (6,12) circle [radius=.5];
\draw[fill=cyan] (10,12) circle [radius=.5];
\node at (2,12) [above]{$\scriptstyle{-}$};
\node at (6,12) [above]{$\scriptstyle{+}$};
\node at (10,12) [above]{$\scriptstyle{-}$};
\end{tikzpicture}\,.
\]

\begin{DEF}[The $A_2$ spider]
The {\em $A_2$ spider} $\TL$ is a $\mathbb{C}(q^{\frac{1}{6}})$-linear category defined as follows. 
\begin{itemize}
\item An object of $\TL$ is a finite sequence of signs, that is, a map $\epsilon_n\colon\underline{n}\to\{{+},{-}\}$ where $\underline{n}=\{1<2<\dots<n\}$ is a finite totally ordered set for $n\in\mathbb{Z}_{{}>0}$ where the map $\epsilon_0$ is the map from $\underline{0}=\emptyset$. 
A morphism $\TL(\epsilon_k,\epsilon_l)=\operatorname{Hom}_{\TL}(\epsilon_k,\epsilon_l)$ is the $A_2$ web space $W(\bar{\epsilon}_{k},\epsilon_{l})$ where $\bar{\epsilon}_k$ is the opposite sign of $\epsilon_k$. 
We remark that $\TL(\epsilon_0,\epsilon_0)$ is the $1$-dimensional vector space spanned by the empty web $D_\emptyset$.
\item For $A_2$ basis webs $F\in\TL(\epsilon_k,\epsilon_l)$ and $G\in\TL(\epsilon_l,\epsilon_m)$, the composition $G\circ F=GF\in\TL(\epsilon_k,\epsilon_m)$ is defined by gluing top side of the disk of $F$ and bottom side of the disk of $G$. 
If $GF$ makes $4$-, $2$-, and $0$-gons, we reduce them by
\begin{itemize}
\item 
$\,\tikz[baseline=-.6ex, rotate=90, scale=.8]{
\draw[thin, dashed, fill=white] (0,0) circle [radius=.5];
\draw[->-=.6] (-45:.5) -- (-45:.3);
\draw[->-=.6] (-135:.5) -- (-135:.3);
\draw[-<-=.6] (45:.5) -- (45:.3);
\draw[->-=.6] (135:.5) -- (135:.3);
\draw[->-=.5] (45:.3) -- (135:.3);
\draw[-<-=.5] (-45:.3) -- (-135:.3);
\draw[->-=.5] (45:.3) -- (-45:.3);
\draw[-<-=.5] (135:.3) -- (-135:.3);
}\,
=
\,\tikz[baseline=-.6ex, rotate=90, scale=.8]{
\draw[thin, dashed, fill=white] (0,0) circle [radius=.5];
\draw[->-=.5] (-45:.5) to [out=north west, in=south](.2,0) to [out=north, in=south west](45:.5);
\draw[-<-=.5] (-135:.5) to [out=north east, in=south](-.2,0) to [out=north, in=south east] (135:.5);
}\,
+
\,\tikz[rotate=90, baseline=-.6ex, rotate=90, scale=.8]{
\draw[thin, dashed, fill=white] (0,0) circle [radius=.5];
\draw[-<-=.5] (-45:.5) to [out=north west, in=south](.2,0) to [out=north, in=south west](45:.5);
\draw[->-=.5] (-135:.5) to [out=north east, in=south](-.2,0) to [out=north, in=south east] (135:.5);
}\,
$,\vspace{1ex}
\item 
$\,\tikz[baseline=-.6ex, rotate=-90, scale=.8]{
\draw[thin, dashed, fill=white] (0,0) circle [radius=.5];
\draw[->-=.5] (0,-.5) -- (0,-.25);
\draw[->-=.5] (0,.25) -- (0,.5);
\draw[-<-=.5] (0,-.25) to [out=60, in=120, relative](0,.25);
\draw[-<-=.5] (0,-.25) to [out=-60, in=-120, relative](0,.25);
}\,
=
\left[2\right]\,\tikz[baseline=-.6ex, rotate=-90, scale=.8]{
\draw[thin, dashed, fill=white] (0,0) circle [radius=.5];
\draw[->-=.5] (0,-.5) -- (0,.5);
}\,
$,\vspace{1ex}
\item 
$
\,\tikz[baseline=-.6ex, scale=.8]{
\draw[thin, dashed, fill=white] (0,0) circle [radius=.5];
\draw[->-=.5] (0,0) circle [radius=.3];
}\,
=
\left[3\right]
\,\tikz[baseline=-.6ex, scale=.8]{
\draw[thin, dashed, fill=white] (0,0) circle [radius=.5];
}\,
\quad\,\tikz[baseline=-.6ex, scale=.8]{
\draw[thin, dashed, fill=white] (0,0) circle [radius=.5];
\draw[-<-=.5] (0,0) circle [radius=.3];
}\, ,
=
\left[3\right]
\,\tikz[baseline=-.6ex, scale=.8]{
\draw[thin, dashed, fill=white] (0,0) circle [radius=.5];
}\, ,
$
\end{itemize}
where $[n]=(q^{\frac{n}{2}}-q^{-\frac{n}{2}})/(q^{\frac{1}{2}}-q^{-\frac{1}{2}})$.
\item The identity morphism $\mathbf{1}_{\epsilon_k}$ in $\TL(\epsilon_k,\epsilon_k)$ is the $k$ parallel edges with no trivalent vertex.
\item A tensor product of two objects is defined by their concatenation, that is, $\epsilon_k\otimes\epsilon_{k'}$ is a map $\underline{k+{k'}}\to\{{+},{-}\}$ such that $\epsilon_k\otimes\epsilon_{k'}(i)=\epsilon_k(i)$ if $0\leq i\leq k$ and $\epsilon_k\otimes\epsilon_{k'}(i)=\epsilon_{k'}(i-k)$ if $k+1\leq i\leq k+k'$.
The tensor product of two $A_2$ basis webs $F\in\TL(\epsilon_k,\epsilon_l)$ and $G\in\TL(\epsilon_k',\epsilon_l')$ is defined by gluing right side of the disk of $F$ and left side of the disk of $G$.
\end{itemize}
\end{DEF}

We remark that an $A_2$ web $f\in\TL(\epsilon_k,\epsilon_l)$ represents an intertwining operator in $\operatorname{Hom}(V_k,V_l)$ where $V_k=V_{\epsilon_{k}(1)}\otimes V_{\epsilon_{k}(2)}\otimes\dots\otimes V_{\epsilon_{k}(k)}$ and $V_l=V_{\epsilon_{l}(1)}\otimes V_{\epsilon_{l}(2)}\otimes\dots\otimes V_{\epsilon_{l}(l)}$.
As an invariant vector $f$ is an element in $\operatorname{Inv}(V_{k}^{*}\otimes V_{l})$ and
$V_{k}^{*}\cong V_{\bar{\epsilon}_k(k)}\otimes\dots\otimes V_{\bar{\epsilon}_k(2)}\otimes V_{\bar{\epsilon}_k(1)}$ because $V_{\pm}^*\cong V_{\mp}$. 
See \cite{Kuperberg96} in detail. 
The signs of marked points of the $A_2$ web space are labeled according to the signs of the invariant space.

\subsection{$A_2$ clasps}
We give a diagrammatic definition of an $A_2$ version of the Jones-Wenzl projectors, called the $A_2$ clasps, introduced in \cite{Kuperberg96, OhtsukiYamada97}. 
The purpose of this section is to construct the general form of the $A_2$ clasp and to prove its properties by only using the linear skein theory.

In what follows,
we omit ``${\otimes}$'' to describe the tensor product of objects in $\TL$ and ${+}$ (resp.~${-}$) means the constant map from $\underbar{1}$ to ${+}$ (resp.~${-}$). 
Thus, ${+}^k$ (resp.~${-}^k$) is the constant map from $\underbar{k}$ to ${+}$ (resp.~${-}$) for any positive integer $k$. 
\begin{DEF}[The $A_2$ clasp in $\TL({+}^{k},{+}^{k})$]\label{singledef}
\ 
\begin{enumerate}
\item
$\tikz[baseline=-.6ex, scale=.1]{
\draw[->-=.8] (0,-4) -- (0,4);
\draw[fill=white] (-3,-1) rectangle (3,1);
\node at (0,3) [left]{${\scriptstyle 1}$};
}\,
= 
\,\tikz[baseline=-.6ex, scale=.1]{
\draw[->-=.5] (0,-4) -- (0,4); 
}\,
=\mathbf{1}_{+}\quad\in \TL(+,+)$
\item
$\tikz[baseline=-.6ex, scale=.1]{
\draw[->-=.8] (0,-4) -- (0,4);
\draw[fill=white] (-3,-1) rectangle (3,1);
\node at (0,3) [left]{${\scriptstyle k}$};
}\,
= 
\,\tikz[baseline=-.6ex, scale=.1]{
\draw[->-=.8] (0,-4) -- (0,4);
\draw[->-=.5] (4,-4) -- (4,4);
\draw[fill=white] (-2,-.5) rectangle (2,.5);
\node at (0,3) [left]{${\scriptstyle k-1}$};
}\,
-\frac{[k-1]}{[k]}
\,\tikz[baseline=-.6ex, scale=.1]{
\draw[->-=.6] (1,3) -- (1,6);
\draw[->-=.5] (1,-6) -- (1,-3);
\draw[->-=.5] (0,-3) -- (0,3);
\draw[-<-=.5] (4,-1) -- (4,1);
\draw[-<-=.7] (2,3) to[out=south, in=west] (4,1);
\draw[-<-=.5] (6,6) -- (6,3) to[out=south, in=east] (4,1);
\draw[->-=.7, yscale=-1] (2,3) to[out=south, in=west] (4,1);
\draw[->-=.5, yscale=-1] (6,6) -- (6,3) to[out=south, in=east] (4,1);
\draw[fill=white, yshift=3cm] (-2,-.5) rectangle (3,.5);
\draw[fill=white, yshift=-3cm] (-2,-.5) rectangle (3,.5);
\node at (1,6) [left]{${\scriptstyle k-1}$};
\node at (1,-6) [left]{${\scriptstyle k-1}$};
\node at (0,0) [left]{${\scriptstyle k-2}$};
}\,\quad\in\TL({+}^k,{+}^k)$
\end{enumerate}
The above recursive formula defines the $A_2$ clasp in $\TL({+}^{k},{+}^{k})$ and we denote it by $P_{{+}^k}^{{+}^k}$.
The $A_2$ clasp in $\TL({-}^k,{-}^k)$ is also defined by the same way and we denote it by $P_{{-}^k}^{{-}^k}$.
\end{DEF}

We introduce the following $A_2$ webs:
\begin{equation}\label{tbd}
t_{-}^{++}
=
\,\tikz[baseline=-.6ex, scale=.1]{
\draw[->-=.5] (0:0) -- (45:4);
\draw[->-=.5] (0:0) -- (135:4);
\draw[->-=.5] (0:0) -- (270:3);
}\, , t_{+}^{--}
=
\,\tikz[baseline=-.6ex, scale=.1]{
\draw[-<-=.5] (0:0) -- (45:4);
\draw[-<-=.5] (0:0) -- (135:4);
\draw[-<-=.5] (0:0) -- (270:3);
}\,
, t_{--}^{+}
=
\,\tikz[baseline=-.6ex, scale=.1, yscale=-1]{
\draw[->-=.5] (0:0) -- (45:4);
\draw[->-=.5] (0:0) -- (135:4);
\draw[->-=.5] (0:0) -- (270:3);
}\,
, t_{++}^{-}
=
\,\tikz[baseline=-.6ex, scale=.1, yscale=-1]{
\draw[-<-=.5] (0:0) -- (45:4);
\draw[-<-=.5] (0:0) -- (135:4);
\draw[-<-=.5] (0:0) -- (270:3);
}\, ,
\end{equation}
\begin{equation*}
b^{-+}
=
\,\tikz[baseline=-.6ex, scale=.1]{
\draw[->-=.5] (-2,2) to[out=south, in=west] (0,-1) to[out=east, in=south] (2,2);
}\,
, b^{+-}
=
\,\tikz[baseline=-.6ex, scale=.1]{
\draw[-<-=.5] (-2,2) to[out=south, in=west] (0,-1) to[out=east, in=south] (2,2);
}\,
, d_{+-}
=
\,\tikz[baseline=-.6ex, scale=.1, yscale=-1]{
\draw[->-=.5] (-2,2) to[out=south, in=west] (0,-1) to[out=east, in=south] (2,2);
}\,
, d_{-+}
=
\,\tikz[baseline=-.6ex, scale=.1, yscale=-1]{
\draw[-<-=.5] (-2,2) to[out=south, in=west] (0,-1) to[out=east, in=south] (2,2);
}\,
\end{equation*}

An $A_2$ basis web $B(\bar{\epsilon}_{k},\epsilon_{k})$ provides a tiling of $D$. 
The following Lemma is easily shown by calculating the Euler number of the tiling.
\begin{LEM}[Ohtsuki and Yamada~{\cite[Lemma~3.3]{OhtsukiYamada97}}]\label{decomp}
For any $A_2$ basis web $D$ in $\TL(\epsilon_k,\epsilon_k)$ other than $\mathbf{1}_{\epsilon_k}$ has $t_{{-}}^{{+}{+}}$, $t_{{+}}^{{-}{-}}$, $b^{{-}{+}}$, or $b^{{+}{-}}$ in the top side of $D$ and $t_{{-}{-}}^{{+}}$, $t_{{+}{+}}^{{-}}$, $d_{{-}{+}}$, or $d_{{+}{-}}$ in the bottom side of $D$.
\end{LEM}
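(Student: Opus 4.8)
The plan is to read the required local pieces directly off the cell decomposition that a basis web induces on $D$, via an Euler-number count, after first reducing to the top boundary by symmetry. I would begin by noting that reflecting $D$ across the horizontal axis interchanges the four top-side pieces $t_{-}^{++}, t_{+}^{--}, b^{-+}, b^{+-}$ with the four bottom-side pieces $t_{--}^{+}, t_{++}^{-}, d_{-+}, d_{+-}$; hence it suffices to produce a top-side piece for every $D\neq\mathbf{1}_{\epsilon_k}$, and the bottom statement then follows.

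Next I would set up the count. Viewing the underlying graph $G$ together with $\partial D$ as a CW-decomposition of the disk, every marked point has degree $3$ (its unique web edge together with the two boundary arcs it separates) and every trivalent vertex has degree $3$, so with $t$ trivalent vertices the handshake identity gives $2E=3(t+2k)$. Euler's relation $V-E+F=1$ then gives $F=1+t/2+k$ regions, and since each web edge borders two regions while each of the $2k$ boundary arcs borders one, the total number of sides equals $3t+4k$. Assigning to a region the charge $s(F)-6$, where $s(F)$ is its number of sides, and using that every \emph{interior} region has $s(F)\ge 6$ by non-ellipticity, the total charge $-(2k+6)$ must be supplied entirely by the regions meeting $\partial D$; that is, $\sum_{F\cap\partial D\neq\emptyset}(6-s(F))\ge 2k+6$, with equality iff all interior regions are hexagons.

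The translation step is then local. A region meeting the boundary along a single top arc $(q_i,q_{i+1})$ with $s(F)=2$ is a turnback joining the two points, which by the source/sink bipartite rule must have opposite signs, giving $b^{-+}$ or $b^{+-}$; one with $s(F)=3$ is bounded by the arc and two edges meeting at a single interior trivalent vertex, whose two boundary legs then carry equal signs, giving $t_{-}^{++}$ or $t_{+}^{--}$. So the whole problem reduces to forcing such a $2$- or $3$-sided region along one of the $k-1$ top arcs, which I would attempt by an innermost choice: follow the two edges leaving an adjacent pair $q_i,q_{i+1}$ into the region beneath their arc and select the pair whose region encloses the least area, so that it closes up with no marked point inside and attains the minimal number of sides permitted by the signs of $q_i$ and $q_{i+1}$.

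The hard part will be this last localization, and it is genuinely where the orientation data enters rather than pure topology. The Euler count alone only guarantees small regions \emph{somewhere} on $\partial D$—the two lateral regions are harmless bigons even for $\mathbf{1}_{\epsilon_k}$—so I must pin the deficiency onto a top arc. Here the decisive input is the bipartite parity: walking the lower boundary of the region beneath $(q_i,q_{i+1})$, the vertices alternate between sources and sinks, so when $q_i,q_{i+1}$ carry \emph{equal} signs the region has an odd number of sides and its minimal non-identity value is $3$, a $t$-piece, whereas \emph{opposite} signs permit an even value and a turnback $b$-piece. Without this parity a rigid non-elliptic web (for instance an all-hexagon configuration) can make every boundary region a quadrilateral, so the count must be combined with the sign alternation to guarantee that the innermost top region really is a bigon or triangle; verifying that, and matching the resulting orientations to the four labelled pieces, is the only delicate point, the remainder being the routine Euler bookkeeping quoted in the statement.
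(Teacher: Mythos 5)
Your Euler bookkeeping is correct, and it is exactly the computation the paper alludes to (the paper itself offers only the one-line remark that the lemma is ``easily shown by calculating the Euler number of the tiling,'' deferring to Ohtsuki--Yamada): with $t$ trivalent vertices one gets $F=1+t/2+k$ faces, total side count $3t+4k$, hence $\sum_{F}(6-s(F))=2k+6$, all of which must be carried by the boundary faces. The reduction of the bottom statement to the top one via the $*$-reflection is also fine, as is the parity observation that adjacent equal-sign (resp.\ opposite-sign) endpoints force an odd (resp.\ even) number of sides on the face below their arc. The gap is precisely the localization step you flag at the end, and it cannot be closed the way you propose. First, the inequality $\sum_{F\cap\partial D\neq\emptyset}(6-s(F))\ge 2k+6$ does not pin any deficiency onto a \emph{top} arc: the $k-1$ bottom-arc faces could each be triangles (contributing $3$ apiece) and the two lateral faces bigons (contributing $4$ apiece), which already supplies $3(k-1)+8\ge 2k+6$ for every $k\ge 1$, so all top faces could a priori have $s\ge 5$ without violating the count. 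Second, ``innermost'' (least enclosed area, no marked point inside) does not imply ``fewest sides compatible with the signs'': the region beneath a top arc can perfectly well be a quadrilateral or pentagon while enclosing no marked point.

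More decisively, the statement as written for arbitrary $\epsilon_k$ is false, so no completion of your argument exists without restricting the sign sequence. The hexagonal web --- the last diagram in the paper's own list of basis webs of $B({+}{-}{+},{-}{+}{-})$, i.e.\ a non-identity basis web in $\TL({-}{+}{-},{-}{+}{-})$ --- has every boundary face a quadrilateral: each top and bottom marked point attaches to a distinct vertex of the internal hexagon, so no two adjacent endpoints are joined by a cap or by a common trivalent vertex, and none of the eight listed pieces occurs. This is exactly your ``all-hexagon configuration,'' and it is realized by an admissible orientation precisely when adjacent boundary signs alternate; your parity remark therefore rescues nothing in the opposite-sign case, and in the equal-sign case (e.g.\ $\epsilon_k={+}^k$, the case actually needed for Proposition~\ref{singleunique}) it only forces $s$ odd, not $s=3$. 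A correct proof must restrict $\epsilon_k$ to the sequences for which the lemma is actually invoked (${+}^k$, ${+}^k{-}^l$ and their relatives) and must then argue beyond the raw count --- for instance by induction, peeling off a lateral bigon when the extreme strand runs straight from $p_1$ to $q_1$, or by a genuine discharging of the boundary deficiency in the style of Kuperberg --- to force a triangle or bigon onto a top arc.
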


\begin{PROP}[Kuperberg~\cite{Kuperberg96}, Ohtsuki and Yamada~\cite{OhtsukiYamada97}]\label{singleproj} 
Let $k$ be a non-negative integer, then
\begin{enumerate}
\item $P_{{+}^k}^{{+}^k}(\mathbf{1}_{{+}^a}\otimes P_{{+}^l}^{{+}^l}\otimes\mathbf{1}_{{+}^b})=P_{{+}^k}^{{+}^k}=(\mathbf{1}_{{+}^a}\otimes P_{{+}^l}^{{+}^l}\otimes\mathbf{1}_{{+}^b})P_{{+}^k}^{{+}^k}$ for $a+b+l=k$,
\item $P_{{+}^k}^{{+}^k}(\mathbf{1}_{{+}^a}\otimes t_{-}^{++}\otimes\mathbf{1}_{{+}^b})=0=(\mathbf{1}_{{+}^a}\otimes t_{++}^{-}\otimes \mathbf{1}_{{+}^b})P_{{+}^k}^{{+}^k}$ for $a+b+2=k$.
\end{enumerate}
The above is true for the opposite sign.
\end{PROP}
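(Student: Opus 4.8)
The plan is to prove part~(2) first, by induction on $k$, and then to read off part~(1) from it. The opposite-sign statements and the two mirror-image equalities inside each part will follow from the two evident symmetries of the recursion in Definition~\ref{singledef}: the top--bottom reflection and the involution $+\leftrightarrow-$ (with reversed arrows) both fix $P_{{+}^k}^{{+}^k}$ and interchange $t_{-}^{++}$ with $t_{++}^{-}$. So it suffices to treat, say, $P_{{+}^k}^{{+}^k}(\mathbf 1_{{+}^a}\otimes t_{-}^{++}\otimes\mathbf 1_{{+}^b})=0$. Since, as shown below, part~(1) at a given level is a formal consequence of part~(2) at that same level, a single induction on $k$ proving~(2) suffices, with both~(1) and~(2) freely available at all lower levels inside the inductive step.

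Granting~(2), I would derive~(1) as follows. Expand $P_{{+}^l}^{{+}^l}=\mathbf 1_{{+}^l}+\sum_i c_i w_i$ in the basis $B({-}^l,{+}^l)$. By Lemma~\ref{decomp} each non-identity $w_i$ carries one of $t_{-}^{++},t_{+}^{--},b^{-+},b^{+-}$ on its top side; but the top boundary of $w_i$ has all signs ${+}$, and a turnback or a $t_{+}^{--}$ would force a ${-}$ among the top marked points, so the only possibility is $t_{-}^{++}$. Hence $\mathbf 1_{{+}^a}\otimes w_i\otimes\mathbf 1_{{+}^b}$, read from its interface with $P_{{+}^k}^{{+}^k}$, begins with $\mathbf 1_{{+}^{a'}}\otimes t_{-}^{++}\otimes\mathbf 1_{{+}^{b'}}$ for some $a'+b'+2=k$, and part~(2) annihilates it. Only the $\mathbf 1_{{+}^l}$ term survives, giving $P_{{+}^k}^{{+}^k}(\mathbf 1_{{+}^a}\otimes P_{{+}^l}^{{+}^l}\otimes\mathbf 1_{{+}^b})=P_{{+}^k}^{{+}^k}$; the second equality is the top--bottom mirror.

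For part~(2) I read the recursion in Definition~\ref{singledef} as $P_{{+}^k}^{{+}^k}=\bigl(\mathbf 1_{{+}^k}-\tfrac{[k-1]}{[k]}N\bigr)(P_{{+}^{k-1}}^{{+}^{k-1}}\otimes\mathbf 1_{+})$, where $N$ is the part of the correction term lying above its bottom clasp (the central $(k-2)$-cable, the right-hand trivalent gadget, and the top clasp). Thus composing on the bottom with $\mathbf 1_{{+}^a}\otimes t_{-}^{++}\otimes\mathbf 1_{{+}^b}$ factors through the common term $(P_{{+}^{k-1}}^{{+}^{k-1}}\otimes\mathbf 1_{+})(\mathbf 1_{{+}^a}\otimes t_{-}^{++}\otimes\mathbf 1_{{+}^b})$. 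The base case $k=2$ is the bigon relation: here $P_{{+}^2}^{{+}^2}=\mathbf 1_{{+}^2}-\tfrac1{[2]}t_{-}^{++}t_{++}^{-}$ and $t_{++}^{-}t_{-}^{++}=[2]\mathbf 1_{-}$ give $P_{{+}^2}^{{+}^2}t_{-}^{++}=0$. If $b\geq 1$ the rightmost strand is an identity strand matching the $\otimes\mathbf 1_{+}$, so the common factor equals $\bigl(P_{{+}^{k-1}}^{{+}^{k-1}}(\mathbf 1_{{+}^a}\otimes t_{-}^{++}\otimes\mathbf 1_{{+}^{b-1}})\bigr)\otimes\mathbf 1_{+}$, which vanishes by the inductive hypothesis~(2) at level $k-1$ (note $a+(b-1)+2=k-1$); both terms of the recursion then vanish.

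The remaining case $b=0$ is the main obstacle. Now $t_{-}^{++}$ straddles the $(k-1)$-st strand of the clasp and the distinguished $k$-th strand, and the easy factorization fails. Writing $Z=(P_{{+}^{k-1}}^{{+}^{k-1}}\otimes\mathbf 1_{+})(\mathbf 1_{{+}^{k-2}}\otimes t_{-}^{++})$, the claim reduces to $Z=\tfrac{[k-1]}{[k]}NZ$. I would compute $NZ$ diagrammatically: the trivalent vertex supplied by $t_{-}^{++}$ meets the right-hand gadget of $N$, which I reduce using the $2$-gon and $4$-gon relations of Definition~\ref{singledef}, while the inductive absorption property~(1) at level $k-1$ lets the top clasp swallow the smaller clasp thereby created; this collapses $NZ$ to a scalar multiple of $Z$. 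The proof is then finished by checking that this scalar equals $[k]/[k-1]$, a Chebyshev-type identity among quantum integers (reducible to $[2][k-1]=[k]+[k-2]$). Pinning down the gadget reduction and this coefficient is where the real diagrammatic work lies; everything else is bookkeeping driven by Lemma~\ref{decomp} and the inductive hypotheses.
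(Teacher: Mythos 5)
The paper offers no proof of this proposition at all---it is imported from Kuperberg and Ohtsuki--Yamada by citation---so your argument can only be checked against the cited literature and against the computations the paper performs elsewhere. Your overall strategy is the standard one and is essentially sound: reducing the mirror-image and opposite-sign statements to a single equality via the $*$-involution and arrow reversal works (though note that arrow reversal sends $P_{{+}^k}^{{+}^k}$ to $P_{{-}^k}^{{-}^k}$ and $t_{-}^{++}$ to $t_{+}^{--}$, not to $t_{++}^{-}$; only the $*$-operation fixes the clasp and swaps $t_{-}^{++}$ with $t_{++}^{-}$); the induction for part~(2) is correctly organized; the base case and the $b\geq 1$ case are right; and in the hard case $b=0$ the scalar you predict, $[2]-\tfrac{[k-2]}{[k-1]}=\tfrac{[k]}{[k-1]}$ via $[2][k-1]=[k]+[k-2]$, is exactly the computation the paper itself carries out in display~(\ref{p2CK2}) inside the proof of Proposition~\ref{pfCK2}, read upside down. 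So that step, while only sketched, does go through, and your use of the inductive hypotheses (1) and (2) at level $k-1$ there is legitimate.

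The genuine gap is in your derivation of (1) from (2): you expand $P_{{+}^l}^{{+}^l}=\mathbf{1}_{{+}^l}+\sum_i c_i w_i$, i.e.\ you assume the identity web occurs with coefficient exactly $1$. Nothing in your argument establishes this, and the justification the paper uses for the analogous claim (in the proof of Proposition~\ref{singleunique}) is idempotency of the clasp combined with part~(2)---which is circular here, since idempotency is precisely the instance $l=k$, $a=b=0$ of the statement (1) you are trying to derive. If the identity coefficient were some $c\neq 1$, your computation would only yield $P_{{+}^k}^{{+}^k}(\mathbf{1}_{{+}^a}\otimes P_{{+}^l}^{{+}^l}\otimes\mathbf{1}_{{+}^b})=c\,P_{{+}^k}^{{+}^k}$. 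You need either an independent proof that $c=1$ (for instance a cut-path or through-degree argument showing that the correction term in Definition~\ref{singledef}, which factors through the object ${+}^{k-2}{-}$, cannot contribute to the identity web after reduction), or a restructuring that avoids the basis expansion entirely: prove (1) at level $k$ by induction on $l$ directly from the recursion, observing that the first term $P_{{+}^{l-1}}^{{+}^{l-1}}\otimes\mathbf{1}_{+}$ is absorbed by the inductive case $l-1$, while the correction term, after that same absorption, presents a $t_{-}^{++}$ to $P_{{+}^k}^{{+}^k}$ and is annihilated by (2). With that repair the proof is complete.
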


\begin{PROP}[Uniqueness]\label{singleunique}
If a non-trivial element $T\in\TL({\epsilon}^k,{\epsilon}^k)$ satisfies $T^2=T$ and Proposition~\ref{singleproj}~(2), then $T=P_{{\epsilon}^k}^{{\epsilon}^k}$ for $\epsilon\in\{+,-\}$. 
\end{PROP}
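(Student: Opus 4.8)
The plan is to run the standard Jones--Wenzl uniqueness argument adapted to the $A_2$ setting: from the hypotheses on $T$ I derive two \emph{absorption identities}, and then I compute a single product in two ways to force $T=P_{\epsilon^k}^{\epsilon^k}$. I treat $\epsilon=+$; the case $\epsilon=-$ is verbatim the same with all signs reversed. For $X\in\TL(+^k,+^k)$ let $\langle X\rangle$ denote the coefficient of $\mathbf 1_{+^k}$ when $X$ is expanded in the basis $B(\bar\epsilon_k,\epsilon_k)$ of $A_2$ basis webs.

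First I would prove that $TX=\langle X\rangle\,T$ and $XT=\langle X\rangle\,T$ for every $X\in\TL(+^k,+^k)$. Expanding $X=\sum_D x_D\,D$ over basis webs, the term $D=\mathbf 1_{+^k}$ contributes $T\mathbf 1_{+^k}=T$, so it suffices to show $TD=0$ for every basis web $D\neq\mathbf 1_{+^k}$. By Lemma~\ref{decomp} such a $D$ carries one of $t_-^{++},t_+^{--},b^{-+},b^{+-}$ along its top side; since the object $+^k$ carries no $-$ strand, the pieces $t_+^{--},b^{-+},b^{+-}$ (each of which needs a $-$ boundary leg) cannot occur, leaving only $t_-^{++}$, whose two boundary legs are $+$. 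Hence $D$ factors as a composition $D=(\mathbf 1_{+^a}\otimes t_-^{++}\otimes\mathbf 1_{+^b})\circ D'$ with $a+b+2=k$, and Proposition~\ref{singleproj}(2) applied to $T$ gives $TD=\bigl(T(\mathbf 1_{+^a}\otimes t_-^{++}\otimes\mathbf 1_{+^b})\bigr)D'=0$. Summing over $D$ yields $TX=\langle X\rangle\,T$. The identity $XT=\langle X\rangle\,T$ follows symmetrically: a non-identity $D$ now factors through its bottom turnback, which for the object $+^k$ can only be $t_{++}^-$ (the caps $d_{-+},d_{+-}$ and the vertex $t_{--}^+$ all require a $-$ strand), and the right-hand equality of Proposition~\ref{singleproj}(2) kills $DT$. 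Because $P_{+^k}^{+^k}$ also satisfies Proposition~\ref{singleproj}(2), the identical reasoning gives $P_{+^k}^{+^k}X=\langle X\rangle\,P_{+^k}^{+^k}$ for all $X$.

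Next I would pin down the two identity-coefficients needed at the end. Taking $X=T$ in the first absorption identity gives $T^2=\langle T\rangle\,T$; since $T^2=T$ and $T$ is non-trivial, this forces $\langle T\rangle=1$. Separately, an induction on $k$ using the recursive Definition~\ref{singledef} shows $\langle P_{+^k}^{+^k}\rangle=1$: the leading term $P_{+^{k-1}}^{+^{k-1}}\otimes\mathbf 1_{+}$ contributes $\mathbf 1_{+^k}$ with coefficient $1$ by the inductive hypothesis, while the correction term $-\tfrac{[k-1]}{[k]}(\cdots)$ contains a trivalent vertex and so contributes nothing to the coefficient of $\mathbf 1_{+^k}$ (the base case $k=1$ being $P_{+}^{+}=\mathbf 1_{+}$).

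Finally I would evaluate $P_{+^k}^{+^k}\,T$ in two ways. The absorption identity for $P_{+^k}^{+^k}$ with $X=T$ gives $P_{+^k}^{+^k}T=\langle T\rangle\,P_{+^k}^{+^k}=P_{+^k}^{+^k}$, while the absorption identity for $T$ with $X=P_{+^k}^{+^k}$ gives $P_{+^k}^{+^k}T=\langle P_{+^k}^{+^k}\rangle\,T=T$. Comparing, $T=P_{+^k}^{+^k}$, which is the assertion. The main obstacle is the absorption step, and within it the orientation bookkeeping that collapses the four boundary pieces allowed by Lemma~\ref{decomp} down to the single turnback annihilated by Proposition~\ref{singleproj}(2): one must verify that on the monochromatic object $+^k$ no cup and no oppositely oriented trivalent vertex can attach, since their boundary legs would be forced to disagree with the constant sign of the object. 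Once that check is carried out, the remainder is purely formal.
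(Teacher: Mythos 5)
Your proposal is essentially the paper's own argument: expand both $T$ and $P_{+^k}^{+^k}$ in the basis of $A_2$ webs, observe via Lemma~\ref{decomp} that on the monochromatic object every non-identity basis web factors through $t_{-}^{++}$ on top and $t_{++}^{-}$ on the bottom and is therefore annihilated, conclude that both elements have identity-coefficient $1$, and evaluate $P_{+^k}^{+^k}T$ in two ways; your explicit check that the cups and the oppositely oriented vertices cannot attach to $+^k$ is a point the paper leaves implicit, and is welcome. The one sub-step I would not accept as written is your induction showing $\langle P_{+^k}^{+^k}\rangle=1$: the justification that the correction term in Definition~\ref{singledef} ``contains a trivalent vertex and so contributes nothing to the coefficient of $\mathbf 1_{+^k}$'' is not valid, because the reduction relations of $\TL$ can eliminate trivalent vertices during the expansion into basis webs (for instance the bigon relation $t_{++}^{-}\circ t_{-}^{++}=[2]\,\mathbf 1_{-}$ removes a pair of them), so the presence of vertices before reduction does not by itself exclude an identity contribution. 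The fix is immediate and is what the paper does: $P_{+^k}^{+^k}$ is itself a non-trivial idempotent satisfying Proposition~\ref{singleproj}~(2), so the very argument you used to get $\langle T\rangle=1$ from $T^2=\langle T\rangle T$ applies verbatim to give $\langle P_{+^k}^{+^k}\rangle=1$, and the rest of your proof goes through unchanged.
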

\begin{proof}
$P_{{+}^k}^{{+}^k}$ can be expanded as a linear sum of $A_2$ basis webs $P_{{+}^k}^{{+}^k}=c\mathbf{1}_{{+}^k}+x$ by Lemma~{\ref{decomp}} where $c$ is a constant and $x$ have $t_{-}^{++}$ and $t_{++}^{-}$ in the top side and in the bottom side.
We can see that $c=1$ from $(P_{{+}^k}^{{+}^k})^2=P_{{+}^k}^{{+}^k}$ and Proposition~\ref{singleproj}~(2). 
In the same way, we know $T=\mathbf{1}_{{+}^k}+x'$.
Therefore, 
\begin{align*}
P_{{+}^k}^{{+}^k}T&=(\mathbf{1}_{{+}^k}+x)T=T\\
P_{{+}^k}^{{+}^k}T&=P_{{+}^k}^{{+}^k}(\mathbf{1}_{{+}^k}+x')=P_{{+}^k}^{{+}^k}
\end{align*}
We can prove it for $\epsilon={-}$ in the same way.
\end{proof}

For an $A_2$ basis web $w\in B(\bar{\epsilon}_{k},\epsilon_{l})$, we define $w^*\in B(\bar{\epsilon}_{l},\epsilon_{k})$ as the reflection of $w$ through the horizontal line $[0,1]\times\{1/2\}$ with the opposite direcion.
For the coefficient $\mathbb{C}(q^\frac{1}{6})$, the star operator acts on $\mathbb{C}$ by complex conjugate and $(q^{\frac{1}{6}})^*=q^{-\frac{1}{6}}$. 
In this way, we define a linear map ${}^*\colon \TL(\epsilon_k,\epsilon_l)\to\TL(\epsilon_{l},\epsilon_{k})$.

\begin{RMK}\label{singlerem}
Proposition~\ref{singleunique} implies $(P_{{+}^k}^{{+}^k})^*=P_{{+}^k}^{{+}^k}$ and $(P_{{-}^k}^{{-}^k})^*=P_{{-}^k}^{{-}^k}$.
\end{RMK}

We introduce the $A_2$ clasp $P_{{+}^k{-}^l}^{{+}^k{-}^l}$ in $\TL({+}^k{-}^l,{+}^k{-}^l)$ based on~\cite{OhtsukiYamada97}.
\begin{DEF}[The $A_2$ clasp in $\TL({+}^k{-}^l,{+}^k{-}^l)$]\label{OYdef}
\[P_{{+}^k{-}^l}^{{+}^k{-}^l}
=
\tikz[baseline=-.6ex, scale=.1]{
\draw[->-=.2, ->-=.8] (-2,-5) -- (-2,5);
\draw[-<-=.2, -<-=.8] (2,-5) -- (2,5);
\draw[fill=white] (-4,-1) rectangle (4,1);
\node at (-2,5) [left]{${\scriptstyle k}$};
\node at (-2,-5) [left]{${\scriptstyle k}$};
\node at (2,5) [right]{${\scriptstyle l}$};
\node at (2,-5) [right]{${\scriptstyle l}$};
}\,
=
\sum_{i=0}^{\min\{k,l\}}
(-1)^i
\frac{{k\brack i}{l\brack i}}{{k+l+1\brack i}}
\,\tikz[baseline=-.6ex, scale=.1]{
\draw[->-=.5] (-4,5) -- (-4,8);
\draw[-<-=.5] (4,5) -- (4,8);
\draw[-<-=.5, yscale=-1] (-4,5) -- (-4,8);
\draw[->-=.5, yscale=-1] (4,5) -- (4,8);
\draw[->-=.5] (-5,-4) -- (-5,4);
\draw[-<-=.5] (5,-4) -- (5,4);
\draw[-<-=.5] (-3,4) to[out=south, in=south] (3,4);
\draw[->-=.5, yscale=-1] (-3,4) to[out=south, in=south] (3,4);
\draw[fill=white] (-6,4) rectangle (-2,5);
\draw[fill=white, xscale=-1] (-6,4) rectangle (-2,5);
\draw[fill=white, yscale=-1] (-6,4) rectangle (-2,5);
\draw[fill=white, xscale=-1, yscale=-1] (-6,4) rectangle (-2,5);
\node at (-5,0) [left]{${\scriptstyle k-i}$};
\node at (5,0) [right]{${\scriptstyle l-i}$};
\node at (0,4) {${\scriptstyle i}$};
\node at (0,-4) {${\scriptstyle i}$};
\node at (-4,7) [left]{${\scriptstyle k}$};
\node at (4,7) [right]{${\scriptstyle l}$};
\node at (-4,-7) [left]{${\scriptstyle k}$};
\node at (4,-7) [right]{${\scriptstyle l}$};
}\, .\]
$P_{{-}^k{+}^l}^{{-}^k{+}^l}$ is also defined by the same way.
\end{DEF}

One can prove a similar statement to Proposition~\ref{singleproj}.

\begin{PROP}[Kuperberg~\cite{Kuperberg96}, Ohtsuki and Yamada~\cite{OhtsukiYamada97}]\label{doubleproj} 
Let $k$ and $l$ be non-negative integers, then
\begin{enumerate}
\item $P_{{+}^k{-}^l}^{{+}^k{-}^l}
(\mathbf{1}_{{+}^a}\otimes P_{{+}^s{-}^t}^{{+}^s{-}^t}\otimes\mathbf{1}_{{-}^b})
=P_{{+}^k{-}^l}^{{+}^k{-}^l}
=(\mathbf{1}_{{+}^a}\otimes P_{{+}^s{-}^t}^{{+}^s{-}^t}\otimes\mathbf{1}_{{-}^b})
P_{{+}^k{-}^l}^{{+}^k{-}^l}$ for $a+s=k$ and $b+t=l$,
\item $P_{{+}^k{-}^l}^{{+}^k{-}^l}(\mathbf{1}_{{+}^a}\otimes b^{+-}\otimes\mathbf{1}_{{-}^b})=0=(\mathbf{1}_{{+}^a}\otimes d_{+-}\otimes \mathbf{1}_{{-}^b})P_{{+}^k{-}^l}^{{+}^k{-}^l}$ for $a+1=k$ and $b+1=l$.
\end{enumerate}
The same equalities hold for $P_{{-}^k{+}^l}^{{-}^k{+}^l}$. 
\end{PROP}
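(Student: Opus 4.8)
The plan is to prove the turnback-killing property~(2) first, by a direct skein computation from Definition~\ref{OYdef}, and then to deduce the absorption property~(1) from~(2) together with Lemma~\ref{decomp} and Proposition~\ref{singleproj}. Two preliminary observations organize the work. First, the explicit expansion in Definition~\ref{OYdef} is built top--bottom symmetrically out of the single clasps $P_{{+}^k}^{{+}^k}$ and $P_{{-}^l}^{{-}^l}$, and its coefficients $c_i=(-1)^i{k\brack i}{l\brack i}/{k+l+1\brack i}$ are fixed by the star operator (each $[n]$ is); hence $(P_{{+}^k{-}^l}^{{+}^k{-}^l})^*=P_{{+}^k{-}^l}^{{+}^k{-}^l}$, exactly as in Remark~\ref{singlerem}. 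Since $(b^{+-})^*=d_{+-}$, applying $*$ converts each ``left'' identity into its ``right'' counterpart, so in each of~(1) and~(2) I only need to treat one side. Second, each summand $D_i$ of $P_{{+}^k{-}^l}^{{+}^k{-}^l}$ carries $P_{{+}^k}^{{+}^k}$ (resp.\ $P_{{-}^l}^{{-}^l}$) as its bottom block on the ${+}$ (resp.\ ${-}$) strands, so by Proposition~\ref{singleproj}(2) the full clasp annihilates every same-sign turnback $t_{-}^{++}$ or $t_{+}^{--}$ placed beneath it, at any position.

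For~(2) I would substitute $P_{{+}^k{-}^l}^{{+}^k{-}^l}=\sum_i c_i D_i$ and compose each $D_i$ with $\mathbf 1_{{+}^a}\otimes b^{+-}\otimes\mathbf 1_{{-}^b}$ (with $a=k-1$, $b=l-1$), which attaches a cup beneath the $k$-th ${+}$ strand and the first ${-}$ strand, i.e.\ beneath the innermost pair fed into the bottom $i$-cup. Resolving this cup means sliding it into the bottom single clasps: using Proposition~\ref{singleproj} to absorb and re-cap one strand of $P_{{+}^k}^{{+}^k}$ and of $P_{{-}^l}^{{-}^l}$, together with the bigon relation $[2]$ and the circle relation $[3]$, each capped $D_i$ rewrites as a linear combination of diagrams of the standard shapes of $D_{i-1}$ and $D_i$, with coefficients that are quantum integers of the form $[k-i+1]$, $[l-i+1]$, and $[k+l+2-i]$. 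The statement then reduces to the vanishing of an alternating sum, and the adjacent terms cancel in pairs precisely because $c_i/c_{i-1}=-[k-i+1][l-i+1]/([i][k+l+2-i])$ is the ratio produced by the capping. I expect this coefficient bookkeeping to be the main obstacle: one must first extract the auxiliary single-clasp partial-closure identities (how $P_{{+}^k}^{{+}^k}$ behaves when one boundary strand is capped against an external arc), which themselves follow by induction from the recursion in Definition~\ref{singledef}, and then match the extracted quantum integers against $c_i/c_{i-1}$ term by term, including the boundary terms $i=0$ and $i=\min\{k,l\}$.

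To deduce~(1) I would expand the inner clasp into basis webs, writing $\mathbf 1_{{+}^a}\otimes P_{{+}^s{-}^t}^{{+}^s{-}^t}\otimes\mathbf 1_{{-}^b}=\mathbf 1_{{+}^k{-}^l}+\sum_w c_w(\mathbf 1_{{+}^a}\otimes w\otimes\mathbf 1_{{-}^b})$, where $w$ runs over the non-identity $A_2$ basis webs of $\TL({+}^s{-}^t,{+}^s{-}^t)$. By Lemma~\ref{decomp} every such $w$ carries one of $t_{-}^{++}$, $t_{+}^{--}$, or $b^{+-}$ on its top side; these sit respectively among the ${+}$ strands of the big clasp, among its ${-}$ strands, or exactly at its central ${+}/{-}$ interface (position $k,k+1$), because within the block ${+}^s{-}^t$ the only ${+}/{-}$ adjacency is at $s,s+1$. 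Composing with $P_{{+}^k{-}^l}^{{+}^k{-}^l}$ from above, the first two kinds of turnback vanish by the second observation (Proposition~\ref{singleproj}(2)), and the interface turnback vanishes by part~(2) just proved. Hence only the identity summand survives, giving the left identity in~(1); the right identity follows by the star operator, and the arguments for $P_{{-}^k{+}^l}^{{-}^k{+}^l}$ are identical with the roles of the signs exchanged.
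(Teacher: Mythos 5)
The paper does not actually prove Proposition~\ref{doubleproj}: it is quoted from Kuperberg and Ohtsuki--Yamada, with only the remark that one can argue as for Proposition~\ref{singleproj} (itself also cited without proof). So your proposal must stand on its own. Its architecture is sound, and your reduction of (1) to (2) is complete and correct: the $i=0$ term of Definition~\ref{OYdef} shows the inner clasp expands as $\mathbf{1}_{{+}^s{-}^t}$ plus non-identity basis webs; Lemma~\ref{decomp} puts a $t_{-}^{{+}{+}}$, $t_{+}^{{-}{-}}$ or $b^{{+}{-}}$ on top of each of the latter; the first two are killed because every summand of the outer clasp carries $P_{{+}^k}^{{+}^k}\otimes P_{{-}^l}^{{-}^l}$ as its bottom block (Proposition~\ref{singleproj}(2)); and the interface cap is killed by part (2). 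Your use of the star operator to obtain the right-hand identities is also correct, and deriving $(P_{{+}^k{-}^l}^{{+}^k{-}^l})^{*}=P_{{+}^k{-}^l}^{{+}^k{-}^l}$ directly from the symmetry of the explicit formula, rather than from Remark~\ref{doublerem} (which logically comes after this proposition), avoids a circularity.

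The gap is in part (2), which is where essentially all the content lies. You correctly compute $c_i/c_{i-1}=-[k-i+1][l-i+1]/([i][k+l+2-i])$ and correctly predict that the proof is a telescoping cancellation, but the two facts that make the telescope close --- the partial-closure identities describing how $P_{{+}^k}^{{+}^k}\otimes P_{{-}^l}^{{-}^l}$ absorbs the cup $b^{{+}{-}}$ at the interface once the $i$ turnback strands above are taken into account, and the assertion that each capped $D_i$ reduces to a combination of exactly two standard shapes with the quantum integers $[k-i+1]$, $[l-i+1]$, $[i]$, $[k+l+2-i]$ appearing in the right places --- are stated as expectations, not established. Since the coefficients in Definition~\ref{OYdef} are precisely engineered so that this cancellation occurs, asserting the cancellation without the computation comes close to assuming the conclusion. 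As written this is an outline of the Ohtsuki--Yamada argument rather than a proof; to complete it you would need to prove the single-clasp partial-closure lemma (by induction on Definition~\ref{singledef}, as you indicate, or via a closed-form expansion in the spirit of Lemma~\ref{singleexp}) and then exhibit the two-term recurrence explicitly, including the boundary cases $i=0$ and $i=\min\{k,l\}$.
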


One can prove the uniqueness of $P_{{+}^k{-}^l}^{{+}^k{-}^l}$ in a similar way to Proposition~\ref{singleunique}.

\begin{PROP}[Uniqueness]\label{doubleunique}
A non-trivial idempotent element in $\TL({+}^k{-}^l,{+}^k{-}^l)$ satisfying Proposition~\ref{doubleproj}~(2) is uniquely determined.
\end{PROP}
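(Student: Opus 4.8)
The plan is to mirror the proof of Proposition~\ref{singleunique}, the only genuinely new feature being that the colouring ${+}^k{-}^l$ admits two \emph{different} kinds of elementary turnback, whereas ${+}^k$ admits only one. Write $P:=P_{{+}^k{-}^l}^{{+}^k{-}^l}$. First I would record that $P$ is annihilated, from both the top and the bottom, by every elementary turnback allowed by Lemma~\ref{decomp}. For the junction cap $b^{{+}{-}}$ and cup $d_{{+}{-}}$ this is exactly Proposition~\ref{doubleproj}~(2). For an internal monochromatic merge $t_{{-}}^{{+}{+}}$ inside the ${+}$-block I would factor it through the single clasp: by the absorption Proposition~\ref{doubleproj}~(1) one has $P=P\,(P_{{+}^k}^{{+}^k}\otimes\mathbf{1}_{{-}^l})$, and since $(P_{{+}^k}^{{+}^k}\otimes\mathbf{1}_{{-}^l})(\mathbf{1}_{{+}^a}\otimes t_{{-}}^{{+}{+}}\otimes\mathbf{1}_{{+}^b})=\bigl(P_{{+}^k}^{{+}^k}(\mathbf{1}_{{+}^a}\otimes t_{{-}}^{{+}{+}}\otimes\mathbf{1}_{{+}^b})\bigr)\otimes\mathbf{1}_{{-}^l}=0$ by Proposition~\ref{singleproj}~(2), the merge is killed; the merge $t_{{+}}^{{-}{-}}$ inside the ${-}$-block is killed symmetrically. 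Thus $Pw=wP=0$ for every basis web $w\neq\mathbf{1}_{{+}^k{-}^l}$, because each such $w$ carries a turnback on each side by Lemma~\ref{decomp}.

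Next, expanding $T=c\,\mathbf{1}_{{+}^k{-}^l}+x$ in the basis $B(\overline{{+}^k{-}^l},{+}^k{-}^l)$, the previous step gives $PT=cP$ and $TP=cP$, where $c$ is the coefficient of the identity web in $T$. To finish as in Proposition~\ref{singleunique} I must upgrade this to $T=P$, and for that I need the symmetric statement that $T$ itself annihilates \emph{all} the turnbacks, not merely the junction one supplied by Proposition~\ref{doubleproj}~(2). Here is the crux: I would show that a non-trivial property-(2) idempotent $T$ is forced to absorb the single clasps, i.e.\ $(P_{{+}^k}^{{+}^k}\otimes P_{{-}^l}^{{-}^l})\,T=T=T\,(P_{{+}^k}^{{+}^k}\otimes P_{{-}^l}^{{-}^l})$. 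The intended mechanism is to apply the \emph{single}-clasp uniqueness (Proposition~\ref{singleunique}) block by block: the monochromatic part of $T$ in the ${+}$-block (resp.\ the ${-}$-block) is a non-trivial idempotent annihilated by $t_{{-}}^{{+}{+}}$ (resp.\ $t_{{+}}^{{-}{-}}$), hence must coincide with $P_{{+}^k}^{{+}^k}$ (resp.\ $P_{{-}^l}^{{-}^l}$). Once this absorption is established, Proposition~\ref{singleproj}~(2) shows that $T$ is annihilated by $t_{{-}}^{{+}{+}}$ and $t_{{+}}^{{-}{-}}$ as well.

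With $T$ annihilated by every elementary turnback on both sides, the computation of Proposition~\ref{singleunique} goes through verbatim: writing $P=\mathbf{1}_{{+}^k{-}^l}+x$ and $T=\mathbf{1}_{{+}^k{-}^l}+x'$ with $x,x'$ supported on turnback webs, every cross term is killed, so $PT=(\mathbf{1}_{{+}^k{-}^l}+x)T=T$ and simultaneously $PT=P(\mathbf{1}_{{+}^k{-}^l}+x')=P$, whence $T=P$; the coefficient $c=1$ drops out of the same identities together with $T\neq 0$.

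I expect the main obstacle to be precisely the absorption step of the second paragraph. In Proposition~\ref{singleunique} there is a single turnback type, which Proposition~\ref{singleproj}~(2) annihilates outright, so no reduction is needed; for the two-coloured clasp, property~(2) annihilates only the junction cap, and the internal monochromatic merges must be dealt with separately. Making the blockwise application of Proposition~\ref{singleunique} rigorous---in particular isolating the ``block-diagonal'' part of $T$ and checking that it is a non-trivial idempotent satisfying the single-clasp hypothesis---is the delicate point, and is what replaces the one-line coefficient argument of the single-clasp case.
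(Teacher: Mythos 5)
Your first and third paragraphs are exactly the paper's intended argument, which is the verbatim computation of Proposition~\ref{singleunique}: expand $P=\mathbf{1}_{{+}^k{-}^l}+x$ and $T=\mathbf{1}_{{+}^k{-}^l}+x'$ (the identity coefficient being $1$ by idempotency and the annihilation conditions), and evaluate $PT$ two ways. The paper itself says nothing beyond ``in the same way as Proposition~\ref{singleunique}'', so everything rests on the point you isolate: by Lemma~\ref{decomp} a non-identity basis web of $\TL({+}^k{-}^l,{+}^k{-}^l)$ may carry $t_{{+}{+}}^{{-}}$ or $t_{{-}{-}}^{{+}}$ as well as $d_{{+}{-}}$ on its bottom edge, and the cross terms $xT$, $Px'$ die only if $T$ kills all of these, whereas the quoted hypothesis supplies only the junction pair $b^{{+}{-}}$, $d_{{+}{-}}$. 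Your derivation that $P$ itself kills the monochromatic merges (via Proposition~\ref{doubleproj}~(1) combined with Proposition~\ref{singleproj}~(2)) is correct.

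The gap is in your second paragraph: the proposed ``absorption step'' cannot be carried out. You give no reason why the block-diagonal part of $T$ should be annihilated by $t_{{-}}^{{+}{+}}$ --- that is precisely what is to be proved --- and no such deduction from the junction condition alone is possible, because the conclusion fails under the literal hypothesis. For instance, for $k\geq 2$ and $l\geq 1$ the element $T=\mathbf{1}_{{+}^{k-1}}\otimes P_{{+}{-}}^{{+}{-}}\otimes\mathbf{1}_{{-}^{l-1}}$ is a non-trivial idempotent with $T(\mathbf{1}_{{+}^{k-1}}\otimes b^{{+}{-}}\otimes\mathbf{1}_{{-}^{l-1}})=0=(\mathbf{1}_{{+}^{k-1}}\otimes d_{{+}{-}}\otimes\mathbf{1}_{{-}^{l-1}})T$, yet it is not $P_{{+}^k{-}^l}^{{+}^k{-}^l}$ since it does not annihilate $\mathbf{1}_{{+}^{k-2}}\otimes t_{{-}}^{{+}{+}}\otimes\mathbf{1}_{{+}{-}^l}$. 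The statement must therefore be read with the stronger hypothesis that $T$ annihilates every turnback occurring in Lemma~\ref{decomp} (equivalently, that $T$ satisfies the conditions of Proposition~\ref{doubleproj}~(1) and (2), in the spirit of Proposition~\ref{eeprop}~(2) and Proposition~\ref{multiprop}); this is the form in which the proposition is actually used, e.g.\ to conclude $(P_{{+}^k{-}^l}^{{+}^k{-}^l})^{*}=P_{{+}^k{-}^l}^{{+}^k{-}^l}$. Under that reading your second paragraph is unnecessary and your remaining two paragraphs already constitute the proof.
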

\begin{proof}
In the same way as the proof of Proposition~\ref{singleunique}.
\end{proof}

\begin{RMK}\label{doublerem}
Proposition~\ref{doubleunique} implies $(P_{{+}^k{-}^l}^{{+}^k{-}^l})^*=P_{{+}^k{{-}^l}}^{{+}^k{-}^l}$ and $(P_{{-}^k{+}^l}^{{-}^k{+}^l})^*=P_{{-}^k{+}^l}^{{-}^k{+}^l}$.
\end{RMK}

We give an explicit definition of a general form of the $A_2$ clasp appear in Kuperberg~\cite{Kuperberg96} and Kim~\cite{Kim07}. 
This $A_2$ clasp is an $A_2$ web, no longer idempotent, in $\TL(\epsilon,\delta)$ such that $k=\#\epsilon^{-1}({+})=\#{\delta}^{-1}({+})$ and $l=\#\epsilon^{-1}({-})=\#{\delta}^{-1}({-})$.
We introduce the following $A_2$ basis webs:

\begin{equation}\label{H}
H_{{+}{-}}^{{-}{+}}
=
\,\tikz[baseline=-.6ex, scale=.1]{
\draw[->-=.2,-<-=.8] (0,-3) -- (0,3);
\draw[-<-=.2,->-=.8] (4,-3) -- (4,3);
\draw[-<-=.5] (0,0) -- (4,0);
}\,
,
\quad H_{{-}{+}}^{{+}{-}}
=
\,\tikz[baseline=-.6ex, scale=.1]{
\draw[-<-=.2,->-=.8] (0,-3) -- (0,3);
\draw[->-=.2,-<-=.8] (4,-3) -- (4,3);
\draw[->-=.5] (0,0) -- (4,0);
}\, .
\end{equation}

Let $k$ and $l$ be non-negative integers. 
We take an arbitrary object $\epsilon\colon\underline{k+l}\to\{\pm\}$ in $\TL$ satisfying $k=\#\epsilon^{-1}(+)$ and $l=\#\epsilon^{-1}(-)$. 
Then, we define an $A_2$ basis web $\sigma_{{+}^k{-}^l}^{\epsilon}$ as follows.
We consider the disk $D=[0,1]\times[0,1]$ with marked points signed by ${+}^k{-}^l$ and $\epsilon$.
Join the marked points labeled by ${+}$ in the bottom side with ones of the upper side by straight arcs.
In the same way, join the mark points labeled by ${-}$ by straight arcs. 
Then, one can obtain the $A_2$ basis web $\sigma_{{+}^k{-}^l}^{\epsilon}$ by replacing all crossing points by $H_{{+}{-}}^{{-}{+}}$.

\begin{DEF}
Let $k$ and $l$ be non-negative integers. 
Then, an $A_2$ clasp $P_{{+}^k{-}^l}^{\epsilon}$ in $\TL({{+}^k{-}^l},{\epsilon})$ is defined by
\[
 P_{{+}^k{-}^l}^{\epsilon}=\sigma_{{+}^k{-}^l}^{\epsilon}P_{{+}^k{-}^l}^{{+}^k{-}^l}.
\]
\end{DEF}

\begin{PROP}
Compositions of $P_{{+}^k{-}^l}^{\epsilon}$ with $A_2$ basis webs $t$ and $d$ vanish.
\end{PROP}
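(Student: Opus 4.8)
The plan is to reduce the statement to the vanishing properties already available for the idempotent $P_{{+}^k{-}^l}^{{+}^k{-}^l}$ via the defining factorization $P_{{+}^k{-}^l}^{\epsilon}=\sigma_{{+}^k{-}^l}^{\epsilon}P_{{+}^k{-}^l}^{{+}^k{-}^l}$. The webs $t$ and $d$ of the statement are the trivalent vertices $t_{--}^{+},t_{++}^{-}$ and the caps $d_{-+},d_{+-}$, glued to two adjacent strands of the top boundary $\epsilon$. Since $\sigma_{{+}^k{-}^l}^{\epsilon}$ occupies this top layer, such a turnback does not meet $P_{{+}^k{-}^l}^{{+}^k{-}^l}$ directly, so the strategy is to push it down through $\sigma_{{+}^k{-}^l}^{\epsilon}$ until it reaches the bottom boundary ${+}^k{-}^l$, where the idempotent annihilates it. It is therefore convenient to first dispose of the analogous turnbacks on the ${+}^k{-}^l$-boundary, which are exactly the targets of this propagation.

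For the ${+}^k{-}^l$-boundary, the bottom of $P_{{+}^k{-}^l}^{\epsilon}$ is literally the bottom of $P_{{+}^k{-}^l}^{{+}^k{-}^l}$, so for any admissible turnback $X$ one has $P_{{+}^k{-}^l}^{\epsilon}X=\sigma_{{+}^k{-}^l}^{\epsilon}\bigl(P_{{+}^k{-}^l}^{{+}^k{-}^l}X\bigr)$ by associativity. Because the diagrams are bipartite and oriented, the only admissible turnbacks here are the trivalent vertices $t_{-}^{++}$ and $t_{+}^{--}$ inside the ${+}$- and ${-}$-blocks and the cup $b^{+-}$ at the single junction. The junction case is Proposition~\ref{doubleproj}~(2). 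For a trivalent vertex in the ${+}$-block I would split off the single-sign clasp by Proposition~\ref{doubleproj}~(1) with $(s,t)=(k,0)$, writing $P_{{+}^k{-}^l}^{{+}^k{-}^l}=P_{{+}^k{-}^l}^{{+}^k{-}^l}\bigl(P_{{+}^k}^{{+}^k}\otimes\mathbf{1}_{{-}^l}\bigr)$, and then apply Proposition~\ref{singleproj}~(2); the ${-}$-block is identical with $P_{{-}^l}^{{-}^l}$. Hence $P_{{+}^k{-}^l}^{{+}^k{-}^l}X=0$, and so $P_{{+}^k{-}^l}^{\epsilon}X=0$.

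For the top boundary, I would argue by induction on the number of $H$-webs in $\sigma_{{+}^k{-}^l}^{\epsilon}$, equivalently on the number of inversions needed to bring $\epsilon$ to ${+}^k{-}^l$. In the base case $\sigma_{{+}^k{-}^l}^{\epsilon}=\mathbf{1}_{{+}^k{-}^l}$ there is nothing to slide and a $t$- or $d$-turnback is precisely a turnback of the first kind. In the inductive step, a cap $d_{-+},d_{+-}$ or a vertex $t_{--}^{+},t_{++}^{-}$ glued to two adjacent outgoing strands of $\sigma_{{+}^k{-}^l}^{\epsilon}$ meets the topmost $H_{{+}{-}}^{{-}{+}}$ or $H_{{-}{+}}^{{+}{-}}$ of \eqref{H}; using isotopy together with the $4$-, $2$-, and $0$-gon reductions, the turnback is carried past that one $H$-web, lowering the count, so that one is reduced to a clasp $\sigma'P_{{+}^k{-}^l}^{{+}^k{-}^l}$ with fewer $H$-webs composed with a turnback, to which the inductive hypothesis applies. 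Propagating all the way down reaches the layer $P_{{+}^k{-}^l}^{{+}^k{-}^l}$, where the turnback is killed by the second paragraph.

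I expect the sliding step to be the main obstacle: one must verify that capping or merging two adjacent top strands of an $H$-web always reduces, through a bigon or a square move, to a turnback positioned one $H$-web lower, and that this local reduction is compatible with the identity factors $\mathbf{1}_{{+}^a}$ and $\mathbf{1}_{{-}^b}$ carried alongside. The ${+}^k{-}^l$-boundary cases are routine once the single-sign clasp is split off, so the genuine work lies entirely in making the propagation through $\sigma_{{+}^k{-}^l}^{\epsilon}$ precise and confirming that it terminates at the idempotent.
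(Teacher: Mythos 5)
Your overall strategy coincides with the paper's: factor $P_{{+}^k{-}^l}^{\epsilon}=\sigma_{{+}^k{-}^l}^{\epsilon}P_{{+}^k{-}^l}^{{+}^k{-}^l}$ and induct on the number of $H$-webs in $\sigma_{{+}^k{-}^l}^{\epsilon}$, pushing the turnback down until Proposition~\ref{doubleproj} (together with Proposition~\ref{singleproj} inside the single-sign blocks) kills it. The gap is exactly at the point you flag as the ``main obstacle,'' and it is a real one: your claim that a cap or trivalent vertex on two adjacent top strands ``is carried past that one $H$-web, lowering the count'' by a local bigon or square move fails in the essential case. If the turnback sits entirely off the topmost $H$-web it commutes past it, and if a cap sits on both outputs of that $H$-web a bigon reduction applies; but when the turnback attaches to \emph{one} leg of the topmost $H_{{+}{-}}^{{-}{+}}$ and to an adjacent strand that passes straight through that layer, there is no local move available at the level of that single $H$-web --- the vertex or cap simply remains joined to the $H$-web by an edge. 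Note also that for $t_{{+}{+}}^{-}$ and $t_{{-}{-}}^{+}$ the two strands being merged have the same sign, and same-sign strands never cross in $\sigma_{{+}^k{-}^l}^{\epsilon}$, so no reordering of the wiring diagram can bring the turnback to sit directly on a single crossing either.

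The paper's resolution of this case uses a structural fact you have not invoked: since at the bottom boundary all ${-}$-points lie to the right of all ${+}$-points, the ${-}$-strand entering the topmost $H$-web from below must cross the adjacent up-pointing strand at a \emph{second} $H$-web, which one may take to lie immediately below. The turnback vertex, one vertex of the topmost $H$-web, and both vertices of that second $H$-web then bound a genuine $4$-gon, and applying the square relation to it consumes both $H$-webs at once, producing a sum of two diagrams each carrying a turnback (now possibly of the other type, a $t$ turning into a $d$ or vice versa) on a web $\sigma''$ with $h(\sigma'')=n-1$; the induction hypothesis then applies. So the induction parameter drops by two, not one, and the reduction is not ``past one $H$-web'' but through a square spanning two of them. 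Without identifying this second crossing and the resulting $4$-gon, your inductive step does not close. (Your first paragraph, on turnbacks attached to the ${+}^k{-}^l$-boundary, is correct but is not where this proposition's content lies; the paper handles those compositions via Propositions~\ref{doubleproj} and~\ref{multiprop}.)
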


\begin{proof}
We prove it by induction on the number $h(\sigma)$ of $H_{{+}{-}}^{{-}{+}}$ contained in $\sigma=\sigma_{{+}^k{-}^l}^{\epsilon}$.
If $h(\sigma)=0$, it is clear since $P_{{+}^k{-}^l}^{\epsilon}=P_{{+}^k{-}^l}^{{+}^k{-}^l}$ and Proposition~\ref{doubleproj}. 
If $h(\sigma)=1$, then $\sigma=\mathbf{1}_{{+}^{k-1}}\otimes H_{{+}{-}}^{{-}{+}}\mathbf{1}_{{-}^{l-1}}$. 
One can prove by easy calculations.
When $h(\sigma)=n+1$ $(n\geq 1)$, 
$\sigma$ is described as a composition of $\sigma'$ with $\mathbf{1}_{\alpha}\otimes H_{{+}{-}}^{{-}{+}}\otimes\mathbf{1}_{\beta}$ whrere $\epsilon=\alpha{-}{+}\beta$ and $\sigma'=\sigma_{{+}^k{-}^l}^{\alpha{+}{-}\beta}$ such that $h(\sigma')=n$.
Thus, $\sigma$ is
$
\,\tikz[baseline=-.6ex, scale=.1]{
\draw (-8,-2) -- (-8,2);
\node at (-6,0) {$\scriptstyle{\cdots}$};
\draw[->-=.5] (-4,-2) -- (-4,2);
\draw[->-=.3,-<-=.8] (-2,-2) -- (-2,2);
\draw[-<-=.3,->-=.8] (2,-2) -- (2,2);
\draw[-<-=.5] (-2,0) -- (2,0);
\draw[-<-=.5] (4,-2) -- (4,2);
\node at (6,0) {$\scriptstyle{\cdots}$};
\draw (8,-2) -- (8,2);
\draw[thick] (-9,-2) rectangle (9,-5);
\node at (0,-3.5) {$\scriptstyle{\sigma'}$};
}\,
$
,
$
\,\tikz[baseline=-.6ex, scale=.1]{
\draw (-8,-2) -- (-8,2);
\node at (-6,0) {$\scriptstyle{\cdots}$};
\draw[->-=.5] (-4,-2) -- (-4,2);
\draw[->-=.3,-<-=.8] (-2,-2) -- (-2,2);
\draw[-<-=.3,->-=.8] (2,-2) -- (2,2);
\draw[-<-=.5] (-2,0) -- (2,0);
\draw[->-=.5] (4,-2) -- (4,2);
\node at (6,0) {$\scriptstyle{\cdots}$};
\draw (8,-2) -- (8,2);
\draw[thick] (-9,-2) rectangle (9,-5);
\node at (0,-3.5) {$\scriptstyle{\sigma'}$};
}\,
$
,
$
\,\tikz[baseline=-.6ex, scale=.1]{
\draw (-8,-2) -- (-8,2);
\node at (-6,0) {$\scriptstyle{\cdots}$};
\draw[-<-=.5] (-4,-2) -- (-4,2);
\draw[->-=.3,-<-=.8] (-2,-2) -- (-2,2);
\draw[-<-=.3,->-=.8] (2,-2) -- (2,2);
\draw[-<-=.5] (-2,0) -- (2,0);
\draw[->-=.5] (4,-2) -- (4,2);
\node at (6,0) {$\scriptstyle{\cdots}$};
\draw (8,-2) -- (8,2);
\draw[thick] (-9,-2) rectangle (9,-5);
\node at (0,-3.5) {$\scriptstyle{\sigma'}$};
}\,
$
or
$
\,\tikz[baseline=-.6ex, scale=.1]{
\draw (-8,-2) -- (-8,2);
\node at (-6,0) {$\scriptstyle{\cdots}$};
\draw[-<-=.5] (-4,-2) -- (-4,2);
\draw[->-=.3,-<-=.8] (-2,-2) -- (-2,2);
\draw[-<-=.3,->-=.8] (2,-2) -- (2,2);
\draw[-<-=.5] (4,-2) -- (4,2);
\draw[-<-=.5] (-2,0) -- (2,0);
\node at (6,0) {$\scriptstyle{\cdots}$};
\draw (8,-2) -- (8,2);
\draw[thick] (-9,-2) rectangle (9,-5);
\node at (0,-3.5) {$\scriptstyle{\sigma'}$};
}\,
$
.
In the first case, 
one can show by easy calculations and the induction hypothesis. 
In the second case, 
we only have to prove 
$
\,\tikz[baseline=-.6ex, scale=.1]{
\draw (-8,-2) -- (-8,2);
\node at (-6,0) {$\scriptstyle{\cdots}$};
\draw[->-=.5] (-4,-2) -- (-4,2);
\draw[->-=.3,-<-=.8] (-2,-2) -- (-2,2);
\draw[-<-=.3,->-=.8] (2,-2) -- (2,0) to[out=north, in=west] (3,2);
\draw[-<-=.5] (-2,0) -- (2,0);
\draw[->-=.5] (4,-2) -- (4,0) to[out=north, in=east] (3,2);
\node at (6,0) {$\scriptstyle{\cdots}$};
\draw (8,-2) -- (8,2);
\draw[thick] (-9,-2) rectangle (9,-5);
\node at (0,-3.5) {$\scriptstyle{\sigma'}$};
\draw[-<-=.5] (3,2) -- (3,4);
}\,
\circ P_{{+}^k{-}^l}^{{+}^k{-}^l}=0.
$
By construction of $\sigma$, the right leg of $H_{{+}{-}}^{{-}{+}}$ and the up-pointing arc on one's right should have a crossing ($H_{{+}{-}}^{{-}{+}}$).
Then, there exists $\sigma''$ such that $h(\sigma'')=n-1$ and 
\begin{align*}
\,\tikz[baseline=-.6ex, scale=.1]{
\draw (-8,-2) -- (-8,2);
\node at (-6,0) {$\scriptstyle{\cdots}$};
\draw[->-=.5] (-4,-2) -- (-4,2);
\draw[->-=.3,-<-=.8] (-2,-2) -- (-2,2);
\draw[->-=.8] (2,-2) -- (2,0) to[out=north, in=west] (3,2);
\draw[-<-=.5] (-2,0) -- (2,0);
\draw[->-=.5] (4,-2) -- (4,0) to[out=north, in=east] (3,2);
\node at (6,0) {$\scriptstyle{\cdots}$};
\draw (8,-2) -- (8,2);
\draw[thick] (-9,-2) rectangle (9,-5);
\node at (0,-3.5) {$\scriptstyle{\sigma''}$};
\draw[-<-=.5] (3,2) -- (3,4);
\draw[-<-=.5] (2,-1) -- (4,-1);
}\,
=
\,\tikz[baseline=-.6ex, scale=.1]{
\draw (-8,-2) -- (-8,2);
\node at (-6,0) {$\scriptstyle{\cdots}$};
\draw[->-=.5] (-4,-2) -- (-4,2);
\draw[->-=.3,-<-=.8] (-2,-2) -- (-2,2);
\draw[->-=.8] (2,-2) -- (2,0);
\draw[-<-=.5] (-2,0) -- (2,0);
\draw[-<-=.5] (4,-2) -- (4,0) to[out=north, in=south] (3,2);
\node at (6,0) {$\scriptstyle{\cdots}$};
\draw (8,-2) -- (8,2);
\draw[thick] (-9,-2) rectangle (9,-5);
\node at (0,-3.5) {$\scriptstyle{\sigma''}$};
}\,
+
\,\tikz[baseline=-.6ex, scale=.1]{
\draw (-8,-2) -- (-8,2);
\node at (-6,0) {$\scriptstyle{\cdots}$};
\draw[->-=.5] (-4,-2) -- (-4,2);
\draw[->-=.3,-<-=.8] (-2,-2) -- (-2,2);
\draw[-<-=.5] (2,-2) to[out=north, in=west] (3,0) to[out=east, in=north] (4,-2);
\draw[-<-=.5] (-2,0) -- (1,0) to[out=east, in=south] (3,2);
\node at (6,0) {$\scriptstyle{\cdots}$};
\draw (8,-2) -- (8,2);
\draw[thick] (-9,-2) rectangle (9,-5);
\node at (0,-3.5) {$\scriptstyle{\sigma''}$};
}\,.
\end{align*}
These terms has $t_{{+}{+}}^{-}$ and $d_{{-}{+}}$ on the web $\sigma''$. 
Therefore, 
$
\,\tikz[baseline=-.6ex, scale=.1]{
\draw (-8,-2) -- (-8,2);
\node at (-6,0) {$\scriptstyle{\cdots}$};
\draw[->-=.5] (-4,-2) -- (-4,2);
\draw[->-=.3,-<-=.8] (-2,-2) -- (-2,2);
\draw[-<-=.3,->-=.8] (2,-2) -- (2,0) to[out=north, in=west] (3,2);
\draw[-<-=.5] (-2,0) -- (2,0);
\draw[->-=.5] (4,-2) -- (4,0) to[out=north, in=east] (3,2);
\node at (6,0) {$\scriptstyle{\cdots}$};
\draw (8,-2) -- (8,2);
\draw[thick] (-9,-2) rectangle (9,-5);
\node at (0,-3.5) {$\scriptstyle{\sigma'}$};
\draw[-<-=.5] (3,2) -- (3,4);
}\,
\circ P_{{+}^k{-}^l}^{{+}^k{-}^l}=0,
$
by the induction hypothesis for $\sigma''\circ P_{{+}^k{-}^l}^{{+}^k{-}^l}$.
For other cases, we can prove in the same way.
\end{proof}

\begin{PROP}\label{multiprop} 
Let us decompose $\epsilon$ into three subsequences $\alpha\beta\gamma$ such that $\beta$ is expressed as the form ${{+}{+}\cdots{+}{-}{-}\cdots{-}}$ or ${{-}{-}\cdots{-}{+}{+}\cdots{+}}$. 
Then,
\begin{enumerate}
\item $P_{{+}^k{-}^l}^{\epsilon}=(\mathbf{1}_{\alpha}\otimes P_{\beta}^{\beta}\otimes\mathbf{1}_{\gamma})
P_{{+}^k{-}^l}^{\epsilon}$,
\item $(P_{{+}^k{-}^l}^{\epsilon})^{*}P_{{+}^k{-}^l}^{\epsilon}=P_{{+}^k{-}^l}^{{+}^k{-}^l}$.
\end{enumerate}
Especially, $tP_{{+}^k{-}^l}^{\epsilon}=0$ and $dP_{{+}^k{-}^l}^{\epsilon}=0$ where $t$ (resp.~$d$) is a tensor product of identity morphisms and $t_{{-}{-}}^{{+}}$ or $t_{{+}{+}}^{{-}}$ (resp. $d_{{+}{-}}$ or $d_{{-}{+}}$).
\end{PROP}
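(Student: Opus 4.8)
The plan is to treat the two assertions separately, leaning throughout on three facts already available: the turnback decomposition of a non-trivial self-web from Lemma~\ref{decomp}, the vanishing of every composite of $P_{{+}^k{-}^l}^{\epsilon}$ with a turnback web $t$ or $d$ established in the preceding proposition, and the self-adjointness $(P_{{+}^k{-}^l}^{{+}^k{-}^l})^{*}=P_{{+}^k{-}^l}^{{+}^k{-}^l}$ from Remark~\ref{doublerem}. Throughout write $P=P_{{+}^k{-}^l}^{{+}^k{-}^l}$ and $\sigma=\sigma_{{+}^k{-}^l}^{\epsilon}$, so that $P_{{+}^k{-}^l}^{\epsilon}=\sigma P$ by definition.

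For part~(1), note that since $\beta$ has the form ${+}\cdots{+}{-}\cdots{-}$ or ${-}\cdots{-}{+}\cdots{+}$, the web $P_{\beta}^{\beta}$ is one of the idempotent clasps of Definition~\ref{OYdef}, whose expansion in $A_2$ basis webs reads $P_{\beta}^{\beta}=\mathbf 1_{\beta}+x_{\beta}$, with $\mathbf 1_{\beta}$ occurring with coefficient $1$ (the $i=0$ term) and every basis web in $x_{\beta}$ non-trivial. By Lemma~\ref{decomp} each such basis web carries a turnback ($t_{--}^{+}$, $t_{++}^{-}$, $d_{-+}$, or $d_{+-}$) on its bottom side, so every basis web appearing in $\mathbf 1_{\alpha}\otimes x_{\beta}\otimes\mathbf 1_{\gamma}$ has a turnback $\tau$ sitting directly above the top of $P_{{+}^k{-}^l}^{\epsilon}$. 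Composing with $P_{{+}^k{-}^l}^{\epsilon}$ and invoking $(\mathbf 1\otimes\tau\otimes\mathbf 1)\,P_{{+}^k{-}^l}^{\epsilon}=0$ from the preceding proposition kills each of these terms, leaving $(\mathbf 1_{\alpha}\otimes P_{\beta}^{\beta}\otimes\mathbf 1_{\gamma})P_{{+}^k{-}^l}^{\epsilon}=P_{{+}^k{-}^l}^{\epsilon}$.

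For part~(2), I would induct on the number $h(\sigma)$ of $H_{{+}{-}}^{{-}{+}}$'s in $\sigma$, reusing the decomposition $\sigma=(\mathbf 1_{\alpha}\otimes H_{{+}{-}}^{{-}{+}}\otimes\mathbf 1_{\beta})\,\sigma'$ with $h(\sigma')=h(\sigma)-1$ and $\sigma'=\sigma_{{+}^k{-}^l}^{\epsilon'}$, $\epsilon'=\alpha{+}{-}\beta$, from the preceding proof. When $h(\sigma)=0$ we have $\sigma=\mathbf 1$ and $(P_{{+}^k{-}^l}^{\epsilon})^{*}P_{{+}^k{-}^l}^{\epsilon}=P^{*}P=P$. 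Writing $u=\mathbf 1_{\alpha}\otimes H_{{+}{-}}^{{-}{+}}\otimes\mathbf 1_{\beta}$ and using $P^{*}=P$, one gets $(P_{{+}^k{-}^l}^{\epsilon})^{*}P_{{+}^k{-}^l}^{\epsilon}=P\,\sigma'^{*}u^{*}u\,\sigma' P$. The $4$-gon reduction (the first composition relation) gives $u^{*}u=\mathbf 1_{\epsilon'}+(\mathbf 1_{\alpha}\otimes R\otimes\mathbf 1_{\beta})$, where $R$ is the turnback term and $\mathbf 1_{\epsilon'}$ appears with coefficient one; hence the product splits as $P\sigma'^{*}\sigma' P+P\sigma'^{*}(\mathbf 1_{\alpha}\otimes R\otimes\mathbf 1_{\beta})\sigma' P$. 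The first summand is $(P_{{+}^k{-}^l}^{\epsilon'})^{*}P_{{+}^k{-}^l}^{\epsilon'}=P$ by the induction hypothesis. In the second summand $R$ factors as $R=\eta\varepsilon$ through a cap $\varepsilon$ closing the middle ${+}{-}$ pair, and $(\mathbf 1_{\alpha}\otimes\varepsilon\otimes\mathbf 1_{\beta})\,(\sigma' P)=0$ because it is a turnback composed on top of $P_{{+}^k{-}^l}^{\epsilon'}=\sigma' P$, again by the preceding proposition; so the second summand vanishes and $(P_{{+}^k{-}^l}^{\epsilon})^{*}P_{{+}^k{-}^l}^{\epsilon}=P$.

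Finally, the concluding clause follows from part~(1): taking $\beta$ to be the two adjacent strands on which $t$ (resp.\ $d$) acts, a length-two sequence that is automatically of the admissible form, part~(1) gives $P_{{+}^k{-}^l}^{\epsilon}=(\mathbf 1_{\alpha}\otimes P_{\beta}^{\beta}\otimes\mathbf 1_{\gamma})P_{{+}^k{-}^l}^{\epsilon}$, whence $t\,P_{\beta}^{\beta}=0$ and $d\,P_{\beta}^{\beta}=0$ from Proposition~\ref{singleproj}(2) and Proposition~\ref{doubleproj}(2) force $t\,P_{{+}^k{-}^l}^{\epsilon}=0=d\,P_{{+}^k{-}^l}^{\epsilon}$. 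I expect the main obstacle to be part~(2): one must check that the $4$-gon reduction of $u^{*}u$ genuinely yields the identity with coefficient one together with a turnback $R$ that factors through a cap, and that this cap is one of the webs $t$, $d$ to which the preceding proposition applies. Once that geometric input is pinned down, the remaining steps are routine bookkeeping with Lemma~\ref{decomp} and the absorption properties of the clasps.
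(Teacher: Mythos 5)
Your proposal is correct and follows essentially the same route as the paper: part (1) by expanding $P_{\beta}^{\beta}=\mathbf{1}_{\beta}+x_{\beta}$ and killing the turnback terms via the preceding proposition, and part (2) by peeling off one $H_{{+}{-}}^{{-}{+}}$ at a time and applying the $4$-gon relation to $\tau^{*}\tau$ (the paper states this without spelling out the induction on $h(\sigma)$, which you make explicit). The only nitpick is that the $i=0$ term of Definition~\ref{OYdef} is $P_{{+}^s}^{{+}^s}\otimes P_{{-}^t}^{{-}^t}$ rather than $\mathbf{1}_{\beta}$ itself, so the normalization $P_{\beta}^{\beta}=\mathbf{1}_{\beta}+x_{\beta}$ requires the additional (standard, and used elsewhere in the paper) fact that the single clasps expand as the identity plus turnback terms with coefficient one.
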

\begin{proof}
(1) is easily shown by expanding $P_{\beta}^{\beta}$.
We can desctibe $P_{{+}^k{-}^l}^{\epsilon}$ as a product $\tau P_{{+}^k{-}^l}^{\epsilon'}$ where $\tau$ is a tensor product of identity morphisms and only one $H_{{+}{-}}^{{-}{+}}$ or $H_{{-}{+}}^{{+}{-}}$.
Then, $(P_{{+}^k{-}^l}^{\epsilon})^{*}P_{{+}^k{-}^l}^{\epsilon}=(P_{{+}^k{-}^l}^{\epsilon'})^{*}\tau^*\tau P_{{+}^k{-}^l}^{\epsilon'}$ and one can finish the proof of (2) by applying the defining relation of the $A_2$ web to $\tau^*\tau$.
\end{proof}

Let us define an $A_2$ clasp $P_{\epsilon}^{\epsilon}$ in $\TL(\epsilon,\epsilon)$ by $P_{{+}^k{-}^l}^{\epsilon}(P_{{+}^k{-}^l}^{\epsilon})^{*}$.
Then,
\begin{PROP}\label{eeprop}\ 
\begin{enumerate}
\item $(P_{\epsilon}^{\epsilon})^2=P_{\epsilon}^{\epsilon}$,
\item $tP_{\epsilon}^{\epsilon}$, $dP_{\epsilon}^{\epsilon}$, $P_{\epsilon}^{\epsilon}b$, and $P_{\epsilon}^{\epsilon}t'$ vanish.
\end{enumerate}
In the above, $t'$ (resp.~$b$) is a tensor product of identity morphisms and $t_{{-}}^{{+}{+}}$ or $t_{{+}}^{{-}{-}}$ (resp. $b^{{-}{+}}$ or $b^{{+}{-}}$).
\end{PROP}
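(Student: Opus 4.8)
The plan is to reduce every assertion to Proposition~\ref{multiprop} together with the contravariance of the star operator. Write $A:=P_{{+}^k{-}^l}^{\epsilon}$, so that by definition $P_{\epsilon}^{\epsilon}=A A^{*}$, i.e. the composite $\epsilon\xrightarrow{A^{*}}{+}^k{-}^l\xrightarrow{A}\epsilon$. I will use two facts throughout: that $P_{{+}^k{-}^l}^{{+}^k{-}^l}$ is idempotent, which is the case $s=k$, $t=l$, $a=b=0$ of Proposition~\ref{doubleproj}~(1), and that $A=\sigma_{{+}^k{-}^l}^{\epsilon}\,P_{{+}^k{-}^l}^{{+}^k{-}^l}$ by the definition of $P_{{+}^k{-}^l}^{\epsilon}$. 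Note also that $P_{\epsilon}^{\epsilon}$ is self-adjoint, since ${}^{*}$ is a contravariant involution and hence $(A A^{*})^{*}=(A^{*})^{*}A^{*}=A A^{*}$.

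For (1) I would expand $(P_{\epsilon}^{\epsilon})^{2}=A\,(A^{*}A)\,A^{*}$ and collapse the inner pair by Proposition~\ref{multiprop}~(2), which gives $A^{*}A=P_{{+}^k{-}^l}^{{+}^k{-}^l}$. It then remains to observe that $A\,P_{{+}^k{-}^l}^{{+}^k{-}^l}=\sigma_{{+}^k{-}^l}^{\epsilon}\,(P_{{+}^k{-}^l}^{{+}^k{-}^l})^{2}=\sigma_{{+}^k{-}^l}^{\epsilon}\,P_{{+}^k{-}^l}^{{+}^k{-}^l}=A$ by idempotency, whence $(P_{\epsilon}^{\epsilon})^{2}=A A^{*}=P_{\epsilon}^{\epsilon}$.

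For (2) the two left compositions are immediate from associativity: $tP_{\epsilon}^{\epsilon}=(tA)A^{*}$ and $dP_{\epsilon}^{\epsilon}=(dA)A^{*}$, and $tA=dA=0$ is exactly the ``especially'' clause of Proposition~\ref{multiprop}. For the two right compositions I would apply the star operator and use self-adjointness, obtaining $(P_{\epsilon}^{\epsilon}b)^{*}=b^{*}P_{\epsilon}^{\epsilon}$ and $(P_{\epsilon}^{\epsilon}t')^{*}=(t')^{*}P_{\epsilon}^{\epsilon}$. Since reflecting through the horizontal midline and reversing arrows carries each cup $b^{{-}{+}},b^{{+}{-}}$ to a cap of type $d$ and each splitting vertex $t_{{-}}^{{+}{+}},t_{{+}}^{{-}{-}}$ to a merging vertex of type $t$, the morphisms $b^{*}$ and $(t')^{*}$ are precisely of the kinds handled by the left compositions; hence $b^{*}P_{\epsilon}^{\epsilon}=(t')^{*}P_{\epsilon}^{\epsilon}=0$, and because ${}^{*}$ is an involution this forces $P_{\epsilon}^{\epsilon}b=P_{\epsilon}^{\epsilon}t'=0$.

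There is no deep obstacle here: the genuine content is already packaged in Proposition~\ref{multiprop}, and what remains is bookkeeping. The only points demanding care are tracking composition orders and tensor positions so that the cited identities apply verbatim, and verifying that the star operator sends each cap/cup and each trivalent vertex to a morphism of exactly the type occurring in part~(2), so that reducing the right compositions to the left ones is legitimate.
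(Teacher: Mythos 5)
Your argument for (1) — expanding $(AA^{*})^{2}=A(A^{*}A)A^{*}$, collapsing $A^{*}A$ to $P_{{+}^k{-}^l}^{{+}^k{-}^l}$ by Proposition~\ref{multiprop}~(2), and absorbing that idempotent into $A$ — is exactly the computation the paper gives. The paper leaves (2) unwritten, and your completion of it (the left compositions from the ``especially'' clause of Proposition~\ref{multiprop}, the right ones by applying ${}^{*}$ and noting that it exchanges $b$ with $d$ and $t'$ with $t$, using $(AA^{*})^{*}=AA^{*}$ directly rather than the later uniqueness remark, which would be circular) is the intended routine verification, so the proposal is correct and essentially the same as the paper's proof.
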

\begin{proof}
\begin{align*}
(P_{\epsilon}^{\epsilon})^2
&=P_{{+}^k{-}^l}^{\epsilon}(P_{{+}^k{-}^l}^{\epsilon})^{*}P_{{+}^k{-}^l}^{\epsilon}(P_{{+}^k{-}^l}^{\epsilon})^{*}\\
&=P_{{+}^k{-}^l}^{\epsilon}P_{{+}^k{-}^l}^{{+}^k{-}^l}(P_{{+}^k{-}^l}^{\epsilon})^{*}\\
&=P_{{+}^k{-}^l}^{\epsilon}(P_{{+}^k{-}^l}^{\epsilon})^{*}=P_{\epsilon}^{\epsilon}
\end{align*}
\end{proof}

\begin{PROP}[Uniqueness]\label{eeunique}
If $T$ in $\TL(\epsilon,\epsilon)$ satisfies the conditions (1) and (2) of Proposition~\ref{eeprop}, then $T=P_{\epsilon}^{\epsilon}$.
\end{PROP}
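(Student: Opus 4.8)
The plan is to follow the template of the proof of Proposition~\ref{singleunique}, replacing the single reduction used there by the two--sided turnback description of Lemma~\ref{decomp} together with the four vanishing relations of Proposition~\ref{eeprop}~(2). Recall that $P_{\epsilon}^{\epsilon}$ itself already satisfies conditions (1) and (2) by Proposition~\ref{eeprop}, so $T$ and $P_{\epsilon}^{\epsilon}$ play symmetric roles and I only need to compare them. I will assume, as in Proposition~\ref{singleunique}, that $T$ is nontrivial, i.e.\ $T\neq 0$.

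The only structural input I would record first is the expansion along basis webs. Since the $A_2$ basis webs form a basis of $\TL(\epsilon,\epsilon)$, I write
\[
T=c\,\mathbf{1}_{\epsilon}+x,\qquad P_{\epsilon}^{\epsilon}=c'\,\mathbf{1}_{\epsilon}+x',
\]
where $x$ and $x'$ are supported on basis webs different from $\mathbf{1}_{\epsilon}$. By Lemma~\ref{decomp}, every such basis web carries one of $t_{-}^{{+}{+}},t_{+}^{{-}{-}},b^{{-}{+}},b^{{+}{-}}$ on its top side and one of $t_{{-}{-}}^{+},t_{{+}{+}}^{-},d_{{-}{+}},d_{{+}{-}}$ on its bottom side.

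The key step, which I expect to be the only real content, is the pair of vanishing statements: for every basis web $w\neq\mathbf{1}_{\epsilon}$ one has $wT=0$ and $P_{\epsilon}^{\epsilon}w=0$ (and the same with $T$ replaced by any morphism satisfying condition (2)). For the first, I would peel off the boundary turnback guaranteed by Lemma~\ref{decomp} on the \emph{bottom} of $w$, factoring $w=\tilde{w}\,\tau$ with $\tau$ a tensor product of identity morphisms and a single $t_{{-}{-}}^{+},t_{{+}{+}}^{-},d_{{-}{+}}$, or $d_{{+}{-}}$; then $wT=\tilde{w}\,(\tau T)$, and $\tau T$ is exactly one of $tT$ or $dT$, which vanishes by Proposition~\ref{eeprop}~(2). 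For the second I would instead peel off the \emph{top} turnback of $w$, writing $w=\tau'\,\hat{w}$ with $\tau'$ built from a single $t_{-}^{{+}{+}},t_{+}^{{-}{-}},b^{{-}{+}}$, or $b^{{+}{-}}$, so that $P_{\epsilon}^{\epsilon}w=(P_{\epsilon}^{\epsilon}\tau')\,\hat{w}$ with $P_{\epsilon}^{\epsilon}\tau'$ equal to $P_{\epsilon}^{\epsilon}t'$ or $P_{\epsilon}^{\epsilon}b$, again zero by Proposition~\ref{eeprop}~(2). Summing over the terms of $x$ and $x'$ then gives $xT=x'T=0$ and $P_{\epsilon}^{\epsilon}x=P_{\epsilon}^{\epsilon}x'=0$. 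The main obstacle here is purely bookkeeping: verifying that the boundary turnback supplied by Lemma~\ref{decomp} can always be detached as an honest tensor factor $\mathbf{1}\otimes(\cdot)\otimes\mathbf{1}$ adjacent to the boundary, and matching its sign type to the correct relation among the four in Proposition~\ref{eeprop}~(2).

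Granting this, the conclusion is immediate. Using $xT=0$ and condition (1) for $T$, one computes
\[
T=T^{2}=(c\,\mathbf{1}_{\epsilon}+x)\,T=c\,T,
\]
so $c=1$ since $T\neq 0$; the identical computation with Proposition~\ref{eeprop}~(1) forces $c'=1$. Finally, combining the vanishing relations with $c=c'=1$,
\[
P_{\epsilon}^{\epsilon}\,T=(\mathbf{1}_{\epsilon}+x')\,T=T
\qquad\text{and}\qquad
P_{\epsilon}^{\epsilon}\,T=P_{\epsilon}^{\epsilon}\,(\mathbf{1}_{\epsilon}+x)=P_{\epsilon}^{\epsilon},
\]
whence $T=P_{\epsilon}^{\epsilon}$, as desired.
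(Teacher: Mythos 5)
Your proposal is correct and follows exactly the route the paper intends: the paper's proof of this proposition is just ``In the same way as the proof of Proposition~\ref{singleunique},'' and your argument is precisely that template (expand $T$ and $P_{\epsilon}^{\epsilon}$ as $c\,\mathbf{1}_{\epsilon}+x$ via Lemma~\ref{decomp}, kill the non-identity terms using the four vanishing relations of Proposition~\ref{eeprop}~(2), deduce $c=c'=1$ from idempotency, and compute $P_{\epsilon}^{\epsilon}T$ two ways), spelled out in more detail than the paper bothers to.
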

\begin{proof}
In the same way as the proof of Proposition~\ref{singleunique}.
\end{proof}

\begin{RMK}
Proposition~\ref{eeunique} implies $(P_{\epsilon}^{\epsilon})^{*}=P_{\epsilon}^{\epsilon}$.
\end{RMK}

For any non-negative integers $k$ and $l$, 
we consider a subset $\TL^{(k,l)}=\{\,\epsilon\in\TL\mid k=\#\epsilon^{-1}({+}), l=\#\epsilon^{-1}({-})\,\}$ of $\TL$. 
\begin{PROP}\label{generalprop}
There exist a set of $A_2$ webs $\{\,P_{\alpha}^{\beta}\in\TL(\alpha,\beta)\mid \alpha,\beta\in\TL^{(k,l)}\,\}$ satisfying
\begin{enumerate}
\item $P_{\beta}^{\gamma}P_{\alpha}^{\beta}=P_{\alpha}^{\gamma}$ for any $\alpha,\beta,\gamma\in\TL^{(k,l)}$,
\item $tP_{\alpha}^{\beta}$, $dP_{\alpha}^{\beta}$, $P_{\alpha}^{\beta}b$, and $P_{\alpha}^{\beta}t'$ vanish for any $\alpha,\beta\in\TL^{(k,l)}$.
\end{enumerate}
\end{PROP}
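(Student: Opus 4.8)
The plan is to build the whole family out of the clasps already at hand and then extract both properties from Proposition~\ref{multiprop}, from the idempotency of $P_{{+}^k{-}^l}^{{+}^k{-}^l}$, and from the formal behaviour of the star operator. For $\alpha,\beta\in\TL^{(k,l)}$ I would set
\[
P_{\alpha}^{\beta}:=P_{{+}^k{-}^l}^{\beta}\,(P_{{+}^k{-}^l}^{\alpha})^{*}\in\TL(\alpha,\beta).
\]
This lives in the right morphism space because $P_{{+}^k{-}^l}^{\alpha}\in\TL({+}^k{-}^l,\alpha)$ gives $(P_{{+}^k{-}^l}^{\alpha})^{*}\in\TL(\alpha,{+}^k{-}^l)$, and it specializes to the previously defined $P_{\epsilon}^{\epsilon}$ when $\alpha=\beta=\epsilon$, so there is no clash of notation.

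For property~(1) I would simply compose, abbreviating $P^{\delta}:=P_{{+}^k{-}^l}^{\delta}$:
\[
P_{\beta}^{\gamma}P_{\alpha}^{\beta}
=P^{\gamma}\,(P^{\beta})^{*}P^{\beta}\,(P^{\alpha})^{*}
=P^{\gamma}\,P_{{+}^k{-}^l}^{{+}^k{-}^l}\,(P^{\alpha})^{*},
\]
where the middle contraction $(P^{\beta})^{*}P^{\beta}=P_{{+}^k{-}^l}^{{+}^k{-}^l}$ is Proposition~\ref{multiprop}~(2). Expanding $P^{\gamma}=\sigma_{{+}^k{-}^l}^{\gamma}P_{{+}^k{-}^l}^{{+}^k{-}^l}$ and using $(P_{{+}^k{-}^l}^{{+}^k{-}^l})^{2}=P_{{+}^k{-}^l}^{{+}^k{-}^l}$ yields $P^{\gamma}P_{{+}^k{-}^l}^{{+}^k{-}^l}=P^{\gamma}$, so the right-hand side collapses to $P^{\gamma}(P^{\alpha})^{*}=P_{\alpha}^{\gamma}$, as required.

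Property~(2) splits into a left half and a right half. The left-absorptions are immediate from the ``Especially'' clause of Proposition~\ref{multiprop}: since $tP^{\beta}=0=dP^{\beta}$, we get $tP_{\alpha}^{\beta}=(tP^{\beta})(P^{\alpha})^{*}=0$ and likewise $dP_{\alpha}^{\beta}=0$ for all $\alpha,\beta$. For the right-absorptions I would pass through the star operator, using $(AB)^{*}=B^{*}A^{*}$, the involutivity of $*$, and $(P_{\alpha}^{\beta})^{*}=P^{\alpha}(P^{\beta})^{*}=P_{\beta}^{\alpha}$. Then $(P_{\alpha}^{\beta}b)^{*}=b^{*}P_{\beta}^{\alpha}$ and $(P_{\alpha}^{\beta}t')^{*}=(t')^{*}P_{\beta}^{\alpha}$; recognizing $b^{*}$ as a $d$-type web and $(t')^{*}$ as a $t$-type web, the already-established left-absorptions give $b^{*}P_{\beta}^{\alpha}=0$ and $(t')^{*}P_{\beta}^{\alpha}=0$, and applying $*$ once more returns $P_{\alpha}^{\beta}b=0$ and $P_{\alpha}^{\beta}t'=0$.

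The genuinely careful step --- the only one I expect to require attention --- is the sign bookkeeping under the star operator: one must check that reflecting a cup $b^{{-}{+}}$ or $b^{{+}{-}}$ through the horizontal axis and reversing its orientation produces a cap $d_{{+}{-}}$ or $d_{{-}{+}}$, and that reflecting a source/sink web $t_{{-}}^{{+}{+}}$ or $t_{{+}}^{{-}{-}}$ produces $t_{{+}{+}}^{{-}}$ or $t_{{-}{-}}^{{+}}$ with the matching signs. This is a direct unwinding of the definitions of $*$ and of the $b,d,t,t'$ webs, so it poses no real difficulty; everything else is forced by Proposition~\ref{multiprop} and the idempotency of $P_{{+}^k{-}^l}^{{+}^k{-}^l}$.
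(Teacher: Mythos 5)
Your proposal is correct and follows the paper's proof essentially verbatim: the same definition $P_{\alpha}^{\beta}=P_{{+}^k{-}^l}^{\beta}(P_{{+}^k{-}^l}^{\alpha})^{*}$, the same contraction via Proposition~\ref{multiprop}~(2) for property~(1), and the same appeal to the absorption properties (plus the star operator for the right-hand versions) for property~(2). The only cosmetic difference is that you collapse $P_{{+}^k{-}^l}^{\gamma}P_{{+}^k{-}^l}^{{+}^k{-}^l}=P_{{+}^k{-}^l}^{\gamma}$ using $P^{\gamma}=\sigma^{\gamma}P_{{+}^k{-}^l}^{{+}^k{-}^l}$ and idempotency, whereas the paper expands $P_{{+}^k{-}^l}^{{+}^k{-}^l}=\mathbf{1}_{{+}^k{-}^l}+x$ and kills $x$; both are valid.
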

\begin{proof}
Let us define $P_{\alpha}^{\beta}=P_{{+}^k{-}^l}^{\beta}(P_{{+}^k{-}^l}^{\alpha})^{*}$ for any $\alpha,\beta,\gamma\in\TL^{(k,l)}$. 
Then, it is obvious that $P_{\alpha}^{\beta}$ satisfies (2) because of Proposition~\ref{multiprop}~(1).
We show the equation of (1):
\begin{align*}
P_{\beta}^{\gamma}P_{\alpha}^{\beta}
&=P_{{+}^k{-}^l}^{\gamma}(P_{{+}^k{-}^l}^{\beta})^{*}P_{{+}^k{-}^l}^{\beta}(P_{{+}^k{-}^l}^{\alpha})^{*}\\
&=P_{{+}^k{-}^l}^{\gamma}P_{{+}^k{-}^l}^{{+}^k{-}^l}(P_{{+}^k{-}^l}^{\alpha})^{*}\quad\text{ by Proposition~\ref{multiprop}~(2)}\\
&=P_{{+}^k{-}^l}^{\gamma}(\mathbf{1}_{{+}^k{-}^l}+x)(P_{{+}^k{-}^l}^{\alpha})^{*}\\
&=P_{{+}^k{-}^l}^{\gamma}(P_{{+}^k{-}^l}^{\alpha})^{*}=P_{\alpha}^{\gamma}.
\end{align*}
\end{proof}

\subsection{Braidings in $\TL$}
We introduce A {\em braiding} $\{c_{\delta,\epsilon}\colon\delta\otimes\epsilon\to\epsilon\otimes\delta\}$ which is a family of isomorphisms in $\TL$.
A definition of the braiding in the diagrammatic category $\TL$ is given by Kuperberg~\cite{Kuperberg94, Kuperberg96}.
In detail about a general theory of braidings in monoidal categories, for example, see~\cite{Turaev91}.

Let us define a description of an $A_2$ web by using a crossing with over/under information introduced in Kuperberg~\cite{Kuperberg96}:
\begin{align*}
&c_{{+},{+}}
=\,\tikz[baseline=-.6ex, scale=.1]{
\draw[->-=.2,->-=.8] (-45:4) -- (135:4);
\draw[->-=.2,->-=.8, white, double=black, double distance=0.4pt, line width=2.4pt] 
(-135:4) -- (45:4);
}\,
=
q^{\frac{1}{3}}
\,\tikz[baseline=-.6ex, scale=.1]{
\draw[->-=.5] (-2,-3) -- (-2,3);
\draw[->-=.5] (2,-3) -- (2,3);
}\,
-
q^{-\frac{1}{6}}
\,\tikz[baseline=-.6ex, scale=.1]{
\draw[->-=.5] (-45:4) -- (0,-1);
\draw[->-=.5] (-135:4) -- (0,-1);
\draw[-<-=.5] (0,-1) -- (0,1);
\draw[-<-=.5] (45:4) -- (0,1);
\draw[-<-=.5] (135:4) -- (0,1);
}\, ,
&c_{{-},{-}}
=\,\tikz[baseline=-.6ex, scale=.1]{
\draw[-<-=.2,-<-=.8] (-45:4) -- (135:4);
\draw[-<-=.2,-<-=.8, white, double=black, double distance=0.4pt, line width=2.4pt] 
(-135:4) -- (45:4);
}\,
=
q^{\frac{1}{3}}
\,\tikz[baseline=-.6ex, scale=.1]{
\draw[-<-=.5] (-2,-3) -- (-2,3);
\draw[-<-=.5] (2,-3) -- (2,3);
}\,
-
q^{-\frac{1}{6}}
\,\tikz[baseline=-.6ex, scale=.1]{
\draw[-<-=.5] (-45:4) -- (0,-1);
\draw[-<-=.5] (-135:4) -- (0,-1);
\draw[->-=.5] (0,-1) -- (0,1);
\draw[->-=.5] (45:4) -- (0,1);
\draw[->-=.5] (135:4) -- (0,1);
}\, ,\\
&c_{{+},{-}}
=\,\tikz[baseline=-.6ex, scale=.1]{
\draw[-<-=.2,-<-=.8] (-45:4) -- (135:4);
\draw[->-=.2,->-=.8, white, double=black, double distance=0.4pt, line width=2.4pt] 
(-135:4) -- (45:4);
}\,
=
q^{-\frac{1}{3}}
\,\tikz[baseline=-.6ex, scale=.1]{
\draw[->-=.5] (-2,3) to[out=south, in=west] (0,1) to[out=east, in=south] (2,3);
\draw[->-=.5] (-2,-3) to[out=north, in=west] (0,-1) to[out=east, in=north] (2,-3);
}\,
-
q^{\frac{1}{6}}
\,\tikz[baseline=-.6ex, scale=.1, rotate=-90]{
\draw[->-=.5] (-45:4) -- (0,-1);
\draw[->-=.5] (-135:4) -- (0,-1);
\draw[-<-=.5] (0,-1) -- (0,1);
\draw[-<-=.5] (45:4) -- (0,1);
\draw[-<-=.5] (135:4) -- (0,1);
}\, ,
&c_{{-},{+}}
=\,\tikz[baseline=-.6ex, scale=.1]{
\draw[->-=.2,->-=.8] (-45:4) -- (135:4);
\draw[-<-=.2,-<-=.8, white, double=black, double distance=0.4pt, line width=2.4pt] 
(-135:4) -- (45:4);
}\,
=
q^{-\frac{1}{3}}
\,\tikz[baseline=-.6ex, scale=.1]{
\draw[-<-=.5] (-2,3) to[out=south, in=west] (0,1) to[out=east, in=south] (2,3);
\draw[-<-=.5] (-2,-3) to[out=north, in=west] (0,-1) to[out=east, in=north] (2,-3);
}\,
-
q^{\frac{1}{6}}
\,\tikz[baseline=-.6ex, scale=.1, rotate=90]{
\draw[->-=.5] (-45:4) -- (0,-1);
\draw[->-=.5] (-135:4) -- (0,-1);
\draw[-<-=.5] (0,-1) -- (0,1);
\draw[-<-=.5] (45:4) -- (0,1);
\draw[-<-=.5] (135:4) -- (0,1);
}\, .
\end{align*}
We also describe their inverses as
\begin{align*}
&c_{{+},{+}}^{-1}
=
\,\tikz[baseline=-.6ex, scale=.1, yscale=-1]{
\draw[-<-=.2,-<-=.8] (-45:4) -- (135:4);
\draw[-<-=.2,-<-=.8, white, double=black, double distance=0.4pt, line width=2.4pt] 
(-135:4) -- (45:4);
}\,
=c_{{+},{+}}^{*}, 
&c_{{-},{-}}^{-1}
=
\,\tikz[baseline=-.6ex, scale=.1, yscale=-1]{
\draw[->-=.2,->-=.8] (-45:4) -- (135:4);
\draw[->-=.2,->-=.8, white, double=black, double distance=0.4pt, line width=2.4pt] 
(-135:4) -- (45:4);
}\,
=c_{{-},{-}}^{*}, \\
&c_{{+},{-}}^{-1}
=
\,\tikz[baseline=-.6ex, scale=.1, yscale=-1]{
\draw[->-=.2,->-=.8] (-45:4) -- (135:4);
\draw[-<-=.2,-<-=.8, white, double=black, double distance=0.4pt, line width=2.4pt] 
(-135:4) -- (45:4);
}\,
=c_{{+},{-}}^{*}, 
&c_{{-},{+}}^{-1}
=
\,\tikz[baseline=-.6ex, scale=.1, yscale=-1]{
\draw[-<-=.2,-<-=.8] (-45:4) -- (135:4);
\draw[->-=.2,->-=.8, white, double=black, double distance=0.4pt, line width=2.4pt] 
(-135:4) -- (45:4);
}\,
=c_{{-},{+}}^{*}.
\end{align*}
By the above description, 
we can consider $A_2$ webs with over/under crossings.
These $A_2$ webs satisfies the {\em Reidemeister moves} (R1)--(R4) for framed tangled trivalent graphs, 
that is, we can confirm the following local moves of $A_2$ webs:
\begin{align*}
&\text{(R1)}~
\,\tikz[baseline=-.6ex, scale=.1]{
\draw [thin, dashed] (0,0) circle [radius=5];
\draw (3,-2)
to[out=south, in=east] (2,-3)
to[out=west, in=south] (0,0)
to[out=north, in=west] (2,3)
to[out=east, in=north] (3,2);
\draw[white, double=black, double distance=0.4pt, line width=2.4pt] (0,-5) 
to[out=north, in=west] (2,-1)
to[out=east, in=north] (3,-2);
\draw[white, double=black, double distance=0.4pt, line width=2.4pt] (3,2)
to[out=south, in=east] (2,1)
to[out=west, in=south] (0,5);
}\,
\tikz[baseline=-.6ex, scale=.5]{
\draw [<->, xshift=1.5cm] (1,0)--(2,0);
}\,
\tikz[baseline=-.6ex, scale=.1, xshift=3cm]{
\draw[thin, dashed] (0,0) circle [radius=5];
\draw (90:5) to (-90:5);
}\, ,
&&\text{(R2)}~
\,\tikz[baseline=-.6ex, scale=.1]{
\draw[thin, dashed] (0,0) circle [radius=5];
\draw (135:5) to[out=south east, in=west] (0,-2) to[out=east, in=south west](45:5);
\draw[white, double=black, double distance=0.4pt, line width=2.4pt]
(-135:5) to[out=north east, in=west] (0,2) to[out=east, in=north west] (-45:5);
}\,
\,\tikz[baseline=-.6ex, scale=.5]{
\draw [<->, xshift=1.5cm] (1,0)--(2,0);
}\,
\,\tikz[baseline=-.6ex, scale=.1, xshift=3cm]{
\draw[thin, dashed] (0,0) circle [radius=5];
\draw (135:5) to[out=south east, in=west](0,2) to[out=east, in=south west](45:5);
\draw (-135:5) to[out=north east, in=west](0,-2) to[out=east, in=north west] (-45:5);
}\, ,\\
&\text{(R3)}~
\,\tikz[baseline=-.6ex, scale=.1]{
\draw [thin, dashed] (0,0) circle [radius=5];
\draw (-135:5) -- (45:5);
\draw[white, double=black, double distance=0.4pt, line width=2.4pt] (135:5) -- (-45:5);
\draw[white, double=black, double distance=0.4pt, line width=2.4pt] 
(180:5) to[out=east, in=west](0,3) to[out=east, in=west] (-0:5);
}\,
\,\tikz[baseline=-.6ex, scale=.5]{
\draw[<->, xshift=1.5cm] (1,0)--(2,0);
}\,
\,\tikz[baseline=-.6ex, scale=.1, xshift=3cm]{
\draw [thin, dashed] (0,0) circle [radius=5];
\draw (-135:5) -- (45:5);
\draw [white, double=black, double distance=0.4pt, line width=2.4pt] (135:5) -- (-45:5);
\draw[white, double=black, double distance=0.4pt, line width=2.4pt] 
(180:5) to[out=east, in=west] (0,-3) to[out=east, in=west] (-0:5);
}\, ,
&&\text{(R4)}~
\,\tikz[baseline=-.6ex, scale=.1]{
\draw [thin, dashed] (0,0) circle [radius=5];
\draw (0:0) -- (90:5); 
\draw (0:0) -- (210:5); 
\draw (0:0) -- (-30:5);
\draw[white, double=black, double distance=0.4pt, line width=2.4pt]
(180:5) to[out=east, in=west] (0,3) to[out=east, in=west] (0:5);
}\,
\,\tikz[baseline=-.6ex, scale=.5]{
\draw[<->, xshift=1.5cm] (1,0)--(2,0);
}\,
\,\tikz[baseline=-.6ex, scale=.1]{
\draw [thin, dashed] (0,0) circle [radius=5];
\draw (0:0) -- (90:5); 
\draw (0:0) -- (210:5); 
\draw (0:0) -- (-30:5);
\draw[white, double=black, double distance=0.4pt, line width=2.4pt]
(180:5) to[out=east, in=west] (0,-3) to[out=east, in=west](0:5);
}\, , 
\,\tikz[baseline=-.6ex, scale=.1]{
\draw [thin, dashed] (0,0) circle [radius=5];
\draw[white, double=black, double distance=0.4pt, line width=2.4pt]
(180:5) to[out=east, in=west] (0,3) to[out=east, in=west] (0:5);
\draw[white, double=black, double distance=0.4pt, line width=2.4pt] (0:0) -- (90:5); 
\draw (0:0) -- (210:5); 
\draw (0:0) -- (-30:5);
}\,
\,\tikz[baseline=-.6ex, scale=.5]{
\draw[<->, xshift=1.5cm] (1,0)--(2,0);
}\,
\,\tikz[baseline=-.6ex, scale=.1]{
\draw [thin, dashed] (0,0) circle [radius=5];
\draw[white, double=black, double distance=0.4pt, ultra thick]
(180:5) to[out=east, in=west] (0,-3) to[out=east, in=west](0:5);
\draw (0:0) -- (90:5); 
\draw[white, double=black, double distance=0.4pt, line width=2.4pt] (0:0) -- (210:5); 
\draw[white, double=black, double distance=0.4pt, line width=2.4pt] (0:0) -- (-30:5);
}\, .
\end{align*}
For any objects $\delta$ and $\epsilon$ in $\TL$, 
we define 
\[
c_{{\delta},{\epsilon}}
=
\,\tikz[baseline=-.6ex, scale=.1]{
\draw (-45:4) -- (135:4);
\draw[white, double=black, double distance=0.4pt, line width=2.4pt] 
(-135:4) -- (45:4);
\node at (135:4)[above]{$\scriptstyle{\epsilon}$};
\node at (45:4)[above]{$\scriptstyle{\delta}$};
\node at (-135:4)[below]{$\scriptstyle{\bar{\delta}}$};
\node at (-45:4)[below]{$\scriptstyle{\bar{\epsilon}}$};
}\,
\in\operatorname{\TL}(\delta\epsilon,\epsilon\delta), 
\quad
c_{{\delta},{\epsilon}}^{-1}
=
\,\tikz[baseline=-.6ex, scale=.1]{
\draw (-135:4) -- (45:4);
\draw[white, double=black, double distance=0.4pt, line width=2.4pt] (-45:4) -- (135:4);
\node at (135:4)[above]{$\scriptstyle{\delta}$};
\node at (45:4)[above]{$\scriptstyle{\epsilon}$};
\node at (-135:4)[below]{$\scriptstyle{\bar{\epsilon}}$};
\node at (-45:4)[below]{$\scriptstyle{\bar{\delta}}$};
}\,
\in\operatorname{\TL}(\epsilon\delta,\delta\epsilon),
\]
where an edge labeled by $\delta$ (resp.~$\epsilon$) mean an embedding of $\mathbf{1}_{\delta}$ (resp.~$\mathbf{1}_{\epsilon}$) along it with the same over/under information at every crossings. 
The invariance under the Reidemeister moves (R1)--(R4) provides the invariance under (R1)--(R3) for any labeled edges. 
By the same reason, we can slide $A_2$ clasps across over/under crossings, namely, 
\begin{align}
&c_{\delta,\epsilon}(\mathbf{1}_{\delta}\otimes P_{\epsilon}^{\epsilon})
=(P_{\epsilon}^{\epsilon}\otimes\mathbf{1}_{\delta})c_{\delta,\epsilon}, 
&&c_{\delta,\epsilon}(P_{\delta}^{\delta}\otimes \mathbf{1}_{\epsilon})
=(\mathbf{1}_{\epsilon}\otimes P_{\delta}^{\delta})c_{\delta,\epsilon},\label{A2slide}\\
&c_{\delta,\epsilon}^{-1}(\mathbf{1}_{\epsilon}\otimes P_{\delta}^{\delta})
=(P_{\delta}^{\delta}\otimes\mathbf{1}_{\epsilon})c_{\delta,\epsilon}^{-1}, 
&&c_{\delta,\epsilon}^{-1}(P_{\epsilon}^{\epsilon}\otimes \mathbf{1}_{\delta})
=(\mathbf{1}_{\delta}\otimes P_{\epsilon}^{\epsilon})c_{\delta,\epsilon}^{-1}.\notag
\end{align}

\section{A categorification of $\CK{k,l}(x,y)$}
Let us briefly recall the definition of the {\em Karoubi envelope} and the {\em split Grothendieck group}.

\begin{DEF}[The Karoubi envelope]
The {\em Karoubi envelope} of a category $\mathcal{A}$, denoted by $\operatorname{Kar}(\mathcal{A})$, is defined as follows:
\begin{itemize}
\item Objects in $\operatorname{Kar}(\mathcal{A})$ is pairs $(X,f)$ of objects $X$ in $\mathcal{A}$ and idempotents $f\in\operatorname{Hom}_{\mathcal{A}}(X,X)$.
\item Morphisms in $\operatorname{Hom}_{\operatorname{Kar}(\mathcal{A})}((X,f),(Y,g))$ is morphisms $\phi\in\operatorname{Hom}_{\mathcal{A}}(X,Y)$ satisfying $g\circ\phi\circ f=\phi$ for any objexts $(X,f)$ and $(Y,g)$.
\end{itemize}
\end{DEF}
We remark that the identity in $\operatorname{Hom}_{\operatorname{Kar}(\mathcal{A})}((X,f),(X,f))$ is given by $f$.
If $\mathcal{A}$ is monoidal, then the Karoubi envelope $\operatorname{Kar}(\mathcal{A})$ inherits a tensor product with $(X,f)\otimes(Y,g)=(X\otimes Y,f\otimes g)$.

\begin{DEF}[The split Grothendieck group]
The {\em split Grothendieck group} of an additive category $\mathcal{A}$ with $\oplus$ is the abelian group $K_0(\mathcal{A})$ generated by isomorphism classes $\langle X \rangle$ of objects $X$ in $\mathcal{A}$ modulo the relations $\langle X_1\oplus X_2\rangle=\langle X_1\rangle +\langle X_2\rangle$. 
If $\mathcal{A}$ is monoidal, that is $\mathcal{A}$ equipped with a tensor $\otimes$ and the identity object $e$, then $K_0(\mathcal{A})$ inherits a ring structure with the unit $\langle e\rangle$ and the multiplication $\langle X\otimes Y\rangle=\langle X\rangle\cdot\langle Y\rangle$.
\end{DEF}

In this section, 
we consider the pairs $\{(\epsilon, P_{\epsilon}^{\epsilon})\}$ in the Karoubi envelope $\Kar$ of the $A_2$ spider and its split Grothendieck group.
Firstly, 
it is easy to see that we can take a standard representatives for the isomorphism class of $\{(\epsilon, P_{\epsilon}^{\epsilon})\}$.
\begin{LEM}
For any $\epsilon\in\TL^{(k,l)}$,
$(\epsilon, P_{\epsilon}^{\epsilon})\cong ({+}^k{-}^l, P_{{+}^k{-}^l}^{{+}^k{-}^l})$ in $\Kar$.
\end{LEM}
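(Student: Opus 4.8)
The plan is to read off a pair of mutually inverse isomorphisms directly from the family $\{P_{\alpha}^{\beta}\}$ produced in Proposition~\ref{generalprop}, using its composition law $P_{\beta}^{\gamma}P_{\alpha}^{\beta}=P_{\alpha}^{\gamma}$ as the only substantive input. Write $\mu={+}^k{-}^l$ for brevity. Recall that an isomorphism $(\epsilon,P_{\epsilon}^{\epsilon})\cong(\mu,P_{\mu}^{\mu})$ in $\Kar$ amounts to morphisms $\phi\in\operatorname{Hom}_{\Kar}((\epsilon,P_{\epsilon}^{\epsilon}),(\mu,P_{\mu}^{\mu}))$ and $\psi\in\operatorname{Hom}_{\Kar}((\mu,P_{\mu}^{\mu}),(\epsilon,P_{\epsilon}^{\epsilon}))$ whose two composites equal the identities $P_{\epsilon}^{\epsilon}$ and $P_{\mu}^{\mu}$ of the respective objects. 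I will take the off-diagonal clasps
\[
\phi=P_{\epsilon}^{\mu}\in\TL(\epsilon,\mu),\qquad \psi=P_{\mu}^{\epsilon}\in\TL(\mu,\epsilon).
\]

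First I would check that $\phi$ and $\psi$ are legitimate morphisms of $\Kar$, that is, $P_{\mu}^{\mu}\phi P_{\epsilon}^{\epsilon}=\phi$ and $P_{\epsilon}^{\epsilon}\psi P_{\mu}^{\mu}=\psi$. Each follows from two applications of Proposition~\ref{generalprop}~(1); for instance $P_{\mu}^{\mu}P_{\epsilon}^{\mu}P_{\epsilon}^{\epsilon}=P_{\mu}^{\mu}P_{\epsilon}^{\mu}=P_{\epsilon}^{\mu}$, and symmetrically for $\psi$. Then I would compute the two composites, again by Proposition~\ref{generalprop}~(1),
\[
\psi\circ\phi=P_{\mu}^{\epsilon}P_{\epsilon}^{\mu}=P_{\epsilon}^{\epsilon},\qquad
\phi\circ\psi=P_{\epsilon}^{\mu}P_{\mu}^{\epsilon}=P_{\mu}^{\mu},
\]
which are precisely the identity endomorphisms of $(\epsilon,P_{\epsilon}^{\epsilon})$ and $(\mu,P_{\mu}^{\mu})$ in $\Kar$. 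Hence $\phi$ and $\psi$ are mutually inverse and the lemma follows.

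I do not expect a genuine obstacle here: all of the diagrammatic content has been front-loaded into Proposition~\ref{generalprop}, whose family $\{P_{\alpha}^{\beta}\}$ functions as a system of matrix units interpolating between the idempotents $P_{\epsilon}^{\epsilon}$ for $\epsilon\in\TL^{(k,l)}$. The only point demanding care is the bookkeeping of source and target labels, so that each invocation of $P_{\beta}^{\gamma}P_{\alpha}^{\beta}=P_{\alpha}^{\gamma}$ is made with the correct $\alpha,\beta,\gamma$; no further skein reductions are needed.
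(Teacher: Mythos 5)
Your proposal is correct and follows essentially the same route as the paper: both take the off-diagonal clasps $P_{\epsilon}^{{+}^k{-}^l}$ and $P_{{+}^k{-}^l}^{\epsilon}$ as the candidate isomorphisms, verify that they are morphisms in $\Kar$ via the composition law of Proposition~\ref{generalprop}, and conclude that the two composites are the identity idempotents. No discrepancy to report.
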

\begin{proof}
$P_{\epsilon}^{{+}^k{-}^l}$ and $P_{{+}^k{-}^l}^{\epsilon}$ give an isomorphism between $(\epsilon, P_{\epsilon}^{\epsilon})$ and $({+}^k{-}^l, P_{{+}^k{-}^l}^{{+}^k{-}^l})$.
In fact, 
Proposition~\ref{generalprop} shows
$P_{{+}^k{-}^l}^{{+}^k{-}^l}P_{\epsilon}^{{+}^k{-}^l}P_{\epsilon}^{\epsilon}=P_{\epsilon}^{{+}^k{-}^l}$ 
and 
$P_{\epsilon}^{\epsilon}P_{{+}^k{-}^l}^{\epsilon}P_{{+}^k{-}^l}^{{+}^k{-}^l}=P_{{+}^k{-}^l}^{\epsilon}$.
Thus, $P_{\epsilon}^{{+}^k{-}^l}$ and $P_{{+}^k{-}^l}^{\epsilon}$ are mophisms in $\Kar$.
By the same reason, $P_{{+}^k{-}^l}^{\epsilon}P_{\epsilon}^{{+}^k{-}^l}=P_{\epsilon}^{\epsilon}$ and $P_{\epsilon}^{{+}^k{-}^l}P_{{+}^k{-}^l}^{\epsilon}=P_{{+}^k{-}^l}^{{+}^k{-}^l}$.
\end{proof}

The multiplication in $K_0(\Kar)$ is commutative because of a property (\ref{A2slide}) of a braiding.
\begin{LEM}
For any objects $\epsilon$ and $\delta$ in $\TL$, $P_{\epsilon}^{\epsilon}\otimes P_{\delta}^{\delta}\cong P_{\delta}^{\delta}\otimes P_{\epsilon}^{\epsilon}$ by $c_{\epsilon,\delta}$.
\end{LEM}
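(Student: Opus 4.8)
The plan is to show that the braiding $c_{\epsilon,\delta}$ itself, once it is cut down by the relevant idempotents, furnishes the isomorphism, with $c_{\epsilon,\delta}^{-1}$ playing the role of its inverse. Recall that the two objects under comparison are $(\epsilon\otimes\delta,\,P_{\epsilon}^{\epsilon}\otimes P_{\delta}^{\delta})$ and $(\delta\otimes\epsilon,\,P_{\delta}^{\delta}\otimes P_{\epsilon}^{\epsilon})$ in $\Kar$, whose identity morphisms are the tensor idempotents $P_{\epsilon}^{\epsilon}\otimes P_{\delta}^{\delta}$ and $P_{\delta}^{\delta}\otimes P_{\epsilon}^{\epsilon}$ respectively. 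A morphism $\phi$ between them must satisfy $(P_{\delta}^{\delta}\otimes P_{\epsilon}^{\epsilon})\,\phi\,(P_{\epsilon}^{\epsilon}\otimes P_{\delta}^{\delta})=\phi$. Note first that $P_{\epsilon}^{\epsilon}\otimes P_{\delta}^{\delta}$ is genuinely idempotent, since $P_{\epsilon}^{\epsilon}$ and $P_{\delta}^{\delta}$ are idempotent by Proposition~\ref{eeprop}~(1) and the tensor product is functorial.

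The first step is to upgrade the two sliding relations in~(\ref{A2slide}) to a single intertwining identity for the full tensor idempotent. After relabelling $\epsilon\leftrightarrow\delta$, those relations read $c_{\epsilon,\delta}(\mathbf{1}_{\epsilon}\otimes P_{\delta}^{\delta})=(P_{\delta}^{\delta}\otimes\mathbf{1}_{\epsilon})c_{\epsilon,\delta}$ and $c_{\epsilon,\delta}(P_{\epsilon}^{\epsilon}\otimes\mathbf{1}_{\delta})=(\mathbf{1}_{\delta}\otimes P_{\epsilon}^{\epsilon})c_{\epsilon,\delta}$. Writing $P_{\epsilon}^{\epsilon}\otimes P_{\delta}^{\delta}=(P_{\epsilon}^{\epsilon}\otimes\mathbf{1}_{\delta})(\mathbf{1}_{\epsilon}\otimes P_{\delta}^{\delta})$ and applying each half of~(\ref{A2slide}) to the appropriate factor in turn yields
\[
c_{\epsilon,\delta}\bigl(P_{\epsilon}^{\epsilon}\otimes P_{\delta}^{\delta}\bigr)
=\bigl(P_{\delta}^{\delta}\otimes P_{\epsilon}^{\epsilon}\bigr)c_{\epsilon,\delta}.
\]

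Given this identity I would set $\phi=c_{\epsilon,\delta}(P_{\epsilon}^{\epsilon}\otimes P_{\delta}^{\delta})$ and $\psi=c_{\epsilon,\delta}^{-1}(P_{\delta}^{\delta}\otimes P_{\epsilon}^{\epsilon})$. That these are morphisms in $\Kar$ is a direct check: using the intertwining identity together with idempotency of $P_{\epsilon}^{\epsilon}\otimes P_{\delta}^{\delta}$ one finds $(P_{\delta}^{\delta}\otimes P_{\epsilon}^{\epsilon})\,\phi\,(P_{\epsilon}^{\epsilon}\otimes P_{\delta}^{\delta})=\phi$, and symmetrically for $\psi$. Finally I would verify that $\phi$ and $\psi$ are mutually inverse: substituting the definitions and pushing the middle idempotent through $c_{\epsilon,\delta}$ by the intertwining identity, the braiding cancels its inverse and one is left with $(P_{\epsilon}^{\epsilon}\otimes P_{\delta}^{\delta})^{2}=P_{\epsilon}^{\epsilon}\otimes P_{\delta}^{\delta}$, which is exactly the identity of $(\epsilon\otimes\delta,\,P_{\epsilon}^{\epsilon}\otimes P_{\delta}^{\delta})$ in $\Kar$; the composite $\phi\psi$ is handled identically after exchanging the roles of $\epsilon$ and $\delta$.

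I expect no genuine obstacle here: all the real content is already contained in the sliding relations~(\ref{A2slide}), which encode the compatibility of the braiding with the $A_2$ clasps, so the argument is pure bookkeeping with idempotents. The only point deserving care is the order in which the two factors of the tensor idempotent are slid across the crossing, since $P_{\epsilon}^{\epsilon}$ and $P_{\delta}^{\delta}$ act on disjoint groups of strands and each half of~(\ref{A2slide}) must be invoked on its own block.
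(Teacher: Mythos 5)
Your proposal is correct and is exactly the argument the paper intends: the lemma is stated without proof immediately after the remark that commutativity of the multiplication in $K_0(\Kar)$ follows from the sliding relations~(\ref{A2slide}), and your construction of $\phi=c_{\epsilon,\delta}(P_{\epsilon}^{\epsilon}\otimes P_{\delta}^{\delta})$ and $\psi=c_{\epsilon,\delta}^{-1}(P_{\delta}^{\delta}\otimes P_{\epsilon}^{\epsilon})$, together with the intertwining identity obtained by sliding each clasp across the crossing separately, is precisely the bookkeeping the author leaves to the reader. No gap.
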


Let us denote objects $({+}^k{-}^l, P_{{+}^k{-}^l}^{{+}^k{-}^l})$ in $\Kar$ by $P_{(k,l)}$ for $k,l\geq 1$. 
We denote the pair $(\epsilon_0, D_{\emptyset})$ of the empty sign and the empty web by $P_{(0,0)}$. 
The split Grothendieck group $K_0(\Kar)$ is a ring with with the unit $1=\langle P_{(0,0)}\rangle$. 
We denote $\langle ({+},\mathbf{1}_{+})\rangle$ and $\langle ({-},\mathbf{1}_{-})\rangle$ by $X$ and $Y$ in $K_0(\Kar)$, respectively. 

We will show the isomorphism classes $\langle P_{(k,l)}\rangle$ satisfy the recursive formula of the $A_2$ Chebyshev polynomials in $K_0(\Kar)$:
\begin{align}
\langle P_{(1,1)}\rangle&=XY-1,\label{CK1}\\
\langle P_{(k+1,0)}\rangle&=X\langle P_{(k,0)}\rangle-\langle P_{(k-1,1)}\rangle\ \text{ for }\ k\geq 1,\label{CK2}\\
\langle P_{(k+1,l)}\rangle&=X\langle P_{(k,l)}\rangle-\langle P_{(k-1,l+1)}\rangle-\langle P_{(k,l-1)}\rangle\ \text{ for }\ k,l\geq 1,\label{CK3}\\
\langle P_{(0,l+1)}\rangle&=Y\langle P_{(0,l)}\rangle-\langle P_{(1,l-1)}\rangle\ \text{ for }\ l\geq 1,\label{CK4}\\
\langle P_{(k,l+1)}\rangle&=Y\langle P_{(k,l)}\rangle-\langle P_{(k+1,l-1)}\rangle-\langle P_{(k-1,l)}\rangle\ \text{ for }\ k,l\geq 1.\label{CK5}
\end{align}
We only have to prove (\ref{CK1})--(\ref{CK3}) because of $(P_{{+}^k{-}^l}^{{+}^k{-}^l})^{*}\cong P_{{-}^k{+}^l}^{{-}^k{+}^l}$.

We use the following well-known fact about an additive category, see~\cite{MacLane98, KashiwaraShapira06}, for example.
\begin{LEM}\label{projincl}
Let $\mathcal{A}$ be an additive category and $X_1,X_2,Y$ objects in $\mathcal{A}$.
$Y\cong X_1\oplus X_2$ if and only if there exists morphisms $p_1\colon Y\to X_1$, $p_2\colon Y\to X_2$, $\iota_1\colon X_1\to Y$, and $\iota_2\colon X_2\to Y$ satisfying the following conditions:
\[
\iota_1\circ p_1+\iota_2\circ p_2=\mathbf{1}_{Y},
p_1\circ\iota_2=p_2\circ\iota_1=0,
p_1\circ\iota_1=\mathbf{1}_{X_1}, 
p_2\circ\iota_2=\mathbf{1}_{X_2}.
\]
\end{LEM}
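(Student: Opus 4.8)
The plan is to prove both implications by exhibiting mutually inverse isomorphisms explicitly in terms of the canonical inclusion and projection maps of the biproduct $X_1\oplus X_2$. Recall that in an additive category the direct sum $X_1\oplus X_2$ is a biproduct: it comes equipped with projections $\pi_i\colon X_1\oplus X_2\to X_i$ and inclusions $j_i\colon X_i\to X_1\oplus X_2$ $(i=1,2)$ satisfying $\pi_i\circ j_i=\mathbf{1}_{X_i}$, $\pi_i\circ j_k=0$ for $i\neq k$, and $j_1\circ\pi_1+j_2\circ\pi_2=\mathbf{1}_{X_1\oplus X_2}$. These standard biproduct relations are exactly the model for the relations in the statement, and the whole argument is the bookkeeping that transports them across an isomorphism.

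For the forward direction I would assume an isomorphism $\phi\colon Y\to X_1\oplus X_2$ with inverse $\psi$, and set $p_i=\pi_i\circ\phi$ and $\iota_i=\psi\circ j_i$. Each of the four required relations then follows by inserting $\phi\circ\psi=\mathbf{1}$ (resp.\ $\psi\circ\phi=\mathbf{1}$) between the appropriate factors and invoking the biproduct relations above; for instance $p_i\circ\iota_k=\pi_i\circ(\phi\circ\psi)\circ j_k=\pi_i\circ j_k$, while $\iota_1\circ p_1+\iota_2\circ p_2=\psi\circ(j_1\circ\pi_1+j_2\circ\pi_2)\circ\phi=\psi\circ\phi=\mathbf{1}_Y$.

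For the converse, given morphisms $p_i,\iota_i$ satisfying the four relations, I would construct an isomorphism directly from the biproduct data by setting $\phi=j_1\circ p_1+j_2\circ p_2\colon Y\to X_1\oplus X_2$ and $\psi=\iota_1\circ\pi_1+\iota_2\circ\pi_2\colon X_1\oplus X_2\to Y$. Expanding $\psi\circ\phi$ and $\phi\circ\psi$ and applying the hypotheses $p_i\circ\iota_k=\delta_{ik}\mathbf{1}_{X_i}$ together with $\pi_i\circ j_k=\delta_{ik}\mathbf{1}_{X_i}$ makes the cross terms vanish, yielding $\psi\circ\phi=\iota_1\circ p_1+\iota_2\circ p_2=\mathbf{1}_Y$ and $\phi\circ\psi=j_1\circ\pi_1+j_2\circ\pi_2=\mathbf{1}_{X_1\oplus X_2}$, so $\phi$ is the desired isomorphism.

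There is no serious obstacle here; the only point requiring care is keeping the product side (the maps $\pi_i$) and the coproduct side (the maps $j_i$) straight, since both are needed and it is precisely the interplay of the two families of idempotent identities that kills the cross terms. The argument is pure diagram bookkeeping and uses nothing about the $A_2$ spider; its role in the paper is simply to convert the idempotent decompositions of $P_{\epsilon}^{\epsilon}$ appearing later into genuine direct-sum statements in $K_0(\Kar)$.
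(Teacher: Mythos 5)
Your proof is correct. The paper itself gives no proof of this lemma at all---it simply records it as a well-known fact and points to MacLane and Kashiwara--Schapira---and your argument is precisely the standard one those references contain: transport the biproduct identities $\pi_i\circ j_k=\delta_{ik}\mathbf{1}_{X_i}$ and $j_1\circ\pi_1+j_2\circ\pi_2=\mathbf{1}_{X_1\oplus X_2}$ along an isomorphism for the forward direction, and assemble $\phi=j_1\circ p_1+j_2\circ p_2$, $\psi=\iota_1\circ\pi_1+\iota_2\circ\pi_2$ for the converse, with bilinearity of composition killing the cross terms. Nothing is missing; the only implicit ingredients are that an additive category has all finite biproducts (so $X_1\oplus X_2$ exists) and that composition distributes over addition of morphisms, both of which are part of the definition.
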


\begin{PROP}\label{pfCK1}
$\langle P_{(1,1)}\rangle=XY-1$
\end{PROP}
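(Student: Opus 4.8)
The plan is to read the equation $\langle P_{(1,1)}\rangle = XY - 1$ as a direct-sum decomposition in $\Kar$ and then pass to $K_0$. Since the tensor product in $\Kar$ is computed componentwise, $XY = \langle({+},\mathbf{1}_{+})\rangle\langle({-},\mathbf{1}_{-})\rangle = \langle({+}{-},\mathbf{1}_{{+}{-}})\rangle$, while $1 = \langle P_{(0,0)}\rangle$. Thus it suffices to produce an isomorphism $({+}{-},\mathbf{1}_{{+}{-}}) \cong P_{(1,1)} \oplus P_{(0,0)}$ in $\Kar$, which I would establish by exhibiting the four structure maps of Lemma~\ref{projincl}.

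Concretely, I set $Y = ({+}{-},\mathbf{1}_{{+}{-}})$, $X_1 = P_{(1,1)} = ({+}{-},P_{{+}{-}}^{{+}{-}})$, and $X_2 = P_{(0,0)} = (\epsilon_0, D_\emptyset)$, and I choose $\iota_1 = p_1 = P_{{+}{-}}^{{+}{-}}$ together with $\iota_2 = b^{{+}{-}}$ and $p_2 = \tfrac{1}{[3]}\,d_{{+}{-}}$. Each of these is a legitimate morphism in $\Kar$ because $P_{{+}{-}}^{{+}{-}}$ is idempotent and the defining cup/cap relations of the Karoubi envelope are then automatic. The idempotency conditions $p_1\iota_1 = P_{{+}{-}}^{{+}{-}}$ and $p_2\iota_2 = D_\emptyset$ reduce, respectively, to $(P_{{+}{-}}^{{+}{-}})^2 = P_{{+}{-}}^{{+}{-}}$ and to the evaluation of a single closed loop, $d_{{+}{-}}b^{{+}{-}} = [3]\,D_\emptyset$. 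The orthogonality relations $p_1\iota_2 = P_{{+}{-}}^{{+}{-}}b^{{+}{-}} = 0$ and $p_2\iota_1 = \tfrac{1}{[3]}\,d_{{+}{-}}P_{{+}{-}}^{{+}{-}} = 0$ are exactly the clasp-killing property of Proposition~\ref{doubleproj}~(2) specialized to $k=l=1$ (where $a=b=0$).

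The one remaining and essential identity is the completeness relation $\iota_1 p_1 + \iota_2 p_2 = \mathbf{1}_{{+}{-}}$, that is, $P_{{+}{-}}^{{+}{-}} + \tfrac{1}{[3]}\,b^{{+}{-}}d_{{+}{-}} = \mathbf{1}_{{+}{-}}$. This is precisely the $k=l=1$ case of Definition~\ref{OYdef}: the sum over $i$ retains only $i=0$ and $i=1$, the outer strands labeled $k-i$ and $l-i$ collapse, and since $[1]=1$ the coefficients are $1$ and $-\tfrac{[1][1]}{[3]} = -\tfrac{1}{[3]}$, so that indeed $P_{{+}{-}}^{{+}{-}} = \mathbf{1}_{{+}{-}} - \tfrac{1}{[3]}\,b^{{+}{-}}d_{{+}{-}}$.

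I expect the main (and essentially only) obstacle to be the careful identification of the $i=1$ web in Definition~\ref{OYdef} with the turnback $b^{{+}{-}}d_{{+}{-}}$ factoring through the empty object, while keeping the orientations and the normalization $1/[3]$ consistent; once this is pinned down, the four hypotheses of Lemma~\ref{projincl} hold immediately. With the decomposition $({+}{-},\mathbf{1}_{{+}{-}}) \cong P_{(1,1)} \oplus P_{(0,0)}$ in hand, applying $\langle-\rangle$ yields $XY = \langle P_{(1,1)}\rangle + 1$ in $K_0(\Kar)$, which rearranges to the claimed $\langle P_{(1,1)}\rangle = XY - 1$.
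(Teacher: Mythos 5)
Your proposal is correct and follows essentially the same route as the paper: the same four structure maps $p_1=\iota_1=P_{{+}{-}}^{{+}{-}}$, $\iota_2=b^{{+}{-}}$, $p_2=\tfrac{1}{[3]}d_{{+}{-}}$, the same completeness identity $P_{{+}{-}}^{{+}{-}}+\tfrac{1}{[3]}b^{{+}{-}}d_{{+}{-}}=\mathbf{1}_{{+}{-}}$ from Definition~\ref{OYdef}, the same use of Proposition~\ref{doubleproj}~(2) for orthogonality, and the same loop evaluation $d_{{+}{-}}b^{{+}{-}}=[3]$. The only cosmetic difference is that you present the middle object directly as $({+}{-},\mathbf{1}_{{+}{-}})$ while the paper writes its idempotent as the sum $P_{{+}{-}}^{{+}{-}}+\tfrac{1}{[3]}b^{{+}{-}}d_{{+}{-}}$, which is the same thing by the identity you both invoke.
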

\begin{proof}
By Definition~{\ref{OYdef}}, 
\[
P_{{+}{-}}^{{+}{-}}+\frac{1}{[3]}b^{{+}{-}}d^{{+}{-}}=\mathbf{1}_{+}\otimes\mathbf{1}_{-}.
\]
Let us prove $({+}{-},P_{{+}{-}}^{{+}{-}}+\frac{1}{[3]} b^{{+}{-}}d_{{+}{-}})\cong P_{(1,1)} \oplus P_{(0,0)}$.
It is only necessary to construct projections $p_1\colon ({+}{-},P_{{+}{-}}^{{+}{-}}+\frac{1}{[3]} b^{{+}{-}}d_{{+}{-}})\to P_{(1,1)}$ and $p_2\colon ({+}{-},P_{{+}{-}}^{{+}{-}}+\frac{1}{[3]} b^{{+}{-}}d_{{+}{-}})\to P_{(0,0)}$, and inclusions $\iota_1\colon P_{(1,1)}\to({+}{-},P_{{+}{-}}^{{+}{-}}+\frac{1}{[3]} b^{{+}{-}}d_{{+}{-}})$ and $\iota_2\colon P_{(0,0)}\to({+}{-},P_{{+}{-}}^{{+}{-}}+\frac{1}{[3]} b^{{+}{-}}d_{{+}{-}})$ satisfying the conditions in Lemma~\ref{projincl}.
These projections are given by $p_1=P_{{+}{-}}^{{+}{-}}$ and $p_2=\frac{1}{[3]}d_{{+}{-}}$ because 
\begin{align*}
P_{{+}{-}}^{{+}{-}}(P_{{+}{-}}^{{+}{-}}+\frac{1}{[3]}b^{{+}{-}}d_{{+}{-}})
=(P_{{+}{-}}^{{+}{-}})^2+\frac{1}{[3]}(P_{{+}{-}}^{{+}{-}}b^{{+}{-}})d_{{+}{-}}
=P_{{+}{-}}^{{+}{-}}
\end{align*}
by Proposition~\ref{doubleproj}, and
\begin{align*}
\frac{1}{[3]}d_{{+}{-}}(P_{{+}{-}}^{{+}{-}}+\frac{1}{[3]}b^{{+}{-}}d_{{+}{-}})
=\frac{1}{[3]}d_{{+}{-}}P_{{+}{-}}^{{+}{-}}+\frac{1}{[3]^2}d_{{+}{-}}b^{{+}{-}}d_{{+}{-}}
=\frac{1}{[3]}d_{{+}{-}}
\end{align*}
by Proposition~\ref{doubleproj} and 
$d_{{+}{-}}b^{{+}{-}}
=\,\tikz[baseline=-.6ex, scale=.6]{
\draw[-<-=.5] (0,0) circle [radius=.3];
}\,
=[3]$.
These inclusions are given by $\iota_1=P_{{+}{-}}^{{+}{-}}$ and $\iota_2=b^{{+}{-}}$ because
\begin{align*}
(P_{{+}{-}}^{{+}{-}}+\frac{1}{[3]}b^{{+}{-}}d_{{+}{-}})P_{{+}{-}}^{{+}{-}}
=(P_{{+}{-}}^{{+}{-}})^2+\frac{1}{[3]}(b^{{+}{-}})d_{{+}{-}}P_{{+}{-}}^{{+}{-}}
=P_{{+}{-}}^{{+}{-}}
\end{align*}
and
\begin{align*}
(P_{{+}{-}}^{{+}{-}}+\frac{1}{[3]}b^{{+}{-}}d_{{+}{-}})b^{{+}{-}}
=P_{{+}{-}}^{{+}{-}}b^{{+}{-}}+\frac{1}{[3]}b^{{+}{-}}d_{{+}{-}}b^{{+}{-}}
=b^{{+}{-}}.
\end{align*}
In fact, it is easily to see that
\begin{align*}
&\iota_1\circ p_2=\iota_2\circ p_1=0, 
&&\iota_1\circ p_1=P_{{+}{-}}^{{+}{-}}, 
&&\iota_2\circ p_2=\frac{1}{[3]}b^{{+}{-}}d_{{+}{-}},\\
&p_1\circ\iota_2=p_2\circ\iota_1=0, 
&&p_1\circ\iota_1=P_{{+}{-}}^{{+}{-}}, 
&&p_2\circ\iota_2=D_{\emptyset}.
\end{align*}
Thus, we obtain $P_{(1,1)} \oplus P_{(0,0)}\cong ({+}{-},P_{{+}{-}}^{{+}{-}}+\frac{1}{[3]} b^{{+}{-}}d_{{+}{-}})$.
In $K_0(\Kar)$,
it is interpreted as $\langle P_{(1,1)}\rangle+1=\langle ({+}{-},P_{{+}{-}}^{{+}{-}}+\frac{1}{[3]} b^{{+}{-}}d_{{+}{-}})\rangle=XY$.
\end{proof}

\begin{PROP}\label{pfCK2}
For any positive integer $k$,
$\langle P_{(k+1,0)}\rangle=X\langle P_{(k,0)}\rangle-\langle P_{(k-1,1)}\rangle$.
\end{PROP}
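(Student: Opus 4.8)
The plan is to upgrade the asserted identity in $K_0(\Kar)$ to a direct-sum decomposition in $\Kar$, in the spirit of the proof of Proposition~\ref{pfCK1}. Because the product on $K_0(\Kar)$ is induced by $\otimes$ and is commutative, one has $X\langle P_{(k,0)}\rangle=\langle({+}^{k+1},P_{{+}^k}^{{+}^k}\otimes\mathbf{1}_{+})\rangle$, so it is enough to establish
\[
({+}^{k+1},P_{{+}^k}^{{+}^k}\otimes\mathbf{1}_{+})\cong P_{(k+1,0)}\oplus P_{(k-1,1)}\quad\text{in }\Kar .
\]
Passing to $K_0(\Kar)$ and rearranging then gives $\langle P_{(k+1,0)}\rangle=X\langle P_{(k,0)}\rangle-\langle P_{(k-1,1)}\rangle$.

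The decomposition of the identity of $({+}^{k+1},P_{{+}^k}^{{+}^k}\otimes\mathbf{1}_{+})$ is supplied by the recursive definition of the clasp. Replacing $k$ by $k+1$ in Definition~\ref{singledef} yields
\[
P_{{+}^k}^{{+}^k}\otimes\mathbf{1}_{+}=P_{{+}^{k+1}}^{{+}^{k+1}}+\tfrac{[k]}{[k+1]}E ,
\]
where $E$ is the correction web obtained by fusing the two rightmost strands through the trivalent vertices $t_{{+}{+}}^{-}$ and $t_{-}^{{+}{+}}$, so that its waist is the object ${+}^{k-1}{-}$. I would apply Lemma~\ref{projincl} with $p_1=\iota_1=P_{{+}^{k+1}}^{{+}^{k+1}}$ for the summand $P_{(k+1,0)}$, and with
\[
\iota_2=(P_{{+}^k}^{{+}^k}\otimes\mathbf{1}_{+})(\mathbf{1}_{{+}^{k-1}}\otimes t_{-}^{{+}{+}})P_{{+}^{k-1}{-}}^{{+}^{k-1}{-}},\qquad
p_2=N\,P_{{+}^{k-1}{-}}^{{+}^{k-1}{-}}(\mathbf{1}_{{+}^{k-1}}\otimes t_{{+}{+}}^{-})(P_{{+}^k}^{{+}^k}\otimes\mathbf{1}_{+})
\]
for the summand $P_{(k-1,1)}$, where $N$ is a constant to be fixed.

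Most conditions of Lemma~\ref{projincl} are then routine. The cross terms vanish: after absorbing $P_{{+}^k}^{{+}^k}\otimes\mathbf{1}_{+}$ into $P_{{+}^{k+1}}^{{+}^{k+1}}$ by Proposition~\ref{singleproj}~(1), the clasp $P_{{+}^{k+1}}^{{+}^{k+1}}$ kills the adjacent $t_{-}^{{+}{+}}$ in $p_1\iota_2$ and the adjacent $t_{{+}{+}}^{-}$ in $p_2\iota_1$ by Proposition~\ref{singleproj}~(2). The relation $p_1\iota_1=P_{{+}^{k+1}}^{{+}^{k+1}}$ is idempotency, and the completeness relation $\iota_1 p_1+\iota_2 p_2=P_{{+}^k}^{{+}^k}\otimes\mathbf{1}_{+}$ is exactly the recursion above once one identifies $\iota_2 p_2=\tfrac{[k]}{[k+1]}E$; the clasp $P_{{+}^{k-1}{-}}^{{+}^{k-1}{-}}$ appearing at the waist of $\iota_2 p_2$ is absorbed by the surrounding boxes via the absorption and turnback-vanishing properties of Proposition~\ref{doubleproj}.

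The main obstacle is the normalization, where the two appearances of $N$ must be reconciled. The condition $p_2\iota_2=\mathbf{1}_{P_{(k-1,1)}}=P_{{+}^{k-1}{-}}^{{+}^{k-1}{-}}$ forces $N$ to be the reciprocal of the scalar $\theta_k$ defined by
\[
P_{{+}^{k-1}{-}}^{{+}^{k-1}{-}}(\mathbf{1}_{{+}^{k-1}}\otimes t_{{+}{+}}^{-})(P_{{+}^k}^{{+}^k}\otimes\mathbf{1}_{+})(\mathbf{1}_{{+}^{k-1}}\otimes t_{-}^{{+}{+}})P_{{+}^{k-1}{-}}^{{+}^{k-1}{-}}=\theta_k\,P_{{+}^{k-1}{-}}^{{+}^{k-1}{-}} ,
\]
whereas completeness forces $N=[k]/[k+1]$. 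Proving $\theta_k=[k+1]/[k]$ is the technical heart: one first reduces the two-gon produced by $t_{{+}{+}}^{-}t_{-}^{{+}{+}}$ using the relation with coefficient $[2]$ from the defining relations of $\TL$, and then collapses the $P_{{+}^k}^{{+}^k}$ box onto $P_{{+}^{k-1}{-}}^{{+}^{k-1}{-}}$ by clasp absorption, the quantum-integer ratio emerging precisely as the recursion coefficient. The base case $k=1$, where the waist is the single strand ${-}$, $P_{(0,1)}=({-},\mathbf{1}_{-})$, and $\theta_1=t_{{+}{+}}^{-}t_{-}^{{+}{+}}=[2]$, already displays this mechanism and confirms $N=[1]/[2]$.
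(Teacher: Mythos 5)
Your proposal is correct and follows essentially the same route as the paper: it rewrites the clasp recursion as $P_{{+}^k}^{{+}^k}\otimes\mathbf{1}_{+}=P_{{+}^{k+1}}^{{+}^{k+1}}+\tfrac{[k]}{[k+1]}E$ and realizes $({+}^{k+1},P_{{+}^k}^{{+}^k}\otimes\mathbf{1}_{+})\cong P_{(k+1,0)}\oplus P_{(k-1,1)}$ with the same $p_i$, $\iota_i$ (the paper also places the normalization $\tfrac{[k]}{[k+1]}$ in $p_2$). Your identification of the key scalar computation, $\theta_k=[2]-\tfrac{[k-1]}{[k]}=\tfrac{[k+1]}{[k]}$ via the bigon relation and the clasp recursion, is exactly the paper's displayed calculation using $[2][k]=[k+1]+[k-1]$.
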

\begin{proof}
By Definition~\ref{singledef}, 
\begin{align}\label{eqCK2}
 P_{{+}^{k+1}}^{{+}^{k+1}}
+\frac{[k]}{[k+1]}
(P_{{+}^k}^{{+}^k}\otimes\mathbf{1}_{+})
(\mathbf{1}_{{+}^{k-1}}\otimes t_{{-}}^{{+}{+}}t_{{+}{+}}^{-})
(P_{{+}^k}^{{+}^k}\otimes\mathbf{1}_{+})=P_{{+}^k}^{{+}^k}\otimes\mathbf{1}_{+}
\end{align}
Let us denote the LHS of (\ref{eqCK2}) by $f$.
We show that the pair $({+}^{k+1},f)$ is isomorphic to the direct sum $P_{(k+1,0)}\oplus P_{(k-1,1)}$ in $\Kar$. 
We only have to construct projections $p_1\colon ({+}^{k+1},f)\to P_{(k+1,0)}$ and $p_2\colon ({+}^{k+1},f)\to P_{(k-1,1)}$, and inclusions $\iota_1\colon P_{(k+1,0)}\to ({+}^{k+1},f)$ and $\iota_2\colon P_{(k-1,1)}\to ({+}^{k+1},f)$ satisfying (\ref{projincl}).
We confirm that 
\[
p_1=P_{{+}^{k+1}}^{{+}^{k+1}}
\quad\text{and}\quad 
p_2=\frac{[k]}{[k+1]}P_{{+}^{k-1}{-}}^{{+}^{k-1}{-}}(\mathbf{1}_{{+}^{k-1}}\otimes t_{{+}{+}}^{-})(P_{{+}^{k}}^{{+}^{k}}\otimes\mathbf{1}_{+}),
\]
provide projections and
\[
\iota_1=P_{{+}^{k+1}}^{{+}^{k+1}}
\quad\text{and}\quad
\iota_2=(P_{{+}^{k}}^{{+}^{k}}\otimes\mathbf{1}_{+})(\mathbf{1}_{{+}^{k-1}}\otimes t_{{-}}^{{+}{+}})P_{{+}^{k-1}{-}}^{{+}^{k-1}{-}}
\]
 inclusions.
It is easy to see that $p_1$ and $\iota_1$ is a morphism between $({+}^{k+1},f)$ and $P_{(k+1,0)}$. 
We show that $p_2$ is a morphism in $\Kar$, that is, 
\begin{align*}
P_{{+}^{k-1}{-}}^{{+}^{k-1}{-}}P_{{+}^{k-1}{-}}^{{+}^{k-1}{-}}(\mathbf{1}_{{+}^{k-1}}\otimes t_{{+}{+}}^{-})(P_{{+}^{k}}^{{+}^{k}}\otimes\mathbf{1}_{+})f
=P_{{+}^{k-1}{-}}^{{+}^{k-1}{-}}(\mathbf{1}_{{+}^{k-1}}\otimes t_{{+}{+}}^{-})(P_{{+}^{k}}^{{+}^{k}}\otimes\mathbf{1}_{+}).
\end{align*}
By Proposition~\ref{doubleproj}, the LHS of the above equation is 
\[
\frac{[k]}{[k+1]}P_{{+}^{k-1}{-}}^{{+}^{k-1}{-}}(\mathbf{1}_{{+}^{k-1}}\otimes t_{{+}{+}}^{-})(P_{{+}^{k}}^{{+}^{k}}\otimes\mathbf{1}_{+})
(\mathbf{1}_{{+}^{k-1}}\otimes t_{{-}}^{{+}{+}}t_{{+}{+}}^{-})
(P_{{+}^k}^{{+}^k}\otimes\mathbf{1}_{+}).
\]
We diagrammatically calculate it using Definition~\ref{singledef}:
\begin{align}
\frac{[k]}{[k+1]}
\,\tikz[baseline=-.6ex, scale=.1, yscale=.8, yshift=-2cm]{
\draw[->-=.5] (0,9) -- (0,11);
\draw[-<-=.5] (4,9) -- (4,11);
\draw[->-=.5] (0,4) -- (0,8);
\draw[-<-=.5] (4,6) -- (4,8);
\draw[->-=.5] (2,4) to[out=north, in=west] (4,6);
\draw[->-=.5] (6,4) to[out=north, in=east] (4,6);
\draw (6,3) -- (6,4);
\draw[fill=white] (-2,8) rectangle (7,9);
\node at (0,11) [left]{${\scriptstyle k-1}$};
\node at (4,11) [right]{${\scriptstyle 1}$};
\draw[->-=.5] (1,-6) -- (1,-3);
\draw[->-=.5] (0,-3) -- (0,3);
\draw[-<-=.5] (4,-1) -- (4,1);
\draw[-<-=.7] (2,3) to[out=south, in=west] (4,1);
\draw[-<-=.5] (6,3) to[out=south, in=east] (4,1);
\draw[->-=.7, yscale=-1] (2,3) to[out=south, in=west] (4,1);
\draw[->-=.5, yscale=-1] (6,6) -- (6,3) to[out=south, in=east] (4,1);
\draw[fill=white] (-2,3) rectangle (3,4);
\draw[fill=white, yshift=-3cm] (-2,-.5) rectangle (3,.5);
\node at (0,6) [left]{${\scriptstyle k-1}$};
\node at (1,-6) [left]{${\scriptstyle k}$};
\node at (0,0) [left]{${\scriptstyle k-1}$};
}\,
&=
\frac{[k]}{[k+1]}
\Big(
\,\tikz[baseline=-.6ex, scale=.1, yscale=.8, yshift=-2cm]{
\draw[->-=.5] (0,9) -- (0,11);
\draw[-<-=.5] (4,9) -- (4,11);
\draw[-<-=.5] (4,5) -- (4,8);
\draw[fill=white] (-2,8) rectangle (7,9);
\node at (4,11) [right]{${\scriptstyle 1}$};
\node at (0,11) [left]{${\scriptstyle k-1}$};
\draw[->-=.5] (1,-6) -- (1,-3);
\draw[->-=.5] (0,-3) -- (0,8);
\draw[-<-=.5] (4,-1) -- (4,1);
\draw[->-=.5] (4,1) to[out=west, in=west] (4,5);
\draw[->-=.5] (4,1) to[out=east, in=east] (4,5);
\draw[->-=.7, yscale=-1] (2,3) to[out=south, in=west] (4,1);
\draw[->-=.5, yscale=-1] (6,6) -- (6,3) to[out=south, in=east] (4,1);
\draw[fill=white, yshift=-3cm] (-2,-.5) rectangle (3,.5);
\node at (1,-6) [left]{${\scriptstyle k}$};
\node at (0,3) [left]{${\scriptstyle k-1}$};
}\,
-\frac{[k-1]}{[k]}
\,\tikz[baseline=-.6ex, scale=.1, yscale=.8, yshift=-2cm]{
\draw[->-=.5] (0,9) -- (0,11);
\draw[-<-=.5] (4,9) -- (4,11);
\draw[-<-=.5] (4,5) -- (4,8);
\draw[fill=white] (-2,8) rectangle (7,9);
\node at (4,11) [right]{${\scriptstyle 1}$};
\node at (0,11) [left]{${\scriptstyle k-1}$};
\draw[->-=.5] (1,-6) -- (1,-3);
\draw[->-=.5] (-1,-3) -- (-1,8);
\draw[-<-=.5] (4,-1) -- (4,1);
\draw[-<-=.5] (1,1) -- (4,1);
\draw[-<-=.5] (1,1) -- (1,5);
\draw[->-=.5] (1,5) -- (4,5);
\draw[->-=.5] (1,-3) -- (1,1);
\draw[->-=.5] (1,5) -- (1,8);
\draw[->-=.5] (4,1) to[out=east, in=east] (4,5);
\draw[->-=.7, yscale=-1] (2,3) to[out=south, in=west] (4,1);
\draw[->-=.5, yscale=-1] (6,6) -- (6,3) to[out=south, in=east] (4,1);
\draw[fill=white, yshift=-3cm] (-2,-.5) rectangle (3,.5);
\node at (1,-6) [left]{${\scriptstyle k}$};
\node at (-1,3) [left]{${\scriptstyle k-2}$};
}\,
\Big)\label{p2CK2}\\
&=
\frac{[k]}{[k+1]}
\Big([2]-\frac{[k-1]}{[k]}\Big)
\,\tikz[baseline=-.6ex, scale=.1, yshift=-1cm]{
\draw[->-=.5] (0,5) -- (0,7);
\draw[-<-=.5] (4,5) -- (4,7);
\draw[fill=white] (-2,4) rectangle (7,5);
\node at (4,7) [right]{${\scriptstyle 1}$};
\node at (0,7) [left]{${\scriptstyle k-1}$};
\draw[->-=.5] (1,-6) -- (1,-3);
\draw[->-=.5] (0,-3) -- (0,4);
\draw[-<-=.5] (4,-1) -- (4,4);
\draw[->-=.7, yscale=-1] (2,3) to[out=south, in=west] (4,1);
\draw[->-=.5, yscale=-1] (6,6) -- (6,3) to[out=south, in=east] (4,1);
\draw[fill=white, yshift=-3cm] (-2,-.5) rectangle (3,.5);
\node at (1,-6) [left]{${\scriptstyle k}$};
\node at (0,1) [left]{${\scriptstyle k-1}$};
}\,\notag\\
&=
\,\tikz[baseline=-.6ex, scale=.1, yshift=-1cm]{
\draw[->-=.5] (0,5) -- (0,7);
\draw[-<-=.5] (4,5) -- (4,7);
\draw[fill=white] (-2,4) rectangle (7,5);
\node at (4,7) [right]{${\scriptstyle 1}$};
\node at (0,7) [left]{${\scriptstyle k-1}$};
\draw[->-=.5] (1,-6) -- (1,-3);
\draw[->-=.5] (0,-3) -- (0,4);
\draw[-<-=.5] (4,-1) -- (4,4);
\draw[->-=.7, yscale=-1] (2,3) to[out=south, in=west] (4,1);
\draw[->-=.5, yscale=-1] (6,6) -- (6,3) to[out=south, in=east] (4,1);
\draw[fill=white, yshift=-3cm] (-2,-.5) rectangle (3,.5);
\node at (1,-6) [left]{${\scriptstyle k}$};
\node at (0,1) [left]{${\scriptstyle k-1}$};
}\,
=P_{{+}^{k-1}{-}}^{{+}^{k-1}{-}}(\mathbf{1}_{{+}^{k-1}}\otimes t_{{+}{+}}^{-})(P_{{+}^{k}}^{{+}^{k}}\otimes\mathbf{1}_{+}).\notag
\end{align}
The third equation uses $[2][k]=[k+1]+[k-1]$.
To prove $\iota_2$ is a morphism in $\TL$, 
we have to compute $f(P_{{+}^{k}}^{{+}^{k}}\otimes\mathbf{1}_{+})(\mathbf{1}_{{+}^{k-1}}\otimes t_{{-}}^{{+}{+}})P_{{+}^{k-1}{-}}^{{+}^{k-1}{-}}P_{{+}^{k-1}{-}}^{{+}^{k-1}{-}}$. 
By Proposition~\ref{doubleproj}, it reduce to
\[
\frac{[k]}{[k+1]}
(P_{{+}^k}^{{+}^k}\otimes\mathbf{1}_{+})
(\mathbf{1}_{{+}^{k-1}}\otimes t_{{-}}^{{+}{+}}t_{{+}{+}}^{-})
(P_{{+}^{k}}^{{+}^{k}}\otimes\mathbf{1}_{+})(\mathbf{1}_{{+}^{k-1}}\otimes t_{{-}}^{{+}{+}})P_{{+}^{k-1}{-}}^{{+}^{k-1}{-}}.
\]
We can calculate it by turning the diagrams in (\ref{p2CK2}) upside down.
Moreover, it can be confirmed that 
$
\iota_2\circ p_2=\frac{[k]}{[k+1]}
(P_{{+}^k}^{{+}^k}\otimes\mathbf{1}_{+})
(\mathbf{1}_{{+}^{k-1}}\otimes t_{{-}}^{{+}{+}}t_{{+}{+}}^{-})
(P_{{+}^k}^{{+}^k}\otimes\mathbf{1}_{+})=P_{{+}^k}^{{+}^k}\otimes\mathbf{1}_{+}
$
by Definition~\ref{OYdef} and Poroposition~\ref{doubleproj} and 
$p_2\circ\iota_2=P_{{+}^{k-1}{-}}^{{+}^{k-1}{-}}$ by inserting $P_{{+}^{k-1}{-}}^{{+}^{k-1}{-}}$ into the diagrams of (\ref{p2CK2}).
Therefore, these projections and inclusions satisfy the condition in Lemma~\ref{projincl}  and give $({+}^{k+1},f)\cong P_{(k+1,0)}\oplus P_{(k-1,1)}$ in $\Kar$.
In terms of $K_0(\Kar)$, 
\[
\langle P_{(k,0)}\rangle X=\langle ({+}^{k+1},f)\rangle=\langle P_{(k+1,0)}\rangle +  \langle P_{(k-1,1)}\rangle.
\]
\end{proof}

Let us prepare some lemmata to prove (\ref{CK3}).
We introduce an $A_2$ basis web represented by a rectangle with a diagonal line which consist of $H_{{+}{-}}^{{-}{+}}$ and $H_{{-}{+}}^{{{+}{-}}}$. 
\begin{DEF} 
Let $m$ and $n$ be positive integers.
\begin{enumerate}
\item
$
\,\tikz[baseline=-.6ex, scale=.1]{
\draw[->-=.5] (0,-3) -- (0,-1);
\draw[->-=.5] (0,1) -- (0,3);
\draw[-<-=.8] (-2,0) -- (-4,0) -- (-4,3);
\draw[->-=.8] (2,0) -- (4,0) -- (4,-3);
\draw[fill=white] (-2,-1) rectangle (2,1);
\draw (-2,1) -- (2,-1);
\node at (-4,3) [above]{${\scriptstyle 1}$};
\node at (4,-3) [below]{${\scriptstyle 1}$};
\node at (0,3) [above]{${\scriptstyle n}$};
\node at (0,-3) [below]{${\scriptstyle n}$};
}\,
=
\,\tikz[baseline=-.6ex, scale=.1]{
\draw[->-=.5] (-3,-3) -- (-3,0);
\draw[->-=.5] (-2.5,0) -- (-2.5,3);
\draw[->-=.5] (-2,-3) -- (-2,0);
\draw[->-=.5] (-1.5,0) -- (-1.5,3);
\node at (1,0) [above]{${\scriptstyle \cdots}$};
\node at (.5,0) [below]{${\scriptstyle \cdots}$};
\draw[->-=.5] (3,-3) -- (3,0);
\draw[->-=.5] (3.5,0) -- (3.5,3);
\draw[-<-=.8] (-2,0) -- (-4,0) -- (-4,3);
\draw (-2,0) -- (2,0);
\draw[->-=.8] (2,0) -- (4.5,0) -- (4.5,-3);
}\,
\in \TL({+}^n{-},{-}{+}^n)
$
\item
$
\,\tikz[baseline=-.6ex, scale=.1]{
\draw[->-=.5] (0,-3) -- (0,-1);
\draw[->-=.5] (0,1) -- (0,3);
\draw[-<-=.8] (-2,0) -- (-4,0) -- (-4,3);
\draw[->-=.8] (2,0) -- (4,0) -- (4,-3);
\draw[fill=white] (-2,-1) rectangle (2,1);
\draw (-2,1) -- (2,-1);
\node at (-4,3) [above]{${\scriptstyle m}$};
\node at (4,-3) [below]{${\scriptstyle m}$};
\node at (0,3) [above]{${\scriptstyle n}$};
\node at (0,-3) [below]{${\scriptstyle n}$};
}\,
=
\,\tikz[baseline=-.6ex, scale=.1]{
\draw[->-=.5] (0,-3) -- (0,-1);
\draw[->-=.5] (0,1) -- (0,3);
\draw[-<-=.8] (-2,0) -- (-4,0) -- (-4,3);
\draw[->-=.8] (2,0) -- (4,0) -- (4,-3);
\draw[-<-=.5] (-6,-3) -- (-6,3);
\draw[fill=white] (-2,-1) rectangle (2,1);
\draw (-2,1) -- (2,-1);
\node at (-4,3) [above]{${\scriptstyle 1}$};
\node at (4,-3) [below]{${\scriptstyle 1}$};
\node at (0,3) [above]{${\scriptstyle n}$};
\node at (0,-3) [below]{${\scriptstyle n}$};
\node at (-6,-3) [below]{${\scriptstyle m-1}$};
}\,
\circ
\,\tikz[baseline=-.6ex, scale=.1]{
\draw[->-=.5] (0,-3) -- (0,-1);
\draw[->-=.5] (0,1) -- (0,3);
\draw[-<-=.8] (-2,0) -- (-4,0) -- (-4,3);
\draw[->-=.8] (2,0) -- (4,0) -- (4,-3);
\draw[-<-=.5] (6,-3) -- (6,3);
\draw[fill=white] (-2,-1) rectangle (2,1);
\draw (-2,1) -- (2,-1);
\node at (-4,3) [above]{${\scriptstyle m-1}$};
\node at (4,-3) [below]{${\scriptstyle m-1}$};
\node at (0,3) [above]{${\scriptstyle n}$};
\node at (0,-3) [below]{${\scriptstyle n}$};
\node at (6,3) [above]{${\scriptstyle 1}$};
}\,
\in \TL({+}^n{-}^m,{-}^m{+}^n)
$
\end{enumerate}
\end{DEF}

\begin{LEM}[Kim~{\cite[Proposition~3.1]{Kim07}}]\label{singleexp}
For any positive integer $k$,
\[
\,\tikz[baseline=-.6ex, scale=.1]{
\draw[->-=.2,->-=.8] (0,-3) -- (0,4);
\draw[fill=white] (-3,0) rectangle (3,1);
\node at (0,3) [left]{${\scriptstyle k}$};
}\,
=
\sum_{j=0}^{k-1}(-1)^{j}\frac{[k-j]}{[k]}
\,\tikz[baseline=-.6ex, scale=.1, yshift=-2cm]{
\draw[->-=.2, ->-=.8, rounded corners] (-4,-3) -- (-4,3) -- (2,3) -- (2,8);
\draw[->-=.8] (-1,0) -- (-1,8);
\draw[->-=.5] (4,0) -- (4,8);
\draw[->-=.5] (1,-3) -- (1,0);
\draw[fill=white] (-3,2) rectangle (1,4);
\draw (-3,4) -- (1,2);
\draw[fill=white] (-3,0) rectangle (5,1);
\node at (-1,8) [above]{${\scriptstyle j}$};
\node at (2,8) [above]{${\scriptstyle 1}$};
\node at (3,8) [above right]{${\scriptstyle k-j-1}$};
\node at (1,-3) [below]{${\scriptstyle k-1}$};
\node at (-4,-3) [below]{${\scriptstyle 1}$};
}\,
\]
\end{LEM}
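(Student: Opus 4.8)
The plan is to argue by induction on $k$, using Definition~\ref{singledef} to peel off a single strand. Write $W_j^{(k)}$ for the $A_2$ web in the $j$-th summand on the right-hand side --- a bottom clasp $P_{{+}^{k-1}}^{{+}^{k-1}}$, one strand routed over the top, and a diagonal $H$-box through which that strand crosses $j$ of the clasp's legs --- so the claim is $P_{{+}^{k}}^{{+}^{k}}=\sum_{j=0}^{k-1}c_j^{(k)}\,W_j^{(k)}$ with $c_j^{(k)}=(-1)^{j}[k-j]/[k]$. For $k=1$ both sides are $\mathbf 1_{+}$, and $k=2$ is immediate: Definition~\ref{singledef} gives $P_{{+}^{2}}^{{+}^{2}}=\mathbf 1_{{+}^{2}}-\tfrac{1}{[2]}t_{-}^{++}t_{++}^{-}$, and the size-$1$ diagonal box is $t_{-}^{++}t_{++}^{-}$, matching $c_0^{(2)}=1$ and $c_1^{(2)}=-1/[2]$.

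The engine of the induction is the clean one-step recursion satisfied by the coefficients, $c_0^{(k)}=1$ and $c_j^{(k)}=-\tfrac{[k-1]}{[k]}\,c_{j-1}^{(k-1)}$ for $1\le j\le k-1$, which is nothing but the cancellation $(-1)^{j}\tfrac{[k-j]}{[k]}=-\tfrac{[k-1]}{[k]}(-1)^{j-1}\tfrac{[k-j]}{[k-1]}$. This mirrors Definition~\ref{singledef} exactly. Using the left--right reflection symmetry of the clasp (a consequence of the uniqueness Proposition~\ref{singleunique}), I peel the \emph{leftmost} strand and write $P_{{+}^{k}}^{{+}^{k}}=(\mathbf 1_{+}\otimes P_{{+}^{k-1}}^{{+}^{k-1}})-\tfrac{[k-1]}{[k]}(\mathbf 1_{+}\otimes P_{{+}^{k-1}}^{{+}^{k-1}})\,u\,(\mathbf 1_{+}\otimes P_{{+}^{k-1}}^{{+}^{k-1}})$, where $u$ caps the first two strands with $t_{-}^{++}t_{++}^{-}$. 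The first summand is exactly $W_0^{(k)}$, with coefficient $c_0^{(k)}=1$. For the correction term I substitute the induction hypothesis into the upper clasp; granting the diagrammatic identity that the correction term equals $-\tfrac{[k-1]}{[k]}\sum_{i=0}^{k-2}c_i^{(k-1)}W_{i+1}^{(k)}$, the coefficient recursion reindexes this as $\sum_{j=1}^{k-1}c_j^{(k)}W_j^{(k)}$, and the induction closes.

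The crux --- and the expected main obstacle --- is that last diagrammatic identity: capping the routed strand onto the first leg of the lower clasp and substituting the $(k-1)$-expansion above must turn each $W_i^{(k-1)}$ into $W_{i+1}^{(k)}$, i.e.\ \emph{grow the diagonal box by exactly one crossing}. I would establish it by absorbing the smaller clasps via Proposition~\ref{singleproj}~(1), killing every unwanted term via the $t$-annihilation of Proposition~\ref{singleproj}~(2), and then using the bigon and square reductions of $\TL$ to convert the cap $u$ into one extra row of $H_{{+}{-}}^{{-}{+}}$ --- precisely the recursive description of the diagonal box recorded in the Definition preceding this lemma. The delicate points are bookkeeping rather than conceptual: one must verify that each intermediate web remains reduced, so that Lemma~\ref{decomp} guarantees no forced reduction is overlooked, that the newly created crossing lands in the correct position so the box has size $i+1$, and that the reflection used to peel the left strand is compatible with the orientation conventions of the $H_{{+}{-}}^{{-}{+}}$ webs. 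Once this normal form is fixed, the coefficient arithmetic is the trivial cancellation above.
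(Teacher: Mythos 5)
The paper offers no proof of this lemma at all: it is quoted as Kim's Proposition~3.1 and used as a black box, so there is no in-paper argument to measure yours against. Judged on its own, your induction is the standard route to the single clasp expansion and its skeleton is sound: the left-handed form of Definition~\ref{singledef} is legitimate (the mirror image of $P_{{+}^k}^{{+}^k}$ is again a nontrivial idempotent killed by all $t$'s, hence equals $P_{{+}^k}^{{+}^k}$ by Proposition~\ref{singleunique}), the base cases are right, and the coefficient recursion $c_j^{(k)}=-\tfrac{[k-1]}{[k]}c_{j-1}^{(k-1)}$ checks out.

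The step you grant is where the content sits, and it is true, but your proposed toolkit for it overshoots: no $t$-annihilation and no bigon or square reductions are needed, and there is no ``cap to convert.'' Writing $u=t_{-}^{++}t_{++}^{-}\otimes\mathbf{1}_{{+}^{k-2}}$ and $W_i^{(k-1)}=B_i\circ(\mathbf{1}_{+}\otimes P_{{+}^{k-2}}^{{+}^{k-2}})$ with $B_i$ the fan-out-and-weave part, the term $(\mathbf{1}_{+}\otimes W_i^{(k-1)})\,u\,(\mathbf{1}_{+}\otimes P_{{+}^{k-1}}^{{+}^{k-1}})$ is handled by exactly two observations: first, $\mathbf{1}_{{+}^2}\otimes P_{{+}^{k-2}}^{{+}^{k-2}}$ commutes with $u$ (disjoint strands) and is then absorbed into the bottom clasp by Proposition~\ref{singleproj}~(1); second, $t_{-}^{++}t_{++}^{-}$ already \emph{is} the elementary crossing of the diagonal box in this lemma --- the resolution of a crossing of two upward-oriented ${+}$ strands is a vertical rung joining a sink below to a source above, i.e.\ $t_{-}^{++}t_{++}^{-}$, not the $H_{{+}{-}}^{{-}{+}}$ of the preceding Definition (whose box moves a ${-}$ strand and hence has horizontal rungs) --- so $(\mathbf{1}_{+}\otimes B_i)\circ u=B_{i+1}$ is the definition of the size-$(i+1)$ box, with no relation of $\TL$ invoked. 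Your base case $k=2$ already uses this correct reading of the box. So the argument closes; the only genuine defect of the write-up is that the crux is asserted rather than carried out, and the sketch you give for it points at the wrong lemmas.
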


\begin{LEM}\label{lem2}
Let $k$ be a positive integer and 
$
X(k;i)
=
\,\tikz[baseline=-.6ex, scale=.1, yscale=.8]{
\draw[->-=.2, -<-=.8] (-2,-4) -- (-2,9);
\draw[->-=.8, rounded corners] (1,1) -- (1,3) -- (-2,3);
\draw[->-=.2, ->-=.8] (3,1) -- (3,9);
\draw[-<-=.2, -<-=.8] (9,1) -- (9,9);
\draw[-<-=.5, rounded corners] (4,0) -- (4,-1) -- (9,-1) -- (9,1);
\draw[-<-=.5] (2,0) -- (2,-4);
\draw[fill=white] (0,5) rectangle (12,6);
\draw[fill=white] (0,0) rectangle (5,1);
\node at (3,9) [above]{${\scriptstyle k}$};
\node at (9,9) [above]{${\scriptstyle i}$};
\node at (3,3) [right]{${\scriptstyle k}$};
\node at (9,3) [right]{${\scriptstyle i}$};
\node at (2,-4) [below]{${\scriptstyle k+1-i}$};
}\, .
$
Then, $X(k;i)=0$ for $1<i<k+1$ and 
\[
X(k;1)
=\frac{(-1)^{k}}{[k+1]}
\,\tikz[baseline=-.6ex, scale=.1, yscale=.8, yshift=-4cm]{
\draw[->-=.2, -<-=.8] (-2,0) -- (-2,9);
\draw[->-=.8, rounded corners] (1,5) -- (1,3) -- (-2,3);
\draw[->-=.5] (2,6) -- (2,9);
\draw[->-=.5] (4,0) -- (4,5);
\draw[-<-=.5] (5,6) -- (5,9);
\draw[fill=white] (0,5) rectangle (7,6);
\node at (2,9) [above]{${\scriptstyle k}$};
\node at (5,9) [above]{${\scriptstyle 1}$};
\node at (4,0) [below]{${\scriptstyle k}$};
}\, 
=
\frac{(-1)^{k}}{[k+1]}
(\mathbf{1}_{-}\otimes P_{{-}{+}^k{-}^{l-1}}^{{+}^k{-}^{l}})
(t_{{+}}^{{-}{-}}\otimes P_{{+}^k{-}^{l-1}}^{{+}^k{-}^{l-1}}).
\]
\end{LEM}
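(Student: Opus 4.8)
\emph{Proof plan.}
The lower box in $X(k;i)$ is the single clasp $P_{{+}^{k+1}}^{{+}^{k+1}}$ on $k+1$ strands, and the rest of the diagram is a ladder of $H$-boxes capped off by the upper clasp on $k+i$ strands. The plan is to expand this lower clasp by Kim's formula (Lemma~\ref{singleexp}) applied with $k$ replaced by $k+1$, namely $P_{{+}^{k+1}}^{{+}^{k+1}}=\sum_{j=0}^{k}(-1)^{j}\frac{[k+1-j]}{[k+1]}D_{j}$, where $D_{j}$ is the diagram in which $j$ strands run straight, one strand is bent through an $H$-box, and the remaining $k-j$ strands form a smaller clasp. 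Substituting this expansion into $X(k;i)$ turns the computation into a finite sum of webs, each of which I can reduce strand by strand against the two surviving clasps.

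First I would isolate the vanishing mechanism. Whenever the bent strand of some $D_{j}$ is routed through the ladder so that its free end is capped against the upper clasp, the resulting web contains a $b$- or $d$-turnback adjacent to a clasp, hence vanishes by Proposition~\ref{doubleproj}~(2) (equivalently by Proposition~\ref{multiprop}). The only way a summand can avoid producing such a turnback is for the bent strand to thread all the way along the $i$-bundle without being capped, and the orientations at the $H$-boxes (see \eqref{H}) allow this for a single value of $j$ only.

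Next I would carry out the case analysis, organized as an induction on $i$ that peels one strand off the $i$-bundle at each step using the $H$-crossing ($4$-gon) reduction of $\TL$. For $1<i<k+1$ the bundle cannot be matched against the single bent strand without forcing a cap, so every summand $D_{j}$ meets a clasp in a turnback and $X(k;i)=0$. For $i=1$ the bundle is a single strand: all summands with $j<k$ die by the turnback mechanism above, and exactly the extremal summand $j=k$ survives, carrying coefficient $(-1)^{k}\frac{[1]}{[k+1]}=\frac{(-1)^{k}}{[k+1]}$. After an isotopy this surviving web is the diagram displayed on the right-hand side, which I then rewrite as $(\mathbf{1}_{-}\otimes P_{{-}{+}^k{-}^{l-1}}^{{+}^k{-}^{l}})(t_{{+}}^{{-}{-}}\otimes P_{{+}^k{-}^{l-1}}^{{+}^k{-}^{l-1}})$ by splitting off the top clasp via the absorption property Proposition~\ref{doubleproj}~(1) together with Definition~\ref{OYdef}.

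The hard part will be the bookkeeping in the vanishing step: one must follow the orientations through each $H$-box of the ladder and confirm that, for every non-surviving pair $(i,j)$, a genuine $t$-, $b$-, or $d$-turnback (and not merely a removable crossing) lands against one of the two clasps. I expect the inductive peeling of the $i$-bundle to make this uniform, reducing $X(k;i)$ via one $H$-crossing to a capped term that vanishes by Proposition~\ref{doubleproj} plus a term of the same shape with $i$ replaced by $i-1$, so that the base case $i=1$ and the single-term coefficient computation complete the argument.
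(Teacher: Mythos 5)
Your starting move---expanding the lower clasp $P_{{+}^{k+1}}^{{+}^{k+1}}$ of $X(k;i)$ by Lemma~\ref{singleexp}---is the same as the paper's, but your vanishing analysis misidentifies the mechanism, and this leads to a wrong conclusion about which summand survives. A single strand whose ``free end is capped against the upper clasp'' kills nothing; what actually annihilates a summand $D_j$ is the trivalent source at the far end of the chain of $H$'s, a $t_{{-}}^{{+}{+}}$ whose \emph{two} upward legs exit at the adjacent top positions $j$ and $j+1$ of the expanded clasp. The term dies exactly when both of those positions feed into the upper clasp (Proposition~\ref{doubleproj}~(2)), and since position $1$ is the only output that escapes to the sink vertex on the left ${-}$-line, this kills every $D_j$ with $j\geq 2$ --- \emph{including} $j=k$. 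So the claim that ``exactly the extremal summand $j=k$ survives'' is backwards: for $k\geq 2$ the $j=k$ term is among those that vanish, and the unique survivor of this test is $j=1$. (The $j=0$ term dies for a different reason: after absorbing the residual $P_{{+}^k}^{{+}^k}$ into the upper clasp, the innermost strand of the bend becomes a $b^{{+}{-}}$ turnback on two adjacent legs of $P_{{+}^k{-}^i}^{{+}^k{-}^i}$.)

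Because the surviving $j=1$ term still contains a $k$-strand clasp wrapped in the same configuration, a one-shot expansion cannot terminate; this is exactly why the paper instead runs an induction on $k$, obtaining $X(k;i)=\cdots-\frac{[k]}{[k+1]}\bigl(\cdots X(k-1;i)\cdots\bigr)$ and producing the final coefficient as the telescoping product $\prod_{m=1}^{k}\bigl(-\tfrac{[m]}{[m+1]}\bigr)=\frac{(-1)^{k}}{[k+1]}$, while the discarded $H$'s accumulate into the $\sigma$-part of $P_{{-}{+}^k{-}^{l-1}}^{{+}^k{-}^{l}}$. The numerical coincidence $(-1)^{k}\tfrac{[1]}{[k+1]}=\tfrac{(-1)^{k}}{[k+1]}$ with the $j=k$ coefficient of Lemma~\ref{singleexp} is what makes your claim look plausible, but even if that term did not vanish its web (the first strand woven all the way to the last position) is not the displayed right-hand side. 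Finally, your proposed induction on $i$ is not the right variable: peeling one strand off the $i$-bundle would produce a term proportional to the case $i-1$, which is nonzero at $i-1=1$, so that recursion cannot close; the descent has to be in $k$, with the $i>1$ vanishing coming from the recursion bottoming out.
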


\begin{proof}
$A_2$ webs appearing in the RHS of Lemma~\ref{singleexp} contains $t_{{-}}^{{+}{+}}$ if $j\geq 1$ in the top side. 
Thus, 
\[
\,\tikz[baseline=-.6ex, scale=.1]{
\draw[->-=.2, -<-=.8] (-2,-4) -- (-2,9);
\draw[->-=.8, rounded corners] (1,1) -- (1,3) -- (-2,3);
\draw[->-=.2, ->-=.8] (3,1) -- (3,9);
\draw[-<-=.2, -<-=.8] (9,1) -- (9,9);
\draw[-<-=.5, rounded corners] (4,0) -- (4,-1) -- (9,-1) -- (9,1);
\draw[-<-=.5] (2,0) -- (2,-4);
\draw[fill=white] (0,5) rectangle (12,6);
\draw[fill=white] (0,0) rectangle (5,1);
\node at (3,9) [above]{${\scriptstyle k}$};
\node at (9,9) [above]{${\scriptstyle i}$};
\node at (3,3) [right]{${\scriptstyle k}$};
\node at (9,3) [right]{${\scriptstyle i}$};
\node at (2,-4) [below]{${\scriptstyle k+1-i}$};
}\,
=
\,\tikz[baseline=-.6ex, scale=.1]{
\draw[->-=.2, -<-=.8] (-2,-4) -- (-2,9);
\draw[->-=.5, rounded corners] (0,-4) -- (0,3) -- (-2,3);
\draw[->-=.2, ->-=.8] (3,1) -- (3,9);
\draw[-<-=.2, -<-=.8] (9,1) -- (9,9);
\draw[-<-=.5, rounded corners] (4,0) -- (4,-1) -- (9,-1) -- (9,1);
\draw[-<-=.5] (2,0) -- (2,-4);
\draw[fill=white] (0,5) rectangle (12,6);
\draw[fill=white] (1,0) rectangle (5,1);
\node at (3,9) [above]{${\scriptstyle k}$};
\node at (9,9) [above]{${\scriptstyle i}$};
\node at (3,3) [right]{${\scriptstyle k}$};
\node at (9,3) [right]{${\scriptstyle i}$};
\node at (2,-4) [right]{${\scriptstyle k-i}$};
}\,
-\frac{[k]}{[k+1]}
\,\tikz[baseline=-.6ex, scale=.1]{
\draw[->-=.2, -<-=.8] (-3,-4) -- (-3,10);
\draw[-<-=.5] (-3,4) -- (-1,4);
\draw[->-=.2] (-1,-4) -- (-1,6);
\draw[->-=.8, rounded corners] (1,1) -- (1,3) -- (-1,3);
\draw[->-=.5] (3,1) -- (3,6);
\draw[->-=.5] (1,6) -- (1,10);
\draw[-<-=.2, -<-=.8] (9,1) -- (9,10);
\draw[-<-=.5, rounded corners] (4,0) -- (4,-1) -- (9,-1) -- (9,1);
\draw[-<-=.5] (2,0) -- (2,-4);
\draw[fill=white] (-2,6) rectangle (12,7);
\draw[fill=white] (0,0) rectangle (5,1);
\node at (1,10) [above]{${\scriptstyle k}$};
\node at (9,10) [above]{${\scriptstyle i}$};
\node at (2,4) [right]{${\scriptstyle k-1}$};
\node at (9,3) [right]{${\scriptstyle i}$};
\node at (2,-4) [right]{${\scriptstyle k-i}$};
}\,
\]
by applying Lemma~\ref{singleexp} to $P_{{+}^k}^{{+}^k}$. 
The $A_2$ web in the second term has $X(k-1;i)$.
We obtain proof by induction on $k$. 
\end{proof}

\begin{PROP}[Kim~{\cite[Theorem~3.3]{Kim07}}]\label{prop5}
For any positive integers $k$ and $l$,

\begin{align*}
\,\tikz[baseline=-.6ex, scale=.1]{
\draw[->-=.5] (2,1) -- (2,4);
\draw[->-=.5] (2,-3) -- (2,0);
\draw[-<-=.5] (5,1) -- (5,4);
\draw[-<-=.5] (5,-3) -- (5,0);
\draw[fill=white] (0,0) rectangle (7,1);
\node at (1,4) [above]{${\scriptstyle k+1}$};
\node at (6,4) [above]{${\scriptstyle l}$};
\node at (1,-3) [below]{${\scriptstyle k+1}$};
\node at (6,-3) [below]{${\scriptstyle l}$};
}\,
=
\,\tikz[baseline=-.6ex, scale=.1]{
\draw[->-=.5] (-1,-3) -- (-1,4);
\draw[->-=.5] (2,1) -- (2,4);
\draw[->-=.5] (2,-3) -- (2,0);
\draw[-<-=.5] (5,1) -- (5,4);
\draw[-<-=.5] (5,-3) -- (5,0);
\draw[fill=white] (0,0) rectangle (7,1);
\node at (-1,4) [above]{${\scriptstyle 1}$};
\node at (2,4) [above]{${\scriptstyle k}$};
\node at (5,4) [above]{${\scriptstyle l}$};
\node at (2,-3) [below]{${\scriptstyle k}$};
\node at (5,-3) [below]{${\scriptstyle l}$};
}\,
-\frac{[k]}{[k+1]}
\,\tikz[baseline=-.6ex, scale=.1, yscale=.7]{
\begin{scope}[yshift=3cm]
\draw[->-=.5] (-2,0) -- (-2,4);
\draw[->-=.5] (2,1) -- (2,4);
\draw[-<-=.5] (7,1) -- (7,4);
\draw[fill=white] (-1,0) rectangle (9,1);
\node at (-2,4) [above]{${\scriptstyle 1}$};
\node at (2,4) [above]{${\scriptstyle k}$};
\node at (7,4) [above]{${\scriptstyle l}$};
\end{scope}
\draw (-2,-3) -- (-2,3);
\draw[->-=.5] (1,-3) -- (1,3);
\draw[-<-=.5] (8,-3) -- (8,3);
\draw[->-=.5] (-2,1) -- (0,1) -- (0,3);
\draw[-<-=.5] (-2,-1) -- (0,-1) -- (0,-3);
\node at (.5,0) [right]{${\scriptstyle k-1}$};
\node at (8,0) [right]{${\scriptstyle l}$};
\begin{scope}[yshift=-3cm, yscale=-1]
\draw[-<-=.5] (-2,0) -- (-2,4);
\draw[-<-=.5] (2,1) -- (2,4);
\draw[->-=.5] (7,1) -- (7,4);
\draw[fill=white] (-1,0) rectangle (9,1);
\node at (-2,4) [below]{${\scriptstyle 1}$};
\node at (2,4) [below]{${\scriptstyle k}$};
\node at (7,4) [below]{${\scriptstyle l}$};
\end{scope}
}\,
-\frac{[l]}{[k+1][k+l+2]}
\,\tikz[baseline=-.6ex, scale=.1, yscale=.7]{
\begin{scope}[yshift=3cm]
\draw[->-=.5] (-2,0) -- (-2,4);
\draw[->-=.5] (2,1) -- (2,4);
\draw[-<-=.5] (6,1) -- (6,4);
\draw[fill=white] (-1,0) rectangle (8,1);
\node at (-2,4) [above]{${\scriptstyle 1}$};
\node at (2,4) [above]{${\scriptstyle k}$};
\node at (6,4) [above]{${\scriptstyle l}$};
\end{scope}
\draw[->-=.5] (2,-3) -- (2,3);
\draw[-<-=.5] (6,-3) -- (6,3);
\draw (-2,3) to[out=south, in=west] (-1,1) to[out=east, in=south] (0,3);
\draw (-2,-3) to[out=north, in=west] (-1,-1) to[out=east, in=north](0,-3);
\node at (2,0) [right]{${\scriptstyle k}$};
\node at (6,0) [right]{${\scriptstyle l-1}$};
\begin{scope}[yshift=-3cm, yscale=-1]
\draw[-<-=.5] (-2,0) -- (-2,4);
\draw[-<-=.5] (2,1) -- (2,4);
\draw[->-=.5] (6,1) -- (6,4);
\draw[fill=white] (-1,0) rectangle (8,1);
\node at (-2,4) [below]{${\scriptstyle 1}$};
\node at (2,4) [below]{${\scriptstyle k}$};
\node at (6,4) [below]{${\scriptstyle l}$};
\end{scope}
}\, .
\end{align*}
\end{PROP}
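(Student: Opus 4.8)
The plan is to derive the identity by absorbing a single-colour clasp into the double clasp and then expanding it one strand at a time, following Kim. By Proposition~\ref{doubleproj}~(1) applied with the sub-clasp $P_{{+}^{k+1}}^{{+}^{k+1}}=P_{{+}^{k+1}{-}^{0}}^{{+}^{k+1}{-}^{0}}$ (the case $t=0$, $a=0$, $b=l$), the double clasp absorbs the single clasp on its ${+}$-block, so that $P_{{+}^{k+1}{-}^l}^{{+}^{k+1}{-}^l}=(P_{{+}^{k+1}}^{{+}^{k+1}}\otimes\mathbf{1}_{{-}^l})\,P_{{+}^{k+1}{-}^l}^{{+}^{k+1}{-}^l}$. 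I would then replace the top copy of $P_{{+}^{k+1}}^{{+}^{k+1}}$ by its tail expansion from Lemma~\ref{singleexp}, which peels off the left-most ${+}$ strand and re-inserts it, splitting the remaining box $P_{{+}^{k}}^{{+}^{k}}$ into blocks of sizes $j$, $1$, $k-j$ at the position indexed by $j$, with coefficient $(-1)^{j}[k+1-j]/[k+1]$ for $0\le j\le k$.

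The heart of the argument is to identify, after this substitution, which of the $k+1$ summands survive composition with the lower double clasp. The term $j=0$ contributes the straight-through strand alongside the box $P_{{+}^{k}}^{{+}^{k}}$, which recombines with the double clasp below into exactly the first term $\mathbf{1}_{{+}}\otimes P_{{+}^{k}{-}^l}^{{+}^{k}{-}^l}$. For $j\geq1$ the re-inserted strand meets a trivalent vertex $t_{{-}}^{{+}{+}}$, so the relevant sub-diagram is precisely the configuration $X(k;i)$ of Lemma~\ref{lem2}, namely the strand dragged back across $i$ of the ${+}$-edges of the double clasp. By the vanishing $X(k;i)=0$ for $1<i<k+1$ (equivalently, Proposition~\ref{multiprop}, since two or more turnbacks annihilate the clasp), every such term dies except the one with $i=1$. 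Its reduced form, supplied by the second displayed equation of Lemma~\ref{lem2} with prefactor $(-1)^{k}/[k+1]$, already involves the ${-}^l$-block through the factor $t_{{+}}^{{-}{-}}$ and the reduced clasps $P_{{+}^{k}{-}^{l-1}}^{{+}^{k}{-}^{l-1}}$ and $P_{{-}{+}^{k}{-}^{l-1}}^{{+}^{k}{-}^{l}}$; unpacking it according to whether the surviving strand closes back onto the ${+}^{k}$ block or is carried into the ${-}^l$ block produces the second and third terms.

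The step I expect to be delicate is the coefficient bookkeeping, and in particular the production of the denominator $[k+l+2]$ in the third term. The factor $[k]/[k+1]$ in the second term falls out directly from the Lemma~\ref{singleexp} and Lemma~\ref{lem2} prefactors, but the third term requires combining $(-1)^{k}/[k+1]$ with the internal normalising coefficients ${k\brack i}{l\brack i}/{k+l+1\brack i}$ of Definition~\ref{OYdef} as the pulled strand is dragged through the ${-}^l$ block; re-summing these via the standard quantum-integer identities (the same mechanism as $[2][k]=[k+1]+[k-1]$ used in Proposition~\ref{pfCK2}) is what upgrades the ambient normalisation of the $(k+1,l)$-clasp from $[k+l+1]$ to $[k+l+2]$ and yields the coefficient $[l]/([k+1][k+l+2])$. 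Once the three surviving contributions are assembled, matching them against the stated diagrams finishes the proof; a convenient consistency check is that the identity-web coefficient of the right-hand side equals $1$, since only the first term avoids a cap, in agreement with $P_{{+}^{k+1}{-}^l}^{{+}^{k+1}{-}^l}=\mathbf{1}_{{+}^{k+1}{-}^l}+(\text{turnback terms})$ from Lemma~\ref{decomp}.
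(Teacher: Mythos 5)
Your overall strategy of absorbing a clasp, expanding it, and discarding the turnback terms has the right flavour, but the specific decomposition you start from makes the argument circular. From $P_{{+}^{k+1}{-}^l}^{{+}^{k+1}{-}^l}=(P_{{+}^{k+1}}^{{+}^{k+1}}\otimes\mathbf{1}_{{-}^l})\,P_{{+}^{k+1}{-}^l}^{{+}^{k+1}{-}^l}$, expanding the single-colour clasp by Lemma~\ref{singleexp} leaves the full $(k+1,l)$ double clasp intact in every summand, so nothing has been reduced. Worse, each summand with $j\geq 1$ places a $t_{{+}{+}}^{{-}}$ (the bottom vertex of the first crossing of the dragged strand) directly on top of the strands emerging from that double clasp, so by Proposition~\ref{multiprop} all of these summands vanish, while the $j=0$ summand simply reabsorbs $P_{{+}^{k}}^{{+}^{k}}$ into the double clasp. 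Your expansion therefore collapses to the tautology $P_{{+}^{k+1}{-}^l}^{{+}^{k+1}{-}^l}=P_{{+}^{k+1}{-}^l}^{{+}^{k+1}{-}^l}$. In particular the claim that the $j=0$ term ``recombines with the double clasp below into exactly the first term $\mathbf{1}_{+}\otimes P_{{+}^{k}{-}^l}^{{+}^{k}{-}^l}$'' is not correct: recombination returns the $(k+1,l)$ clasp, not $\mathbf{1}_{+}\otimes P_{{+}^{k}{-}^l}^{{+}^{k}{-}^l}$.

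The missing idea is that one must eliminate the $(k+1,l)$ clasp itself. The paper does this by sandwiching it between two copies of the \emph{smaller} double clasp, $P_{{+}^{k+1}{-}^{l}}^{{+}^{k+1}{-}^{l}}=(\mathbf{1}_{+}\otimes P_{{+}^{k}{-}^{l}}^{{+}^{k}{-}^{l}})P_{{+}^{k+1}{-}^{l}}^{{+}^{k+1}{-}^{l}}(\mathbf{1}_{+}\otimes P_{{+}^{k}{-}^{l}}^{{+}^{k}{-}^{l}})$, and then expanding the middle clasp by the closed Ohtsuki--Yamada formula of Definition~\ref{OYdef}; this replaces $P_{{+}^{k+1}{-}^{l}}^{{+}^{k+1}{-}^{l}}$ by a sum $\sum_{i}(-1)^i\frac{{k+1\brack i}{l\brack i}}{{k+l+2\brack i}}X(k,l;i)$ built only from $(k,l)$-clasps and explicit webs. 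Lemma~\ref{lem2} is then the tool that kills $X(k,l;i)$ for $i>1$ and evaluates $X(k,l;1)$ with prefactor $1/[k+1]^2$ (note that $X(k;i)$ is the sandwiched $i$-fold turnback configuration coming from this expansion, not the configuration produced by the tail expansion of a single-colour clasp, and its vanishing requires the induction of Lemma~\ref{lem2}, not merely Proposition~\ref{multiprop}). The term $X(k,l;0)$ is computed from the recursion of Definition~\ref{singledef} and yields the first two terms with the factor $[k]/[k+1]$, while the third coefficient is exactly $\frac{{k+1\brack 1}{l\brack 1}}{{k+l+2\brack 1}}\cdot\frac{1}{[k+1]^2}=\frac{[l]}{[k+1][k+l+2]}$; the denominator $[k+l+2]$ enters through the $i=1$ Ohtsuki--Yamada coefficient of the $(k+1,l)$ clasp, not through a resummation of the coefficients ${k\brack i}{l\brack i}/{k+l+1\brack i}$ as you suggest. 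To repair your proof you would need to replace your first step by this sandwich-and-expand step, or by something equivalent that genuinely removes $P_{{+}^{k+1}{-}^l}^{{+}^{k+1}{-}^l}$ from the right-hand side.
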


\begin{proof}
We decompose $P_{{+}^{k+1}{-}^{l}}^{{+}^{k+1}{-}^{l}}$ into $(\mathbf{1}_{+}\otimes P_{{+}^{k}{-}^{l}}^{{+}^{k}{-}^{l}})P_{{+}^{k+1}{-}^{l}}^{{+}^{k+1}{-}^{l}}(\mathbf{1}_{+}\otimes P_{{+}^{k}{-}^{l}}^{{+}^{k}{-}^{l}})$ and expand the middle $A_2$ clasp $P_{{+}^{k+1}{-}^{l}}^{{+}^{k+1}{-}^{l}}$ by using Definition~\ref{OYdef}.
We calculate the following $A_2$ web:
\begin{align*}
X(k,l;i)
&=\,\tikz[baseline=-.6ex, scale=.1, yscale=.8]{
\draw[->-=.5] (0,-4) -- (0,4);
\draw[-<-=.5] (10,-9) -- (10,9);
\begin{scope}[yshift=3cm]
\draw[-<-=.5] (9,6) -- (9,9);
\draw[->-=.5] (2,6) -- (2,9);
\draw[->-=.5] (-2,0) -- (-2,9);
\draw[->-=.5] (2,1) -- (2,6);
\draw[-<-=.5] (8,1) -- (8,6);
\draw[-<-=.5, rounded corners] (4,0) -- (4,-1) -- (8,-1) -- (8,1);
\draw[fill=white] (0,5) rectangle (12,6);
\draw[fill=white] (-3,0) rectangle (5,1);
\end{scope}
\begin{scope}[yscale=-1, yshift=3cm]
\draw[->-=.5] (9,6) -- (9,9);
\draw[-<-=.5] (2,6) -- (2,9);
\draw[-<-=.5] (-2,0) -- (-2,9);
\draw[-<-=.5] (2,1) -- (2,6);
\draw[->-=.5] (8,1) -- (8,6);
\draw[->-=.5, rounded corners] (4,0) -- (4,-1) -- (8,-1) -- (8,1);
\draw[fill=white] (0,5) rectangle (12,6);
\draw[fill=white] (-3,0) rectangle (5,1);
\end{scope}
\node at (-2,12) [above]{${\scriptstyle 1}$};
\node at (-2,-12) [below]{${\scriptstyle 1}$};
\node at (2,12) [above]{${\scriptstyle k}$};
\node at (2,-12) [below]{${\scriptstyle k}$};
\node at (9,12) [above]{${\scriptstyle l}$};
\node at (9,-12) [below]{${\scriptstyle l}$};
\node at (2,6) [left]{${\scriptstyle k}$};
\node at (2,-6) [left]{${\scriptstyle k}$};
\node at (8,6) [left]{${\scriptstyle i}$};
\node at (8,-6) [left]{${\scriptstyle i}$};
\node at (10,0) [right]{${\scriptstyle l-i}$};
\node at (0,0) [left]{${\scriptstyle k+1-i}$};
}\,
=-\frac{[k]}{[k+1]}
\,\tikz[baseline=-.6ex, scale=.1, yscale=.8]{
\draw[->-=.5] (2,-4) -- (2,4);
\draw[-<-=.5] (11,-9) -- (11,9);
\begin{scope}[yshift=3cm]
\draw[-<-=.5] (1,5) -- (1,4) -- (-2,4);
\draw[->-=.5] (1,1) -- (1,2) -- (-2,2);
\draw[-<-=.5] (10,6) -- (10,9);
\draw[->-=.5] (2,6) -- (2,9);
\draw[->-=.2, ->-=.8] (-2,-6) -- (-2,9);
\draw[->-=.5] (3,1) -- (3,6);
\draw (9,1) -- (9,6);
\draw[-<-=.5, rounded corners] (4,0) -- (4,-1) -- (9,-1) -- (9,1);
\draw[fill=white] (0,5) rectangle (12,6);
\draw[fill=white] (-1,0) rectangle (5,1);
\end{scope}
\begin{scope}[yscale=-1, yshift=3cm]
\draw[->-=.5] (10,6) -- (10,9);
\draw[-<-=.5] (2,6) -- (2,9);
\draw[-<-=.5] (-2,0) -- (-2,9);
\draw[-<-=.5] (2,1) -- (2,6);
\draw[->-=.5] (9,1) -- (9,6);
\draw[->-=.5, rounded corners] (4,0) -- (4,-1) -- (9,-1) -- (9,1);
\draw[fill=white] (0,5) rectangle (12,6);
\draw[fill=white] (-3,0) rectangle (5,1);
\end{scope}
\node at (-2,12) [above]{${\scriptstyle 1}$};
\node at (-2,-12) [below]{${\scriptstyle 1}$};
\node at (2,12) [above]{${\scriptstyle k}$};
\node at (2,-12) [below]{${\scriptstyle k}$};
\node at (9,12) [above]{${\scriptstyle l}$};
\node at (9,-12) [below]{${\scriptstyle l}$};
\node at (2.5,6) [right]{${\scriptstyle k-1}$};
\node at (2,-6) [left]{${\scriptstyle k}$};
\node at (8,6) [below right]{${\scriptstyle i}$};
\node at (8,-6) [left]{${\scriptstyle i}$};
\node at (11,0) [right]{${\scriptstyle l-i}$};
\node at (2,0) [right]{${\scriptstyle k-i}$};
}\,
\end{align*}
By Lemma~{\ref{lem2}}, $X(k,l;i)=0$ for $i>1$ and
\begin{align*}
X(k,l;1)&=-\frac{[k]}{[k+1]}\frac{(-1)^{k-1}}{[k]}
\,\tikz[baseline=-.6ex, scale=.1, yscale=.8]{
\draw[->-=.5] (3,-4) -- (3,9);
\draw[-<-=.5] (11,-9) -- (11,9);
\begin{scope}[yshift=3cm]
\draw[-<-=.5] (1,5) -- (1,4) -- (-2,4);
\draw[->-=.8] (2,5) -- (2,2) -- (-2,2);
\draw[-<-=.5] (10,6) -- (10,9);
\draw[->-=.5] (2,6) -- (2,9);
\draw[->-=.2, ->-=.8] (-2,-6) -- (-2,9);
\draw[fill=white] (0,5) rectangle (12,6);
\end{scope}
\begin{scope}[yscale=-1, yshift=3cm]
\draw[->-=.5] (10,6) -- (10,9);
\draw[-<-=.5] (2,6) -- (2,9);
\draw[-<-=.5] (-2,0) -- (-2,9);
\draw[-<-=.5] (2,1) -- (2,6);
\draw[->-=.5] (9,1) -- (9,6);
\draw[->-=.5, rounded corners] (4,0) -- (4,-1) -- (9,-1) -- (9,1);
\draw[fill=white] (0,5) rectangle (12,6);
\draw[fill=white] (-3,0) rectangle (5,1);
\end{scope}
\node at (-2,12) [above]{${\scriptstyle 1}$};
\node at (-2,-12) [below]{${\scriptstyle 1}$};
\node at (2,12) [above]{${\scriptstyle k}$};
\node at (2,-12) [below]{${\scriptstyle k}$};
\node at (9,12) [above]{${\scriptstyle l}$};
\node at (9,-12) [below]{${\scriptstyle l}$};
\node at (2,-6) [left]{${\scriptstyle k}$};
\node at (8,-6) [left]{${\scriptstyle 1}$};
\node at (11,0) [right]{${\scriptstyle l-1}$};
\node at (3,3) [right]{${\scriptstyle k-1}$};
}\, \\
&=
\frac{1}{[k+1]^2}
\,\tikz[baseline=-.6ex, scale=.1, yscale=.8]{
\draw[->-=.5] (3,-9) -- (3,9);
\draw[-<-=.5] (11,-9) -- (11,9);
\draw[->-=.5] (-2,-5) -- (-2,5);
\begin{scope}[yshift=3cm]
\draw[-<-=.5] (1,5) -- (1,4) -- (-2,4);
\draw[->-=.8] (2,5) -- (2,2) -- (-2,2);
\draw[-<-=.5] (10,6) -- (10,9);
\draw[->-=.5] (2,6) -- (2,9);
\draw[->-=.8] (-2,2) -- (-2,9);
\draw[fill=white] (0,5) rectangle (12,6);
\end{scope}
\begin{scope}[yscale=-1, yshift=3cm]
\draw[->-=.5] (1,5) -- (1,4) -- (-2,4);
\draw[-<-=.8] (2,5) -- (2,2) -- (-2,2);
\draw[->-=.5] (10,6) -- (10,9);
\draw[-<-=.5] (2,6) -- (2,9);
\draw[-<-=.8] (-2,2) -- (-2,9);
\draw[fill=white] (0,5) rectangle (12,6);
\end{scope}
\node at (-2,12) [above]{${\scriptstyle 1}$};
\node at (-2,-12) [below]{${\scriptstyle 1}$};
\node at (2,12) [above]{${\scriptstyle k}$};
\node at (2,-12) [below]{${\scriptstyle k}$};
\node at (9,12) [above]{${\scriptstyle l}$};
\node at (9,-12) [below]{${\scriptstyle l}$};
\node at (11,0) [right]{${\scriptstyle l-1}$};
\node at (3,0) [right]{${\scriptstyle k-1}$};
}\,
=
\frac{1}{[k+1]^2}
\,\tikz[baseline=-.6ex, scale=.1, yscale=.8]{
\draw[->-=.5] (3,-9) -- (3,9);
\draw[-<-=.5] (9,-9) -- (9,9);
\begin{scope}[yshift=3cm]
\draw[->-=.8, rounded corners] (1,5) -- (1,2) -- (-2,2) -- (-2,9);
\draw[-<-=.5] (8,6) -- (8,9);
\draw[->-=.5] (2,6) -- (2,9);
\draw[fill=white] (0,5) rectangle (12,6);
\end{scope}
\begin{scope}[yscale=-1, yshift=3cm]
\draw[-<-=.8, rounded corners] (1,5) -- (1,2) -- (-2,2) -- (-2,9);
\draw[->-=.5] (8,6) -- (8,9);
\draw[-<-=.5] (2,6) -- (2,9);
\draw[fill=white] (0,5) rectangle (12,6);
\end{scope}
\node at (-2,12) [above]{${\scriptstyle 1}$};
\node at (-2,-12) [below]{${\scriptstyle 1}$};
\node at (2,12) [above]{${\scriptstyle k}$};
\node at (2,-12) [below]{${\scriptstyle k}$};
\node at (8,12) [above]{${\scriptstyle l}$};
\node at (8,-12) [below]{${\scriptstyle l}$};
\node at (9,0) [right]{${\scriptstyle l-1}$};
\node at (3,0) [right]{${\scriptstyle k}$};
}\, .
\end{align*}
We can easily calculate $X(k,l;0)$ from Definition~{\ref{singledef}}:
\begin{align*}
X(k,l;0)
&=(\mathbf{1}_{+}\otimes P_{{+}^k{-}^l}^{{+}^k{-}^l})(P_{{+}^{k+1}}^{{+}^{k+1}}\otimes\mathbf{1}_{+})(\mathbf{1}_{+}\otimes P_{{+}^k{-}^l}^{{+}^k{-}^l})\\
&=\mathbf{1}_{+}\otimes P_{{+}^{k}{-}^{l}}^{{+}^{k}{-}^{l}}
-\frac{[k]}{[k+1]}
(\mathbf{1}_{+}\otimes P_{{+}^{k}{-}^{l}}^{{+}^{k}{-}^{l}})
(t_{{-}}^{{+}{+}}t_{{+}{+}}^{{-}}\otimes\mathbf{1}_{{+}^{k-1}{-}^{l}})(\mathbf{1}_{+}\otimes P_{{+}^{k}{-}^{l}}^{{+}^{k}{-}^{l}})
\end{align*}
We complete the proof by substituting the above solutions of $X(k,l;i)$ into
\[
P_{{+}^{k+1}{-}^{l}}^{{+}^{k+1}{-}^{l}}
=\sum_{i=0}^{\min\{k+1,l\}}
(-1)^i
\frac{{k+1\brack i}{l\brack i}}{{k+l+2\brack i}}
X(k,l;i)
=X(k,l;0)-\frac{[k+1][l]}{[k+l+2]}X(k,l;1).
\]
\end{proof}

\begin{LEM}\label{lem7}
\begin{align*}
\,\tikz[baseline=-.6ex, scale=.1]{
\begin{scope}[yshift=5cm]
\draw[-<-=.5] (-2,0) -- (-2,4);
\draw[->-=.5] (2,1) -- (2,4);
\draw[-<-=.5] (7,1) -- (7,4);
\draw[fill=white] (0,0) rectangle (9,1);
\node at (2,4) [above]{${\scriptstyle k-1}$};
\node at (7,4) [above]{${\scriptstyle l}$};
\end{scope}
\draw (-2,-3) -- (-2,5);
\draw[->-=.5] (0,0) -- (0,3) -- (-2,3);
\draw[->-=.5] (0,0) -- (0,-3) -- (-2,-3);
\draw[->-=.2, ->-=.8] (2,-5) -- (2,5);
\draw[-<-=.2, -<-=.8] (7,-5) -- (7,5);
\draw[fill=white] (-1,0) rectangle (9,1);
\begin{scope}[yshift=-5cm, yscale=-1]
\draw[-<-=.5] (2,1) -- (2,4);
\draw[->-=.5] (7,1) -- (7,4);
\draw[fill=white] (0,0) rectangle (9,1);
\node at (2,4) [below]{${\scriptstyle k}$};
\node at (7,4) [below]{${\scriptstyle l-1}$};
\end{scope}
}\,
=
\frac{[k+1]}{[k]}\left(1-\frac{[l+1]}{[k+1]{[k+l+1]}}\right)
\,\tikz[baseline=-.6ex, scale=.1]{
\begin{scope}[yshift=5cm]
\draw[-<-=.5] (-2,0) -- (-2,4);
\draw[->-=.5] (2,1) -- (2,4);
\draw[-<-=.5] (7,1) -- (7,4);
\draw[fill=white] (-1,0) rectangle (9,1);
\node at (2,4) [above]{${\scriptstyle k-1}$};
\node at (7,4) [above]{${\scriptstyle l}$};
\end{scope}
\draw[->-=.5] (-2,-5) -- (-2,5);
\draw[->-=.5] (0,5) -- (0,3) -- (-2,3);
\draw[->-=.5] (3,-5) -- (3,5);
\draw[-<-=.5] (7,-5) -- (7,5);
\begin{scope}[yshift=-5cm, yscale=-1]
\draw[-<-=.5] (2,1) -- (2,4);
\draw[->-=.5] (7,1) -- (7,4);
\draw[fill=white] (-3,0) rectangle (9,1);
\node at (2,4) [below]{${\scriptstyle k}$};
\node at (7,4) [below]{${\scriptstyle l-1}$};
\end{scope}
}\,
\end{align*}
\end{LEM}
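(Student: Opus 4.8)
The plan is to reduce the left-hand side by expanding the middle clasp $P_{{+}^{k}{-}^{l}}^{{+}^{k}{-}^{l}}$ along its leftmost ${+}$-strand, so that the strand curled on the far left of the diagram gets pulled through the clasp and closed up into a small number of internal loops. The natural engine for this is Kim's clasp expansion (Proposition~\ref{prop5}), applied with parameters shifted to $(k-1,l)$ so that it rewrites $P_{{+}^{k}{-}^{l}}^{{+}^{k}{-}^{l}}$ as the sum of the straight term $\mathbf{1}_{+}\otimes P_{{+}^{k-1}{-}^{l}}^{{+}^{k-1}{-}^{l}}$, a ${+}$-curl term with coefficient $-[k-1]/[k]$, and a ${+}{-}$-cap term with coefficient $-[l]/([k][k+l+1])$. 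Notice that these denominators $[k]$ and $[k][k+l+1]$ already match those of the target scalar, which I would first rewrite as $\tfrac{[k+1]}{[k]}\bigl(1-\tfrac{[l+1]}{[k+1][k+l+1]}\bigr)=\tfrac{[k+1]}{[k]}-\tfrac{[l+1]}{[k][k+l+1]}$; the two summands here will correspond to the two surviving terms of the expansion.

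After substituting this expansion into the left-hand side, each of the three terms is sandwiched between the top clasp $P_{{+}^{k-1}{-}^{l}}^{{+}^{k-1}{-}^{l}}$ and the bottom clasp $P_{{+}^{k}{-}^{l-1}}^{{+}^{k}{-}^{l-1}}$ and composed with the cup/cap connecting to the far-left strand. At this stage I would invoke the absorption properties of Proposition~\ref{doubleproj} and Proposition~\ref{multiprop}: any term in which the expansion creates a $t$-type or $d$-type turnback meeting one of the three clasps is annihilated. This is exactly the mechanism that kills the cross terms and also the bulk of the curl/cap contributions, leaving only the configurations in which the pulled-through strand closes into an isolated circle or a bigon. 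The straightening of the remaining curled strand can be handled as in the computation of $X(k,l;1)$ in Lemma~\ref{lem2}, where a single $A_2$-web relation turns the curl into the desired straight configuration on the right-hand side.

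It then remains to collect the surviving scalars. The isolated circle contributes $[3]$ and each bigon contributes $[2]$, and these combine with the expansion coefficients $1$, $[k-1]/[k]$, and $[l]/([k][k+l+1])$. Using the quantum-integer identity $[2][n]=[n+1]+[n-1]$ (already used in Proposition~\ref{pfCK2}) together with the analogous product relations, I expect the circle/bigon evaluations to promote the numerators $[k-1]$ and $[l]$ to $[k+1]$ and $[l+1]$, producing precisely $\tfrac{[k+1]}{[k]}-\tfrac{[l+1]}{[k][k+l+1]}$ times the right-hand diagram.

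The main obstacle is the bookkeeping in the middle step: tracking exactly which of the expanded terms survive the absorption, orienting the pulled-through strand consistently through the trivalent vertices, and evaluating the resulting loops so that the quantum-integer simplification lands on the stated coefficient rather than a superficially different equivalent expression. As a safeguard for fixing the scalar independently of this computation, I would note that both sides lie in the space of webs killed by $t$- and $d$-turnbacks against the outer clasps, which by the uniqueness arguments underlying Proposition~\ref{eeprop} is one-dimensional; hence the two sides are automatically proportional, and the scalar can be pinned down by capping the diagrams off into a single closed web and comparing the two resulting quantum-integer expressions.
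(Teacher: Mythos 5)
Your opening move is the paper's: expand the middle clasp $P_{{+}^{k}{-}^{l}}^{{+}^{k}{-}^{l}}$ by Proposition~\ref{prop5} (with parameters shifted to $(k-1,l)$, giving the coefficients $1$, $-\tfrac{[k-1]}{[k]}$, $-\tfrac{[l]}{[k][k+l+1]}$ exactly as you say), let the outer clasps absorb what they can, and use the bigon relation together with $[2][k]=[k+1]+[k-1]$ to turn $[2]-\tfrac{[k-1]}{[k]}$ into $\tfrac{[k+1]}{[k]}$. That correctly produces the first summand of the target scalar. The gap is in how you propose to get the second summand $-\tfrac{[l+1]}{[k][k+l+1]}$. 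The third term of the expansion is not finished off by loop evaluations: no isolated circle (hence no factor $[3]$) occurs anywhere in this computation, and no bigon acting on that term converts the numerator $[l]$ into $[l+1]$. What the paper actually does (equation~(\ref{eqprop5})) is apply Proposition~\ref{prop5} a \emph{second} time, now to the clasp read as $P_{{-}^{l}{+}^{k-1}}^{{-}^{l}{+}^{k-1}}$, i.e.\ with the roles of ${+}$ and ${-}$ exchanged; after absorption only two terms survive and the same telescoping $[2]-\tfrac{[l-1]}{[l]}=\tfrac{[l+1]}{[l]}$ yields a factor $\tfrac{[l+1]}{[l]}$, which multiplied by $\tfrac{[l]}{[k][k+l+1]}$ gives the required $\tfrac{[l+1]}{[k][k+l+1]}$. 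Your proposal never identifies this second expansion, and ``analogous product relations'' will not substitute for it.

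Your fallback --- both sides lie in a one-dimensional space of clasped morphisms, so it suffices to compare closures --- is a legitimate alternative in principle (it is exactly how the paper proves Lemma~\ref{lemOY}), but as stated it does not close the gap. The one-dimensionality of the space of morphisms from the clasped ${+}^{k}{-}^{l-1}$ to the clasped ${-}\otimes{+}^{k-1}{-}^{l}$ does not follow from the uniqueness statements you cite, which concern idempotents in endomorphism spaces; within the paper's purely skein-theoretic framework it would need its own counting argument for $A_2$ basis webs surviving the clasps, and the closure evaluation of both sides would still have to be carried out explicitly.
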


\begin{proof}
Apply Proposition~\ref{prop5} to $P_{{+}^{k}{-}^{l}}^{{+}^{k}{-}^{l}}$ as follows:
\begin{align*}
\,\tikz[baseline=-.6ex, scale=.1]{
\begin{scope}[yshift=5cm]
\draw[-<-=.5] (-2,0) -- (-2,4);
\draw[->-=.5] (2,1) -- (2,4);
\draw[-<-=.5] (7,1) -- (7,4);
\draw[fill=white] (0,0) rectangle (9,1);
\node at (2,4) [above]{${\scriptstyle k-1}$};
\node at (7,4) [above]{${\scriptstyle l}$};
\end{scope}
\draw (-2,-3) -- (-2,5);
\draw[->-=.5] (0,0) -- (0,3) -- (-2,3);
\draw[->-=.5] (0,0) -- (0,-3) -- (-2,-3);
\draw[->-=.2, ->-=.8] (2,-5) -- (2,5);
\draw[-<-=.2, -<-=.8] (7,-5) -- (7,5);
\draw[fill=white] (-1,0) rectangle (9,1);
\begin{scope}[yshift=-5cm, yscale=-1]
\draw[-<-=.5] (2,1) -- (2,4);
\draw[->-=.5] (7,1) -- (7,4);
\draw[fill=white] (0,0) rectangle (9,1);
\node at (2,4) [below]{${\scriptstyle k}$};
\node at (7,4) [below]{${\scriptstyle l-1}$};
\end{scope}
}\, 
&=
\,\tikz[baseline=-.6ex, scale=.1]{
\begin{scope}[yshift=5cm]
\draw[-<-=.5] (-2,0) -- (-2,4);
\draw[->-=.5] (2,1) -- (2,4);
\draw[-<-=.5] (6,1) -- (6,4);
\draw[fill=white] (0,0) rectangle (9,1);
\node at (2,4) [above]{${\scriptstyle k-1}$};
\node at (6,4) [above]{${\scriptstyle l}$};
\end{scope}
\draw[->-=.2] (-2,-2) -- (-2,5);
\draw[->-=.5] (0,0) -- (0,3) -- (-2,3);
\draw (5,0) -- (5,-2) -- (-2,-2);
\draw[->-=.5] (2,0) -- (2,5);
\draw[->-=.2] (2,-5) -- (2,0);
\draw[-<-=.2, -<-=.8] (7,-5) -- (7,5);
\draw[fill=white] (-1,0) rectangle (9,1);
\draw[fill=white] (0,-3) rectangle (4,-1);
\draw (0,-3) -- (4,-1);
\begin{scope}[yshift=-5cm, yscale=-1]
\draw[-<-=.5] (2,1) -- (2,4);
\draw[->-=.5] (7,1) -- (7,4);
\draw[fill=white] (0,0) rectangle (9,1);
\node at (2,4) [below]{${\scriptstyle k}$};
\node at (7,4) [below]{${\scriptstyle l-1}$};
\end{scope}
}\,
=
\,\tikz[baseline=-.6ex, scale=.1]{
\begin{scope}[yshift=5cm]
\draw[-<-=.5] (-3,0) -- (-3,4);
\draw[->-=.5] (2,1) -- (2,4);
\draw[-<-=.5] (6,1) -- (6,4);
\draw[fill=white] (-2,0) rectangle (9,1);
\node at (2,4) [above]{${\scriptstyle k-1}$};
\node at (6,4) [above]{${\scriptstyle l}$};
\end{scope}
\draw[->-=.5] (-3,-1) -- (-3,5);
\draw[->-=.2, ->-=.8] (-2,-5) -- (-2,3);
\draw (-3,3) -- (-2,3);
\draw (-3,-1) -- (-2,-1);
\draw[-<-=.8] (-2,-2) -- (5,-2) -- (5,5);
\draw[->-=.5] (2,0) -- (2,5);
\draw[->-=.2] (2,-5) -- (2,0);
\draw[-<-=.5] (7,-5) -- (7,5);
\draw[fill=white] (0,-3) rectangle (4,-1);
\draw (0,-3) -- (4,-1);
\begin{scope}[yshift=-5cm, yscale=-1]
\draw[-<-=.5] (0,1) -- (0,4);
\draw[->-=.5] (7,1) -- (7,4);
\draw[fill=white] (-4,0) rectangle (9,1);
\node at (0,4) [below]{${\scriptstyle k}$};
\node at (7,4) [below]{${\scriptstyle l-1}$};
\end{scope}
}\,
-\frac{[k-1]}{[k]}
\,\tikz[baseline=-.6ex, scale=.1]{
\begin{scope}[yshift=5cm]
\draw[-<-=.5] (-3,0) -- (-3,4);
\draw[->-=.5] (1,1) -- (1,4);
\draw[-<-=.5] (6,1) -- (6,4);
\draw[fill=white] (-2,0) rectangle (9,1);
\node at (1,4) [above]{${\scriptstyle k-1}$};
\node at (6,4) [above]{${\scriptstyle l}$};
\end{scope}
\draw (-3,1) -- (-3,5);
\draw[->-=.2, -<-=.5] (-2,-5) -- (-2,4);
\draw[->-=.7] (-3,4) -- (0,4) -- (0,5);
\draw[-<-=.5] (-2,2) -- (0,2) -- (0,1);
\draw (-3,1) -- (-2,1);
\draw[-<-=.9] (-2,-2) -- (5,-2) -- (5,0);
\draw[->-=.5] (2,0) -- (2,5);
\draw[->-=.2] (2,-5) -- (2,0);
\draw[-<-=.2, -<-=.8] (7,-5) -- (7,5);
\draw[fill=white] (-1,0) rectangle (9,1);
\draw[fill=white] (0,-3) rectangle (4,-1);
\draw (0,-3) -- (4,-1);
\begin{scope}[yshift=-5cm, yscale=-1]
\draw[-<-=.5] (0,1) -- (0,4);
\draw[->-=.5] (7,1) -- (7,4);
\draw[fill=white] (-4,0) rectangle (9,1);
\node at (0,4) [below]{${\scriptstyle k}$};
\node at (7,4) [below]{${\scriptstyle l-1}$};
\end{scope}
}\,
-\frac{[l]}{[k][k+l+1]}
\,\tikz[baseline=-.6ex, scale=.1]{
\begin{scope}[yshift=5cm]
\draw[-<-=.5] (-3,0) -- (-3,4);
\draw[->-=.5] (2,1) -- (2,4);
\draw[-<-=.5] (6,1) -- (6,4);
\draw[fill=white] (-2,0) rectangle (9,1);
\node at (2,4) [above]{${\scriptstyle k-1}$};
\node at (6,4) [above]{${\scriptstyle l}$};
\end{scope}
\draw (-3,2) -- (-3,5);
\draw[->-=.2, -<-=.8] (-3,-5) -- (-3,2);
\draw[-<-=.5] (-3,4) -- (0,4) -- (0,5);
\draw[->-=.5] (-3,2) -- (0,2) -- (0,1);
\draw[-<-=.9] (-3,-2) -- (5,-2) -- (5,0);
\draw[->-=.5] (2,0) -- (2,5);
\draw[->-=.2] (2,-5) -- (2,0);
\draw[-<-=.2, -<-=.8] (7,-5) -- (7,5);
\draw[fill=white] (-1,0) rectangle (9,1);\
\draw[fill=white] (0,-3) rectangle (4,-1);
\draw (0,-3) -- (4,-1);
\begin{scope}[yshift=-5cm, yscale=-1]
\draw[-<-=.5] (0,1) -- (0,4);
\draw[->-=.5] (7,1) -- (7,4);
\draw[fill=white] (-4,0) rectangle (9,1);
\node at (0,4) [below]{${\scriptstyle k}$};
\node at (7,4) [below]{${\scriptstyle l-1}$};
\end{scope}
}\,\\
&=\left([2]-\frac{[k-1]}{[k]}\right)
\,\tikz[baseline=-.6ex, scale=.1]{
\begin{scope}[yshift=5cm]
\draw[-<-=.5] (-3,0) -- (-3,4);
\draw[->-=.5] (2,1) -- (2,4);
\draw[-<-=.5] (6,1) -- (6,4);
\draw[fill=white] (-2,0) rectangle (9,1);
\node at (2,4) [above]{${\scriptstyle k-1}$};
\node at (6,4) [above]{${\scriptstyle l}$};
\end{scope}
\draw[->-=.5] (-3,-5) -- (-3,5);
\draw[-<-=.8] (-3,1) -- (5,1) -- (5,5);
\draw[->-=.5] (2,2) -- (2,5);
\draw[->-=.5] (2,-5) -- (2,2);
\draw[-<-=.5] (7,-5) -- (7,5);
\draw[fill=white] (0,0) rectangle (4,2);
\draw (0,0) -- (4,2);
\begin{scope}[yshift=-5cm, yscale=-1]
\draw[-<-=.5] (0,1) -- (0,4);
\draw[->-=.5] (7,1) -- (7,4);
\draw[fill=white] (-4,0) rectangle (9,1);
\node at (0,4) [below]{${\scriptstyle k}$};
\node at (7,4) [below]{${\scriptstyle l-1}$};
\end{scope}
}\,
-\frac{[k-1]}{[k]}
\,\tikz[baseline=-.6ex, scale=.1]{
\begin{scope}[yshift=5cm]
\draw[-<-=.5] (-2,-1) -- (-2,4);
\draw[->-=.5] (1,1) -- (1,4);
\draw[-<-=.5] (6,1) -- (6,4);
\draw[fill=white] (-1,0) rectangle (9,1);
\node at (1,4) [above]{${\scriptstyle k-1}$};
\node at (6,4) [above]{${\scriptstyle l}$};
\end{scope}
\draw[->-=.2, -<-=.8] (-2,-5) -- (-2,2);
\draw (-2,4) -- (0,4) -- (0,5);
\draw (-2,2) -- (0,2) -- (0,1);
\draw[-<-=.9] (-2,-2) -- (5,-2) -- (5,0);
\draw[->-=.5] (2,0) -- (2,5);
\draw[->-=.2] (2,-5) -- (2,0);
\draw[-<-=.2, -<-=.8] (7,-5) -- (7,5);
\draw[fill=white] (-1,0) rectangle (9,1);
\draw[fill=white] (0,-3) rectangle (4,-1);
\draw (0,-3) -- (4,-1);
\begin{scope}[yshift=-5cm, yscale=-1]
\draw[-<-=.5] (0,1) -- (0,4);
\draw[->-=.5] (7,1) -- (7,4);
\draw[fill=white] (-4,0) rectangle (9,1);
\node at (0,4) [below]{${\scriptstyle k}$};
\node at (7,4) [below]{${\scriptstyle l-1}$};
\end{scope}
}\,
-\frac{[l]}{[k][k+l+1]}
\,\tikz[baseline=-.6ex, scale=.1]{
\begin{scope}[yshift=5cm]
\draw[-<-=.5] (-3,0) -- (-3,4);
\draw[->-=.5] (2,1) -- (2,4);
\draw[-<-=.5] (6,1) -- (6,4);
\draw[fill=white] (-2,0) rectangle (9,1);
\node at (2,4) [above]{${\scriptstyle k-1}$};
\node at (6,4) [above]{${\scriptstyle l}$};
\end{scope}
\draw (-3,2) -- (-3,5);
\draw[->-=.2, -<-=.8] (-3,-5) -- (-3,2);
\draw[-<-=.5] (-3,4) -- (0,4) -- (0,5);
\draw[->-=.5] (-3,2) -- (0,2) -- (0,1);
\draw[-<-=.9] (-3,-2) -- (5,-2) -- (5,0);
\draw[->-=.5] (2,0) -- (2,5);
\draw[->-=.2] (2,-5) -- (2,0);
\draw[-<-=.2, -<-=.8] (7,-5) -- (7,5);
\draw[fill=white] (-1,0) rectangle (9,1);\
\draw[fill=white] (0,-3) rectangle (4,-1);
\draw (0,-3) -- (4,-1);
\begin{scope}[yshift=-5cm, yscale=-1]
\draw[-<-=.5] (0,1) -- (0,4);
\draw[->-=.5] (7,1) -- (7,4);
\draw[fill=white] (-4,0) rectangle (9,1);
\node at (0,4) [below]{${\scriptstyle k}$};
\node at (7,4) [below]{${\scriptstyle l-1}$};
\end{scope}
}\,\\
&=\frac{[k+1]}{[k]}
\,\tikz[baseline=-.6ex, scale=.1]{
\begin{scope}[yshift=5cm]
\draw[-<-=.5] (-3,0) -- (-3,4);
\draw[->-=.5] (2,1) -- (2,4);
\draw[-<-=.5] (6,1) -- (6,4);
\draw[fill=white] (-2,0) rectangle (9,1);
\node at (2,4) [above]{${\scriptstyle k-1}$};
\node at (6,4) [above]{${\scriptstyle l}$};
\end{scope}
\draw[->-=.5] (-3,-5) -- (-3,5);
\draw[-<-=.5] (-3,4) -- (0,4) -- (0,5);
\draw[->-=.5] (2,-5) -- (2,5);
\draw[-<-=.5] (7,-5) -- (7,5);
\begin{scope}[yshift=-5cm, yscale=-1]
\draw[-<-=.5] (0,1) -- (0,4);
\draw[->-=.5] (7,1) -- (7,4);
\draw[fill=white] (-4,0) rectangle (9,1);
\node at (0,4) [below]{${\scriptstyle k}$};
\node at (7,4) [below]{${\scriptstyle l-1}$};
\end{scope}
}\,
-\frac{[l]}{[k][k+l+1]}
\,\tikz[baseline=-.6ex, scale=.1]{
\begin{scope}[yshift=5cm]
\draw[-<-=.5] (-3,0) -- (-3,4);
\draw[->-=.5] (2,1) -- (2,4);
\draw[-<-=.5] (6,1) -- (6,4);
\draw[fill=white] (-2,0) rectangle (9,1);
\node at (2,4) [above]{${\scriptstyle k-1}$};
\node at (6,4) [above]{${\scriptstyle l}$};
\end{scope}
\draw (-3,2) -- (-3,5);
\draw[->-=.2, -<-=.8] (-3,-5) -- (-3,2);
\draw[-<-=.5] (-3,4) -- (0,4) -- (0,5);
\draw[->-=.5] (-3,2) -- (0,2) -- (0,1);
\draw[-<-=.5] (-3,-1) -- (0,-1) -- (0,0);
\draw[->-=.5] (2,0) -- (2,5);
\draw[->-=.5] (2,-5) -- (2,0);
\draw[-<-=.2, -<-=.8] (7,-5) -- (7,5);
\draw[fill=white] (-1,0) rectangle (9,1);
\begin{scope}[yshift=-5cm, yscale=-1]
\draw[-<-=.5] (0,1) -- (0,4);
\draw[->-=.5] (7,1) -- (7,4);
\draw[fill=white] (-4,0) rectangle (9,1);
\node at (0,4) [below]{${\scriptstyle k}$};
\node at (7,4) [below]{${\scriptstyle l-1}$};
\end{scope}
}\, .
\end{align*}
Once again, we apply Proposition~\ref{prop5} to $P_{{-}^{l}{+}^{k-1}}^{{-}^{l}{+}^{k-1}}$:
\begin{align}\label{eqprop5}
\,\tikz[baseline=-.6ex, scale=.1]{
\begin{scope}[yshift=5cm]
\draw[-<-=.5] (-3,0) -- (-3,4);
\draw[->-=.5] (2,1) -- (2,4);
\draw[-<-=.5] (8,1) -- (8,4);
\draw[fill=white] (-2,0) rectangle (11,1);
\node at (2,4) [above]{${\scriptstyle k-1}$};
\node at (8,4) [above]{${\scriptstyle l}$};
\end{scope}
\draw (-3,2) -- (-3,5);
\draw[->-=.2, -<-=.8] (-3,-5) -- (-3,2);
\draw[-<-=.5] (-3,4) -- (0,4) -- (0,5);
\draw[->-=.5] (-3,2) -- (0,2) -- (0,1);
\draw[-<-=.5] (-3,-1) -- (0,-1) -- (0,0);
\draw[->-=.5] (2,0) -- (2,5);
\draw[->-=.5] (2,-5) -- (2,0);
\draw[-<-=.2, -<-=.8] (9,-5) -- (9,5);
\draw[fill=white] (-1,0) rectangle (11,1);
\node at (2,-2) [right]{${\scriptstyle k-1}$};
\node at (9,-2) [right]{${\scriptstyle l-1}$};
\begin{scope}[yshift=-5cm, yscale=-1]
\draw[-<-=.5] (0,1) -- (0,4);
\draw[->-=.5] (9,1) -- (9,4);
\draw[fill=white] (-4,0) rectangle (11,1);
\node at (0,4) [below]{${\scriptstyle k}$};
\node at (9,4) [below]{${\scriptstyle l-1}$};
\end{scope}
}\,
&=
\,\tikz[baseline=-.6ex, scale=.1]{
\begin{scope}[yshift=5cm]
\draw[-<-=.5] (-3,0) -- (-3,4);
\draw[->-=.5] (1,1) -- (1,4);
\draw[-<-=.5] (8,1) -- (8,4);
\draw[fill=white] (-2,0) rectangle (11,1);
\node at (1,4) [above]{${\scriptstyle k-1}$};
\node at (8,4) [above]{${\scriptstyle l}$};
\end{scope}
\draw (-3,2) -- (-3,5);
\draw[->-=.2, -<-=.8] (-3,-5) -- (-3,2);
\draw[-<-=.5] (-3,4) -- (0,4) -- (0,5);
\draw[->-=.5] (-3,2) -- (0,2) -- (0,1);
\draw[-<-=.5] (-3,-1) -- (0,-1) -- (0,0);
\draw[-<-=.5] (2,0) -- (2,5);
\draw[-<-=.5] (2,-5) -- (2,0);
\draw[->-=.2, ->-=.8] (9,-5) -- (9,5);
\draw[fill=white] (-1,0) rectangle (11,1);
\node at (2,-2) [right]{${\scriptstyle l-1}$};
\node at (9,-2) [right]{${\scriptstyle k-1}$};
\begin{scope}[yshift=-5cm, yscale=-1]
\draw[-<-=.5] (0,1) -- (0,4);
\draw[->-=.5] (9,1) -- (9,4);
\draw[fill=white] (-4,0) rectangle (11,1);
\node at (0,4) [below]{${\scriptstyle k}$};
\node at (9,4) [below]{${\scriptstyle l-1}$};
\end{scope}
}\, 
=
\,\tikz[baseline=-.6ex, scale=.1]{
\begin{scope}[yshift=5cm]
\draw[-<-=.5] (-3,0) -- (-3,4);
\draw[->-=.5] (1,1) -- (1,4);
\draw[-<-=.5] (8,1) -- (8,4);
\draw[fill=white] (-2,0) rectangle (11,1);
\node at (1,4) [above]{${\scriptstyle k-1}$};
\node at (8,4) [above]{${\scriptstyle l}$};
\end{scope}
\draw (-3,2) -- (-3,5);
\draw[->-=.2, -<-=.8] (-3,-5) -- (-3,2);
\draw[-<-=.5] (-3,4) -- (0,4) -- (0,5);
\draw[->-=.5] (-3,2) -- (0,2) -- (0,-1) -- (-3,-1);
\draw[-<-=.5] (2,0) -- (2,5);
\draw (2,-5) -- (2,0);
\draw[->-=.8] (9,-5) -- (9,5);
\node at (2,-2) [right]{${\scriptstyle l-1}$};
\node at (9,-2) [right]{${\scriptstyle k-1}$};
\begin{scope}[yshift=-5cm, yscale=-1]
\draw[-<-=.5] (0,1) -- (0,4);
\draw[->-=.5] (9,1) -- (9,4);
\draw[fill=white] (-4,0) rectangle (11,1);
\node at (0,4) [below]{${\scriptstyle k}$};
\node at (9,4) [below]{${\scriptstyle l-1}$};
\end{scope}
}\,
-\frac{[l-1]}{[l]}
\,\tikz[baseline=-.6ex, scale=.1]{
\begin{scope}[yshift=5cm]
\draw[-<-=.5] (-3,0) -- (-3,4);
\draw[->-=.5] (1,1) -- (1,4);
\draw[-<-=.5] (8,1) -- (8,4);
\draw[fill=white] (-2,0) rectangle (11,1);
\node at (1,4) [above]{${\scriptstyle k-1}$};
\node at (8,4) [above]{${\scriptstyle l}$};
\end{scope}
\draw (-3,2) -- (-3,5);
\draw[->-=.2, -<-=.8] (-3,-5) -- (-3,2);
\draw[-<-=.5] (-3,4) -- (-1,4) -- (-1,5);
\draw[-<-=.5] (0,2) -- (0,5);
\draw[->-=.5] (0,-1) -- (0,-5);
\draw[-<-=.5] (-3,2) -- (0,2) -- (0,-1) -- (-3,-1);
\draw[-<-=.5] (2,0) -- (2,5);
\draw (2,-5) -- (2,0);
\draw[->-=.8] (9,-5) -- (9,5);
\node at (2,-2) [right]{${\scriptstyle l-2}$};
\node at (9,-2) [right]{${\scriptstyle k-1}$};
\begin{scope}[yshift=-5cm, yscale=-1]
\draw[-<-=.5] (0,1) -- (0,4);
\draw[->-=.5] (9,1) -- (9,4);
\draw[fill=white] (-4,0) rectangle (11,1);
\node at (0,4) [below]{${\scriptstyle k}$};
\node at (9,4) [below]{${\scriptstyle l-1}$};
\end{scope}
}\,\\
&=\frac{[l+1]}{[l]}
\,\tikz[baseline=-.6ex, scale=.1]{
\begin{scope}[yshift=5cm]
\draw[-<-=.5] (-3,0) -- (-3,4);
\draw[->-=.5] (1,1) -- (1,4);
\draw[-<-=.5] (8,1) -- (8,4);
\draw[fill=white] (-2,0) rectangle (11,1);
\node at (1,4) [above]{${\scriptstyle k-1}$};
\node at (8,4) [above]{${\scriptstyle l}$};
\end{scope}
\draw (-3,2) -- (-3,5);
\draw[->-=.5] (-3,-5) -- (-3,2);
\draw[-<-=.5] (-3,4) -- (0,4) -- (0,5);
\draw[-<-=.5] (2,0) -- (2,5);
\draw (2,-5) -- (2,0);
\draw[->-=.8] (9,-5) -- (9,5);
\node at (2,-2) [right]{${\scriptstyle l-1}$};
\node at (9,-2) [right]{${\scriptstyle k-1}$};
\begin{scope}[yshift=-5cm, yscale=-1]
\draw[-<-=.5] (0,1) -- (0,4);
\draw[->-=.5] (9,1) -- (9,4);
\draw[fill=white] (-4,0) rectangle (11,1);
\node at (0,4) [below]{${\scriptstyle k}$};
\node at (9,4) [below]{${\scriptstyle l-1}$};
\end{scope}
}\,
=\frac{[l+1]}{[l]}
\,\tikz[baseline=-.6ex, scale=.1]{
\begin{scope}[yshift=5cm]
\draw[-<-=.5] (-3,0) -- (-3,4);
\draw[->-=.5] (2,1) -- (2,4);
\draw[-<-=.5] (6,1) -- (6,4);
\draw[fill=white] (-2,0) rectangle (9,1);
\node at (2,4) [above]{${\scriptstyle k-1}$};
\node at (6,4) [above]{${\scriptstyle l}$};
\end{scope}
\draw[->-=.5] (-3,-5) -- (-3,5);
\draw[-<-=.5] (-3,4) -- (0,4) -- (0,5);
\draw[->-=.5] (2,-5) -- (2,5);
\draw[-<-=.5] (7,-5) -- (7,5);
\begin{scope}[yshift=-5cm, yscale=-1]
\draw[-<-=.5] (0,1) -- (0,4);
\draw[->-=.5] (7,1) -- (7,4);
\draw[fill=white] (-4,0) rectangle (9,1);
\node at (0,4) [below]{${\scriptstyle k}$};
\node at (7,4) [below]{${\scriptstyle l-1}$};
\end{scope}
}\, .\notag
\end{align}
\end{proof}

\begin{LEM}[Otsuki-Yamada~{\cite[Lemma~5.3]{OhtsukiYamada97}}]\label{lemOY}
\begin{align*}
\,\tikz[baseline=-.6ex, scale=.1]{
\begin{scope}[yshift=5cm]
\draw[->-=.5] (2,1) -- (2,4);
\draw[-<-=.5] (7,1) -- (7,4);
\draw[fill=white] (0,0) rectangle (9,1);
\node at (2,4) [above]{${\scriptstyle k}$};
\node at (7,4) [above]{${\scriptstyle l}$};
\end{scope}
\draw[-<-=.5] (-2,-3) -- (-2,3);
\draw (0,0) -- (0,3) -- (-2,3);
\draw (0,0) -- (0,-3) -- (-2,-3);
\draw[->-=.2, ->-=.8] (2,-5) -- (2,5);
\draw[-<-=.2, -<-=.8] (7,-5) -- (7,5);
\draw[fill=white] (-1,0) rectangle (9,1);
\node at (-2,0) [left]{${\scriptstyle 1}$};
\begin{scope}[yshift=-5cm, yscale=-1]
\draw[-<-=.5] (2,1) -- (2,4);
\draw[->-=.5] (7,1) -- (7,4);
\draw[fill=white] (0,0) rectangle (9,1);
\node at (2,4) [below]{${\scriptstyle k}$};
\node at (7,4) [below]{${\scriptstyle l}$};
\end{scope}
}\,
=
\frac{[k+2][k+l+3]}{[k+1][k+l+2]}P_{{+}^{k}{-}^{l}}^{{+}^{k}{-}^{l}}
\end{align*}
\end{LEM}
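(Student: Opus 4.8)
The plan is to split the statement into two independent tasks: first show that the left-hand side is \emph{forced} to be a scalar multiple of $P_{{+}^k{-}^l}^{{+}^k{-}^l}$, and only afterwards determine that scalar by an explicit skein reduction. For the first task I would write the left-hand side as $P_{{+}^k{-}^l}^{{+}^k{-}^l}\,w\,P_{{+}^k{-}^l}^{{+}^k{-}^l}$, where $w$ is the ``handle'' obtained by splitting off a single ${-}$-strand and reconnecting it on the left (after merging the three stacked clasps by Proposition~\ref{doubleproj}~(1)). Expanding $w$ in the $A_2$ basis and applying Lemma~\ref{decomp}, every basis web other than $\mathbf{1}_{{+}^k{-}^l}$ carries one of the boundary turnbacks $t$, $d$, or $b$ on its top or bottom side, and Proposition~\ref{doubleproj}~(2) together with Proposition~\ref{multiprop} shows that $P_{{+}^k{-}^l}^{{+}^k{-}^l}$ annihilates each of these from the relevant side. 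Hence all such webs die when sandwiched between the two clasps, and $P_{{+}^k{-}^l}^{{+}^k{-}^l}\,w\,P_{{+}^k{-}^l}^{{+}^k{-}^l}=c\,P_{{+}^k{-}^l}^{{+}^k{-}^l}$ for a single scalar $c=c(k,l)\in\mathbb{C}(q^{\frac16})$; this is the same collapse of the endomorphisms that underlies Proposition~\ref{doubleunique}.

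To pin down $c$ I would feed the handle diagram into the reduction machinery already in place. Expanding the top clasp along its first ${+}$-strand by Proposition~\ref{prop5} breaks the left-hand side into a leading term built on $P_{{+}^{k-1}{-}^l}^{{+}^{k-1}{-}^l}$ together with the two correction terms of that proposition. I expect that the correction terms either present a turnback to the surviving clasp and vanish by Proposition~\ref{doubleproj}~(2), or reassemble exactly into the configuration appearing on the left-hand side of Lemma~\ref{lem7}. Applying Lemma~\ref{lem7} then trades that configuration for the prefactor $\frac{[k+1]}{[k]}\bigl(1-\frac{[l+1]}{[k+1][k+l+1]}\bigr)$ times a strictly simpler handle diagram which, after reabsorbing the clasps as in the first step, is again a multiple of $P_{{+}^k{-}^l}^{{+}^k{-}^l}$. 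This yields a recursion for $c(k,l)$ in the index $k$, to be solved by induction with a base case read off directly from Definition~\ref{OYdef} (the one-column bubble evaluation).

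The main work, and the step I expect to be the genuine obstacle, is the quantum-integer bookkeeping needed to collapse the accumulated coefficients into the closed form $\frac{[k+2][k+l+3]}{[k+1][k+l+2]}$. The two identities I would lean on are $[2][m]=[m+1]+[m-1]$ and the Plücker-type relation $[m][n]-[m-1][n+1]=[n-m+1]$; the special case $[k+1][k+l+1]-[l+1]=[k][k+l+2]$ is precisely what simplifies the Lemma~\ref{lem7} prefactor to the clean ratio $\frac{[k+l+2]}{[k+l+1]}$, and telescoping these across the recursion should give the asserted value. As a sanity check, $\frac{[k+2][k+l+3]}{[k+1][k+l+2]}$ equals the quantum-dimension ratio $\dim_q V_{(k+1,l)}/\dim_q V_{(k,l)}$, matching the expectation that the ${-}$-handle shifts the highest weight by one fundamental box. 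The delicate points I anticipate are (i) verifying that exactly the stated correction terms survive the sandwiching while the rest are killed by Proposition~\ref{doubleproj}, and (ii) matching the orientations and ${+}/{-}$ labels on the small turnbacks so that Lemma~\ref{lem7} applies verbatim rather than in a reflected form; once these are settled the remaining algebra is routine.
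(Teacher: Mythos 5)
Your first step coincides with the paper's: the left-hand side is sandwiched between two copies of $P_{{+}^k{-}^l}^{{+}^k{-}^l}$, so Lemma~\ref{decomp} together with Proposition~\ref{doubleproj}~(2) kills every basis web except the identity and forces the product to be a scalar multiple $c\,P_{{+}^k{-}^l}^{{+}^k{-}^l}$. Where you part ways with the paper is in evaluating $c$. The paper sets up no recursion at all: it takes the closure of both sides. After the outer clasps are absorbed into the middle one by Proposition~\ref{doubleproj}~(1), the left-hand side is the partial closure of the clasp on ${+}^{k+1}{-}^{l}$ over its first strand, so its full closure is $\frac{[k+2][l+1][k+l+3]}{[2]}$, while the closure of the right-hand side is $c\cdot\frac{[k+1][l+1][k+l+2]}{[2]}$; dividing gives the stated coefficient in one line. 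This is exactly the quantum-dimension ratio you relegate to a ``sanity check'' at the end of your proposal---in the paper it \emph{is} the proof. Your recursive route via Proposition~\ref{prop5} is workable in principle, but as sketched it has two soft spots. First, the handle is a ${+}$-strand, not a ${-}$-strand (which is why the answer is the $(k+1,l)/(k,l)$ dimension ratio rather than the $(k,l+1)/(k,l)$ one), so the object to expand is $P_{{+}^{k+1}{-}^{l}}^{{+}^{k+1}{-}^{l}}$ along its first strand. Second, after that expansion the surviving outer clasps are still both of type ${+}^{k}{-}^{l}$, whereas the left-hand side of Lemma~\ref{lem7} is capped off by clasps of types ${+}^{k}{-}^{l-1}$ below and ${-}{+}^{k-1}{-}^{l}$ above, so that lemma does not apply verbatim and the coefficient bookkeeping would have to be redone from scratch. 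I would replace the entire second half of your argument by the one-line closure computation, which also explains where the quoted value $\frac{[k+1][l+1][k+l+2]}{[2]}$ of the closed clasp enters.
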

\begin{proof}
The $A_2$ web in the left-hand side should be expressed as a scalar multiplication of $P_{{+}^{k}{-}^{l}}^{{+}^{k}{-}^{l}}$.
This constant can be calculated by taking the closure of the $A_2$ webs in the both sides. We remark that the value of the closure of $P_{{+}^{k}{-}^{l}}^{{+}^{k}{-}^{l}}$ is $\frac{[k+1][l+1][k+l+2]}{[2]}$.
\end{proof}

\begin{PROP}\label{CK3}
For any positive integers $k,l\geq 1$,
\[
\langle P_{(k+1,l)}\rangle=X\langle P_{(k,l)}\rangle-\langle P_{(k-1,l+1)}\rangle-\langle P_{(k,l-1)}\rangle.
\]
\end{PROP}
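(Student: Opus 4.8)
The plan is to mirror the proofs of Propositions~\ref{pfCK1} and~\ref{pfCK2}, realizing $X\langle P_{(k,l)}\rangle$ as the class of a single idempotent object and splitting it as a \emph{threefold} direct sum. Since $X=\langle({+},\mathbf{1}_{+})\rangle$ and the tensor product of $\Kar$ is given objectwise, we have $X\langle P_{(k,l)}\rangle=\langle({+}^{k+1}{-}^{l},e)\rangle$ with $e=\mathbf{1}_{+}\otimes P_{{+}^{k}{-}^{l}}^{{+}^{k}{-}^{l}}$ an idempotent on ${+}^{k+1}{-}^{l}$. It therefore suffices to establish $({+}^{k+1}{-}^{l},e)\cong P_{(k+1,l)}\oplus P_{(k-1,l+1)}\oplus P_{(k,l-1)}$ in $\Kar$, since passing to $K_{0}(\Kar)$ then gives $X\langle P_{(k,l)}\rangle=\langle P_{(k+1,l)}\rangle+\langle P_{(k-1,l+1)}\rangle+\langle P_{(k,l-1)}\rangle$, which is the assertion. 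The three summands are exactly those predicted by the $\mathfrak{sl}_{3}$ fusion rule $V_{(1,0)}\otimes V_{(k,l)}\cong V_{(k+1,l)}\oplus V_{(k-1,l+1)}\oplus V_{(k,l-1)}$.

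Next I would set up the (three summand version of the) data of Lemma~\ref{projincl}. For the top summand take $p_{1}=\iota_{1}=P_{{+}^{k+1}{-}^{l}}^{{+}^{k+1}{-}^{l}}$, which targets $P_{(k+1,l)}$; that $p_{1}\circ\iota_{1}=\mathbf{1}$ is the idempotency of the clasp from Proposition~\ref{doubleproj}. For $P_{(k-1,l+1)}$ I would build $\iota_{2}$ and $p_{2}$ from the trivalent vertices $t_{-}^{{+}{+}}$ and $t_{{+}{+}}^{-}$, which convert a single ${-}$ into two adjacent ${+}$'s and back, composed with the clasp $P_{{+}^{k-1}{-}^{l+1}}^{{+}^{k-1}{-}^{l+1}}$ and a normalizing scalar; the identity $p_{2}\circ\iota_{2}=\mathbf{1}$ is precisely Lemma~\ref{lem7}, whose loop value $\tfrac{[k+1]}{[k]}\bigl(1-\tfrac{[l+1]}{[k+1][k+l+1]}\bigr)$ fixes that scalar. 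For $P_{(k,l-1)}$ I would build $\iota_{3}$ and $p_{3}$ from the cup $b^{{+}{-}}$ and cap $d_{{+}{-}}$ creating and annihilating a ${+}{-}$ pair, composed with $P_{{+}^{k}{-}^{l-1}}^{{+}^{k}{-}^{l-1}}$ and a normalizing scalar; here $p_{3}\circ\iota_{3}=\mathbf{1}$ follows from Lemma~\ref{lemOY} with $l$ replaced by $l-1$, whose bubble value is $\tfrac{[k+2][k+l+2]}{[k+1][k+l+1]}$.

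The off-diagonal relations $p_{i}\circ\iota_{j}=0$ for $i\neq j$ I would obtain from the annihilation properties of the clasps: every cross composite forces a trivalent vertex $t$ or a cap $d$ against a clasp, and these vanish by Propositions~\ref{multiprop}, \ref{eeprop} and~\ref{doubleproj}(2); the factorizations of $\iota_{2}p_{2}$ through an object with fewer ${+}$'s and of $\iota_{3}p_{3}$ through one with fewer ${-}$'s make the six cross terms transparent. The remaining completeness relation $\iota_{1}p_{1}+\iota_{2}p_{2}+\iota_{3}p_{3}=e$ is where Proposition~\ref{prop5} enters: expanding $P_{{+}^{k+1}{-}^{l}}^{{+}^{k+1}{-}^{l}}$ by that proposition writes $e$ as $P_{{+}^{k+1}{-}^{l}}^{{+}^{k+1}{-}^{l}}$ plus a $t$-vertex term with coefficient $\tfrac{[k]}{[k+1]}$ plus a cup/cap term with coefficient $\tfrac{[l]}{[k+1][k+l+2]}$, and I would match these two terms to $\iota_{2}p_{2}$ and $\iota_{3}p_{3}$ after substituting the bubble evaluations of Lemmas~\ref{lem7} and~\ref{lemOY}.

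The main obstacle is exactly this last matching: the raw coefficients of Proposition~\ref{prop5} are \emph{not} the reciprocals of the bubble values (in contrast to the clean Temperley--Lieb situation), so the terms of Proposition~\ref{prop5} are not themselves orthogonal idempotents. Reconciling them requires choosing the normalizing scalars so that $p_{i}\circ\iota_{i}=\mathbf{1}$ and then verifying, via quantum-integer identities of the type $[2][k]=[k+1]+[k-1]$ (already used in Proposition~\ref{pfCK2}), that the normalized idempotents $\iota_{2}p_{2}$ and $\iota_{3}p_{3}$ recombine to the two non-top terms of Proposition~\ref{prop5}. The second delicate point, absent in the earlier two-summand propositions, is that one must separate the complementary idempotent $e-P_{{+}^{k+1}{-}^{l}}^{{+}^{k+1}{-}^{l}}$ into its two distinct isotypic pieces rather than a single one, so the $t$-vertex and cup/cap contributions must be disentangled before the normalizations can be pinned down. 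Once Lemma~\ref{projincl} applies with these three pairs, the $K_{0}$ identity follows as above.
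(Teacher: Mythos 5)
Your proposal follows essentially the same route as the paper's proof: the paper also realizes $X\langle P_{(k,l)}\rangle$ via the idempotent $\mathbf{1}_{+}\otimes P_{{+}^{k}{-}^{l}}^{{+}^{k}{-}^{l}}$, uses Proposition~\ref{prop5} to peel off $P_{{+}^{k+1}{-}^{l}}^{{+}^{k+1}{-}^{l}}$, and splits the remainder $g$ into $P_{(k-1,l+1)}\oplus P_{(k,l-1)}$ via Lemma~\ref{projincl} with projections and inclusions built exactly as you describe from $t_{{+}{+}}^{-}$, $t_{-}^{{+}{+}}$, $d_{{+}{-}}$, $b^{{+}{-}}$ and the clasps, normalized using Lemma~\ref{lem7} and Lemma~\ref{lemOY}. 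Your three-summand packaging versus the paper's two-summand splitting of $g=e-P_{{+}^{k+1}{-}^{l}}^{{+}^{k+1}{-}^{l}}$ is only a cosmetic difference, and the coefficient matching you flag as the main obstacle is precisely the diagrammatic computation the paper carries out.
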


\begin{proof}
By Proposition~\ref{prop5}, 
\begin{align}\label{eqCK3}
\mathbf{1}_{+}\otimes P_{{+}^{k}{-}^{l}}^{{+}^{k}{-}^{l}}
-P_{{+}^{k+1}{-}^{l}}^{{+}^{k+1}{-}^{l}}
=
\frac{[k]}{[k+1]}
\,\tikz[baseline=-.6ex, scale=.1, yscale=.7]{
\begin{scope}[yshift=3cm]
\draw[->-=.5] (-2,0) -- (-2,4);
\draw[->-=.5] (2,1) -- (2,4);
\draw[-<-=.5] (7,1) -- (7,4);
\draw[fill=white] (-1,0) rectangle (9,1);
\node at (-2,4) [above]{${\scriptstyle 1}$};
\node at (2,4) [above]{${\scriptstyle k}$};
\node at (7,4) [above]{${\scriptstyle l}$};
\end{scope}
\draw (-2,-3) -- (-2,3);
\draw[->-=.5] (1,-3) -- (1,3);
\draw[-<-=.5] (8,-3) -- (8,3);
\draw[->-=.5] (-2,1) -- (0,1) -- (0,3);
\draw[-<-=.5] (-2,-1) -- (0,-1) -- (0,-3);
\node at (.5,0) [right]{${\scriptstyle k-1}$};
\node at (8,0) [right]{${\scriptstyle l}$};
\begin{scope}[yshift=-3cm, yscale=-1]
\draw[-<-=.5] (-2,0) -- (-2,4);
\draw[-<-=.5] (2,1) -- (2,4);
\draw[->-=.5] (7,1) -- (7,4);
\draw[fill=white] (-1,0) rectangle (9,1);
\node at (-2,4) [below]{${\scriptstyle 1}$};
\node at (2,4) [below]{${\scriptstyle k}$};
\node at (7,4) [below]{${\scriptstyle l}$};
\end{scope}
}\,
+\frac{[l]}{[k+1][k+l+2]}
\,\tikz[baseline=-.6ex, scale=.1, yscale=.7]{
\begin{scope}[yshift=3cm]
\draw[->-=.5] (-2,0) -- (-2,4);
\draw[->-=.5] (2,1) -- (2,4);
\draw[-<-=.5] (6,1) -- (6,4);
\draw[fill=white] (-1,0) rectangle (8,1);
\node at (-2,4) [above]{${\scriptstyle 1}$};
\node at (2,4) [above]{${\scriptstyle k}$};
\node at (6,4) [above]{${\scriptstyle l}$};
\end{scope}
\draw[->-=.5] (2,-3) -- (2,3);
\draw[-<-=.5] (6,-3) -- (6,3);
\draw (-2,3) to[out=south, in=west] (-1,1) to[out=east, in=south] (0,3);
\draw (-2,-3) to[out=north, in=west] (-1,-1) to[out=east, in=north](0,-3);
\node at (2,0) [right]{${\scriptstyle k}$};
\node at (6,0) [right]{${\scriptstyle l-1}$};
\begin{scope}[yshift=-3cm, yscale=-1]
\draw[-<-=.5] (-2,0) -- (-2,4);
\draw[-<-=.5] (2,1) -- (2,4);
\draw[->-=.5] (6,1) -- (6,4);
\draw[fill=white] (-1,0) rectangle (8,1);
\node at (-2,4) [below]{${\scriptstyle 1}$};
\node at (2,4) [below]{${\scriptstyle k}$};
\node at (6,4) [below]{${\scriptstyle l}$};
\end{scope}
}\,.
\end{align}
It is easy to see that the left-hand side is an idempotent and 
$X\langle P_{(k,l)}\rangle-\langle P_{(k+1,l)}\rangle$ in $K_0(\Kar)$.
We denote the right-hand side of the above $A_2$ web by $g$.
We consider morphisms $p_1\colon ({+}^{k+1}{-}^{l},g)\to P_{(k-1,l+1)}$ and $p_2\colon ({+}^{k+1}{-}^{l},g)\to P_{(k,l-1)}$ defined by 
\begin{align*}
p_1
&=\frac{[k]}{[k+1]}P_{{+}^{k-1}{-}^{l+1}}^{{+}^{k-1}{-}^{l+1}}
(t_{{+}{+}}^{{-}}\otimes P_{{+}^{k-1}{-}^{l}}^{{+}^{k-1}{-}^{l}})
(\mathbf{1}_{+}\otimes P_{{+}^{k}{-}^{l}}^{{+}^{k}{-}^{l}}),\\
p_2
&=\frac{[l][k+l+1]}{[l+1][k+l+2]}(d_{{+}{-}}\otimes P_{{+}^{k}{-}^{l-1}}^{{+}^{k}{-}^{l-1}})
(\mathbf{1}_{+}\otimes P_{{+}^{k}{-}^{l}}^{{+}^{k}{-}^{l}}).
\end{align*}
Let us confirm that $p_1$ and $p_2$ are morphisms in $\Kar$.
By a similar way to (\ref{eqprop5}) and using Lemma~\ref{lem7},
\begin{align*}
&P_{{+}^{k-1}{-}^{l+1}}^{{+}^{k-1}{-}^{l+1}}
P_{{+}^{k-1}{-}^{l+1}}^{{+}^{k-1}{-}^{l+1}}
(t_{{+}{+}}^{{-}}\otimes P_{{+}^{k-1}{-}^{l}}^{{+}^{k-1}{-}^{l}})
(\mathbf{1}_{+}\otimes P_{{+}^{k}{-}^{l}}^{{+}^{k}{-}^{l}})g\\
&\quad=\frac{[k]}{[k+1]}
\,\tikz[baseline=-.6ex, scale=.1, yscale=-1]{
\begin{scope}[yshift=5cm]
\draw[-<-=.5] (-3,0) -- (-3,4);
\draw[-<-=.5] (2,1) -- (2,4);
\draw[->-=.5] (8,1) -- (8,4);
\draw[fill=white] (-2,0) rectangle (11,1);
\node at (2,4) [below]{${\scriptstyle k}$};
\node at (8,4) [below]{${\scriptstyle l}$};
\end{scope}
\draw (-3,2) -- (-3,5);
\draw[->-=.2, -<-=.8] (-3,-5) -- (-3,2);
\draw[-<-=.5] (-3,4) -- (0,4) -- (0,5);
\draw[->-=.5] (-3,2) -- (0,2) -- (0,1);
\draw[-<-=.5] (-3,-1) -- (0,-1) -- (0,0);
\draw[-<-=.5] (2,0) -- (2,5);
\draw[-<-=.5] (2,-5) -- (2,0);
\draw[->-=.5] (9,0) -- (9,5);
\draw[->-=.5] (9,-5) -- (9,0);
\draw[fill=white] (-1,0) rectangle (11,1);
\node at (2,-2) [right]{${\scriptstyle k-1}$};
\node at (9,-2) [right]{${\scriptstyle l}$};
\begin{scope}[yshift=-5cm, yscale=-1]
\draw[->-=.5] (0,1) -- (0,4);
\draw[-<-=.5] (9,1) -- (9,4);
\draw[fill=white] (-4,0) rectangle (11,1);
\node at (0,4) [above]{${\scriptstyle k-1}$};
\node at (9,4) [above]{${\scriptstyle l+1}$};
\end{scope}
}\,
+\frac{[l]}{[k+1][k+l+2]}
\,\tikz[baseline=-.6ex, scale=.1]{
\begin{scope}[yshift=5cm]
\draw[->-=.5] (0,1) -- (0,4);
\draw[-<-=.5] (6,1) -- (6,4);
\draw[fill=white] (-3,0) rectangle (9,1);
\node at (0,4) [above]{${\scriptstyle k-1}$};
\node at (6,4) [above]{${\scriptstyle l+1}$};
\end{scope}
\draw (-2,-2) -- (-2,5);
\draw (0,-6) -- (0,-3) -- (-2,-3) -- (-2,-6);
\draw[->-=.5] (0,0) -- (0,3) -- (-2,3);
\draw[->-=.5] (0,0) -- (0,-2) -- (-2,-2);
\draw[->-=.5] (2,-5) -- (2,0);
\draw[->-=.5] (3,0) -- (3,5);
\draw[-<-=.5] (6,-5) -- (6,0);
\draw[-<-=.5] (7,0) -- (7,5);
\draw[fill=white] (-1,0) rectangle (9,1);
\node at (6,-3) [right]{${\scriptstyle l-1}$};
\node at (7,3) [right]{${\scriptstyle l}$};
\begin{scope}[yshift=-5cm, yscale=-1]
\draw[-<-=.5] (-2,1) -- (-2,4);
\draw[-<-=.5] (2,1) -- (2,4);
\draw[->-=.5] (7,1) -- (7,4);
\draw[fill=white] (-1,0) rectangle (9,1);
\node at (-2,4) [below]{${\scriptstyle 1}$};
\node at (2,4) [below]{${\scriptstyle k}$};
\node at (7,4) [below]{${\scriptstyle l}$};
\end{scope}
}\,\\
&\quad=
\,\tikz[baseline=-.6ex, scale=.1, yscale=-.8]{
\begin{scope}[yshift=5cm]
\draw[-<-=.5] (-3,0) -- (-3,4);
\draw[-<-=.5] (1,1) -- (1,4);
\draw[->-=.5] (9,1) -- (9,4);
\draw[fill=white] (-2,0) rectangle (11,1);
\node at (1,4) [below]{${\scriptstyle k}$};
\node at (9,4) [below]{${\scriptstyle l}$};
\end{scope}
\draw (-3,2) -- (-3,5);
\draw[->-=.5] (-3,-5) -- (-3,2);
\draw[-<-=.5] (-3,3) -- (0,3) -- (0,5);
\draw[-<-=.5] (2,0) -- (2,5);
\draw (2,-5) -- (2,0);
\draw[->-=.5] (9,0) -- (9,5);
\draw (9,-5) -- (9,0);
\node at (2,-2) [right]{${\scriptstyle k-1}$};
\node at (9,-2) [right]{${\scriptstyle l}$};
\begin{scope}[yshift=-5cm, yscale=-1]
\draw[->-=.5] (0,1) -- (0,4);
\draw[-<-=.5] (8,1) -- (8,4);
\draw[fill=white] (-4,0) rectangle (11,1);
\node at (0,4) [above]{${\scriptstyle k-1}$};
\node at (8,4) [above]{${\scriptstyle l+1}$};
\end{scope}
}\,
=P_{{+}^{k-1}{-}^{l+1}}^{{+}^{k-1}{-}^{l+1}}
(t_{{+}{+}}^{{-}}\otimes P_{{+}^{k-1}{-}^{l}}^{{+}^{k-1}{-}^{l}})
(\mathbf{1}_{+}\otimes P_{{+}^{k}{-}^{l}}^{{+}^{k}{-}^{l}}).
\end{align*}

By Lemma~\ref{lem7} and Lemma~\ref{lemOY},
\begin{align*}
&P_{{+}^{k}{-}^{l-1}}^{{+}^{k}{-}^{l-1}}
(d_{{+}{-}}\otimes P_{{+}^{k}{-}^{l-1}}^{{+}^{k}{-}^{l-1}})
(\mathbf{1}_{+}\otimes P_{{+}^{k}{-}^{l}}^{{+}^{k}{-}^{l}})g\\
&\quad=\frac{[k]}{[k+1]}
\,\tikz[baseline=-.6ex, scale=.1]{
\begin{scope}[yshift=5cm]
\draw[->-=.5] (2,1) -- (2,4);
\draw[-<-=.5] (7,1) -- (7,4);
\draw[fill=white] (0,0) rectangle (9,1);
\node at (2,4) [above]{${\scriptstyle k}$};
\node at (7,4) [above]{${\scriptstyle l-1}$};
\end{scope}
\draw[->-=.5] (-2,-2) -- (-2,2);
\draw (-2,-5) -- (-2,-2);
\draw (0,0) -- (0,2) -- (-2,2);
\draw (0,0) -- (0,-2) -- (-2,-2);
\draw[->-=.8] (0,-5) -- (0,-4) -- (-2,-4);
\draw[->-=.5] (3,-5) -- (3,0);
\draw[->-=.5] (2,0) -- (2,5);
\draw[-<-=.5] (6,-5) -- (6,0);
\draw[-<-=.5] (7,0) -- (7,5);
\draw[fill=white] (-1,0) rectangle (9,1);
\begin{scope}[yshift=-5cm, yscale=-1]
\draw[-<-=.5] (-2,0) -- (-2,4);
\draw[-<-=.5] (2,1) -- (2,4);
\draw[->-=.5] (6,1) -- (6,4);
\draw[fill=white] (-1,0) rectangle (9,1);
\node at (2,4) [below]{${\scriptstyle k}$};
\node at (6,4) [below]{${\scriptstyle l}$};
\end{scope}
}\,
+\frac{[l]}{[k+1][k+l+2]}
\,\tikz[baseline=-.6ex, scale=.1]{
\begin{scope}[yshift=5cm]
\draw[->-=.5] (2,1) -- (2,4);
\draw[-<-=.5] (7,1) -- (7,4);
\draw[fill=white] (0,0) rectangle (9,1);
\node at (2,4) [above]{${\scriptstyle k}$};
\node at (7,4) [above]{${\scriptstyle l-1}$};
\end{scope}
\draw[->-=.5] (-2,-2) -- (-2,2);
\draw (-2,-5) -- (-2,-4);
\draw (0,0) -- (0,2) -- (-2,2);
\draw (0,0) -- (0,-2) -- (-2,-2);
\draw (0,-5) -- (0,-4) -- (-2,-4);
\draw[->-=.5] (2,-5) -- (2,0);
\draw[->-=.5] (2,0) -- (2,5);
\draw[-<-=.5] (7,-5) -- (7,0);
\draw[-<-=.5] (7,0) -- (7,5);
\draw[fill=white] (-1,0) rectangle (9,1);
\node at (-2,0) [left]{${\scriptstyle 1}$};
\begin{scope}[yshift=-5cm, yscale=-1]
\draw[-<-=.5] (-2,0) -- (-2,4);
\draw[-<-=.5] (2,1) -- (2,4);
\draw[->-=.5] (6,1) -- (6,4);
\draw[fill=white] (-1,0) rectangle (9,1);
\node at (-2,4) [below]{${\scriptstyle 1}$};
\node at (2,4) [below]{${\scriptstyle k}$};
\node at (6,4) [below]{${\scriptstyle l}$};
\end{scope}
}\, \\
&\quad=\left(1-\frac{[l+1]}{[k+1][k+l+1]}\right)
\,\tikz[baseline=-.6ex, scale=.1]{
\begin{scope}[yshift=3cm]
\draw[->-=.5] (0,1) -- (0,4);
\draw[-<-=.5] (6,1) -- (6,4);
\draw[fill=white] (-3,0) rectangle (8,1);
\node at (0,4) [above]{${\scriptstyle k}$};
\node at (6,4) [above]{${\scriptstyle l-1}$};
\end{scope}
\draw[->-=.5] (-2,-2) -- (-2,3);
\draw (-2,-5) -- (-2,-2);
\draw[-<-=.8] (1,-5) -- (1,-2) -- (-2,-2);
\draw[->-=.8] (0,-5) -- (0,-4) -- (-2,-4);
\draw[->-=.5] (3,-5) -- (3,3);
\draw[-<-=.5] (6,-5) -- (6,3);
\begin{scope}[yshift=-5cm, yscale=-1]
\draw[-<-=.5] (-2,0) -- (-2,4);
\draw[-<-=.5] (2,1) -- (2,4);
\draw[->-=.5] (5,1) -- (5,4);
\draw[fill=white] (-1,0) rectangle (8,1);
\node at (2,4) [below]{${\scriptstyle k}$};
\node at (5,4) [below]{${\scriptstyle l}$};
\end{scope}
}\,
+\frac{[l]}{[k+1][k+l+2]}\times\frac{[l+1][l+k+2]}{[l][l+k+1]}
\tikz[baseline=-.6ex, scale=.1]{
\begin{scope}[yshift=3cm]
\draw[->-=.5] (2,1) -- (2,4);
\draw[-<-=.5] (6,1) -- (6,4);
\draw[fill=white] (0,0) rectangle (8,1);
\node at (2,4) [above]{${\scriptstyle k}$};
\node at (6,4) [above]{${\scriptstyle l-1}$};
\end{scope}
\draw (-2,-3) -- (-2,-2);
\draw (0,-3) -- (0,-2) -- (-2,-2);
\draw[->-=.5] (2,-3) -- (2,3);
\draw[-<-=.5] (6,-3) -- (6,3);
\begin{scope}[yshift=-3cm, yscale=-1]
\draw[-<-=.5] (-2,0) -- (-2,4);
\draw[-<-=.5] (2,1) -- (2,4);
\draw[->-=.5] (5,1) -- (5,4);
\draw[fill=white] (-1,0) rectangle (8,1);
\node at (-2,4) [below]{${\scriptstyle 1}$};
\node at (2,4) [below]{${\scriptstyle k}$};
\node at (5,4) [below]{${\scriptstyle l}$};
\end{scope}
}\,\\
&\quad=
\,\tikz[baseline=-.6ex, scale=.1]{
\begin{scope}
\draw[->-=.5] (2,1) -- (2,4);
\draw[-<-=.5] (6,1) -- (6,4);
\draw[fill=white] (0,0) rectangle (8,1);
\node at (2,4) [above]{${\scriptstyle k}$};
\node at (6,4) [above]{${\scriptstyle l-1}$};
\end{scope}
\draw (-2,-3) -- (-2,-2);
\draw (0,-3) -- (0,-2) -- (-2,-2);
\draw[->-=.5] (2,-3) -- (2,0);
\draw[-<-=.5] (6,-3) -- (6,0);
\begin{scope}[yshift=-3cm, yscale=-1]
\draw[-<-=.5] (-2,0) -- (-2,4);
\draw[-<-=.5] (2,1) -- (2,4);
\draw[->-=.5] (5,1) -- (5,4);
\draw[fill=white] (-1,0) rectangle (8,1);
\node at (-2,4) [below]{${\scriptstyle 1}$};
\node at (2,4) [below]{${\scriptstyle k}$};
\node at (5,4) [below]{${\scriptstyle l}$};
\end{scope}
}\,
=(d_{{+}{-}}\otimes P_{{+}^{k}{-}^{l-1}}^{{+}^{k}{-}^{l-1}})
(\mathbf{1}_{+}\otimes P_{{+}^{k}{-}^{l}}^{{+}^{k}{-}^{l}}).
\end{align*}
Let us define $\iota_1\colon P_{(k-1,l+1)}\to ({+}^{k+1}{-}^{l},g)$ and $\iota_2\colon P_{(k,l-1)}\to ({+}^{k+1}{-}^{l},g)$ by 
$(\mathbf{1}_{+}\otimes P_{{+}^{k}{-}^{l}}^{{+}^{k}{-}^{l}})
(t_{{-}}^{{+}{+}}\otimes P_{{+}^{k-1}{-}^{l}}^{{+}^{k-1}{-}^{l}})
P_{{+}^{k-1}{-}^{l+1}}^{{+}^{k-1}{-}^{l+1}}$
and 
$(\mathbf{1}_{+}\otimes P_{{+}^{k}{-}^{l}}^{{+}^{k}{-}^{l}})
(b^{{+}{-}}\otimes P_{{+}^{k}{-}^{l-1}}^{{+}^{k}{-}^{l-1}})$, 
respectively.
Because these webs are obtained by turning the webs in $p_1$ and $p_2$ upside down,
we can confirm these maps are morphisms in $\Kar$ by the same calculation in the above.
The rest of the proof is to confirm that $p_1$, $p_2$, $\iota_1$, and $\iota_2$ satisfy the condition in Lemma~\ref{projincl}.
It is easy to see that $p_i\circ\iota_j=0$ if $i\neq j$.
$p_1\circ \iota_1=P_{{+}^{k-1}{-}^{l+1}}^{{+}^{k-1}{-}^{l+1}}$ and $p_2\circ \iota_2=P_{{+}^{k}{-}^{l-1}}^{{+}^{k}{-}^{l-1}}$ are derived from (\ref{eqprop5}) and Lemma~\ref{lemOY}, respectively.
By Proposition~\ref{prop5},
\begin{align*}
&\iota_1\circ p_1+\iota_2\circ p_2
=\frac{[k]}{[k+1]}
\,\tikz[baseline=-.6ex, scale=.1]{
\begin{scope}[yshift=4cm]
\draw[->-=.5] (-2,0) -- (-2,3);
\draw[->-=.5] (2,1) -- (2,3);
\draw[-<-=.5] (7,1) -- (7,3);
\draw[fill=white] (-1,0) rectangle (9,1);
\node at (-2,3) [above]{${\scriptstyle 1}$};
\node at (2,3) [above]{${\scriptstyle k}$};
\node at (7,3) [above]{${\scriptstyle l}$};
\end{scope}
\draw (-2,-4) -- (-2,4);
\draw[->-=.5] (1,-4) -- (1,0);
\draw[->-=.5] (1,1) -- (1,4);
\draw[-<-=.5] (8,-4) -- (8,0);
\draw[-<-=.5] (8,1) -- (8,4);
\draw[->-=.5] (-2,3) -- (0,3) -- (0,4);
\draw[-<-=.5] (-2,-3) -- (0,-3) -- (0,-4);
\draw[fill=white] (-3,0) rectangle (9,1);
\node at (.5,-2) [right]{${\scriptstyle k-1}$};
\node at (8,-2) [right]{${\scriptstyle l}$};
\begin{scope}[yshift=-4cm, yscale=-1]
\draw[-<-=.5] (-2,0) -- (-2,3);
\draw[-<-=.5] (2,1) -- (2,3);
\draw[->-=.5] (7,1) -- (7,3);
\draw[fill=white] (-1,0) rectangle (9,1);
\node at (-2,3) [below]{${\scriptstyle 1}$};
\node at (2,3) [below]{${\scriptstyle k}$};
\node at (7,3) [below]{${\scriptstyle l}$};
\end{scope}
}\,
+\frac{[l][k+l+1]}{[l+1][k+l+2]}
\,\tikz[baseline=-.6ex, scale=.1]{
\begin{scope}[yshift=3cm]
\draw[->-=.5] (-2,0) -- (-2,4);
\draw[->-=.5] (2,1) -- (2,4);
\draw[-<-=.5] (6,1) -- (6,4);
\draw[fill=white] (-1,0) rectangle (8,1);
\node at (-2,4) [above]{${\scriptstyle 1}$};
\node at (2,4) [above]{${\scriptstyle k}$};
\node at (6,4) [above]{${\scriptstyle l}$};
\end{scope}
\draw[->-=.5] (2,-3) -- (2,3);
\draw[-<-=.5] (6,-3) -- (6,3);
\draw (-2,3) to[out=south, in=west] (-1,1) to[out=east, in=south] (0,3);
\draw (-2,-3) to[out=north, in=west] (-1,-1) to[out=east, in=north](0,-3);
\node at (2,0) [right]{${\scriptstyle k}$};
\node at (6,0) [right]{${\scriptstyle l-1}$};
\begin{scope}[yshift=-3cm, yscale=-1]
\draw[-<-=.5] (-2,0) -- (-2,4);
\draw[-<-=.5] (2,1) -- (2,4);
\draw[->-=.5] (6,1) -- (6,4);
\draw[fill=white] (-1,0) rectangle (8,1);
\node at (-2,4) [below]{${\scriptstyle 1}$};
\node at (2,4) [below]{${\scriptstyle k}$};
\node at (6,4) [below]{${\scriptstyle l}$};
\end{scope}
}\,\\
&\quad=\frac{[k]}{[k+1]}
\,\tikz[baseline=-.6ex, scale=.1]{
\begin{scope}[yshift=4cm]
\draw[->-=.5] (-2,0) -- (-2,3);
\draw[->-=.5] (1,1) -- (1,3);
\draw[-<-=.5] (7,1) -- (7,3);
\draw[fill=white] (-1,0) rectangle (9,1);
\node at (-2,3) [above]{${\scriptstyle 1}$};
\node at (1,3) [above]{${\scriptstyle k}$};
\node at (7,3) [above]{${\scriptstyle l}$};
\end{scope}
\draw (-2,-4) -- (-2,4);
\draw[-<-=.5] (2,-4) -- (2,0);
\draw[-<-=.5] (2,1) -- (2,4);
\draw[->-=.5] (6,-4) -- (6,0);
\draw[->-=.5] (6,1) -- (6,4);
\draw[->-=.5] (-2,3) -- (0,3) -- (0,4);
\draw[-<-=.5] (-2,-3) -- (0,-3) -- (0,-4);
\draw[fill=white] (-3,0) rectangle (9,1);
\node at (2,-2) [right]{${\scriptstyle l}$};
\node at (6,-2) [right]{${\scriptstyle k-1}$};
\begin{scope}[yshift=-4cm, yscale=-1]
\draw[-<-=.5] (-2,0) -- (-2,3);
\draw[-<-=.5] (1,1) -- (1,3);
\draw[->-=.5] (7,1) -- (7,3);
\draw[fill=white] (-1,0) rectangle (9,1);
\node at (-2,3) [below]{${\scriptstyle 1}$};
\node at (1,3) [below]{${\scriptstyle k}$};
\node at (7,3) [below]{${\scriptstyle l}$};
\end{scope}
}\,
+\frac{[l][k+l+1]}{[l+1][k+l+2]}
\,\tikz[baseline=-.6ex, scale=.1]{
\begin{scope}[yshift=3cm]
\draw[->-=.5] (-2,0) -- (-2,4);
\draw[->-=.5] (2,1) -- (2,4);
\draw[-<-=.5] (6,1) -- (6,4);
\draw[fill=white] (-1,0) rectangle (8,1);
\node at (-2,4) [above]{${\scriptstyle 1}$};
\node at (2,4) [above]{${\scriptstyle k}$};
\node at (6,4) [above]{${\scriptstyle l}$};
\end{scope}
\draw[->-=.5] (2,-3) -- (2,3);
\draw[-<-=.5] (6,-3) -- (6,3);
\draw (-2,3) to[out=south, in=west] (-1,1) to[out=east, in=south] (0,3);
\draw (-2,-3) to[out=north, in=west] (-1,-1) to[out=east, in=north](0,-3);
\node at (2,0) [right]{${\scriptstyle k}$};
\node at (6,0) [right]{${\scriptstyle l-1}$};
\begin{scope}[yshift=-3cm, yscale=-1]
\draw[-<-=.5] (-2,0) -- (-2,4);
\draw[-<-=.5] (2,1) -- (2,4);
\draw[->-=.5] (6,1) -- (6,4);
\draw[fill=white] (-1,0) rectangle (8,1);
\node at (-2,4) [below]{${\scriptstyle 1}$};
\node at (2,4) [below]{${\scriptstyle k}$};
\node at (6,4) [below]{${\scriptstyle l}$};
\end{scope}
}\,\\
&\quad=\frac{[k]}{[k+1]}\left(
\,\tikz[baseline=-.6ex, scale=.1]{
\begin{scope}[yshift=4cm]
\draw[->-=.5] (-2,0) -- (-2,3);
\draw[->-=.5] (1,1) -- (1,3);
\draw[-<-=.5] (7,1) -- (7,3);
\draw[fill=white] (-1,0) rectangle (9,1);
\node at (-2,3) [above]{${\scriptstyle 1}$};
\node at (1,3) [above]{${\scriptstyle k}$};
\node at (7,3) [above]{${\scriptstyle l}$};
\end{scope}
\draw (-2,-4) -- (-2,4);
\draw (2,-4) -- (2,1);
\draw[-<-=.5] (2,1) -- (2,4);
\draw (6,-4) -- (6,1);
\draw[->-=.5] (6,1) -- (6,4);
\draw[->-=.5] (-2,3) -- (0,3) -- (0,4);
\draw[-<-=.5] (-2,-3) -- (0,-3) -- (0,-4);
\node at (2,0) [right]{${\scriptstyle l}$};
\node at (6,0) [right]{${\scriptstyle k-1}$};
\begin{scope}[yshift=-4cm, yscale=-1]
\draw[-<-=.5] (-2,0) -- (-2,3);
\draw[-<-=.5] (1,1) -- (1,3);
\draw[->-=.5] (7,1) -- (7,3);
\draw[fill=white] (-1,0) rectangle (9,1);
\node at (-2,3) [below]{${\scriptstyle 1}$};
\node at (1,3) [below]{${\scriptstyle k}$};
\node at (7,3) [below]{${\scriptstyle l}$};
\end{scope}
}\,
-\frac{[l]}{[l+1]}
\,\tikz[baseline=-.6ex, scale=.1]{
\begin{scope}[yshift=4cm]
\draw[->-=.5] (-2,0) -- (-2,3);
\draw[->-=.5] (1,1) -- (1,3);
\draw[-<-=.5] (7,1) -- (7,3);
\draw[fill=white] (-1,0) rectangle (9,1);
\node at (-2,3) [above]{${\scriptstyle 1}$};
\node at (1,3) [above]{${\scriptstyle k}$};
\node at (7,3) [above]{${\scriptstyle l}$};
\end{scope}
\draw (-2,-4) -- (-2,4);
\draw (2,-4) -- (2,1);
\draw[-<-=.5] (2,1) -- (2,4);
\draw (8,-4) -- (8,1);
\draw[->-=.5] (8,1) -- (8,4);
\draw[->-=.5] (-2,3) -- (0,3) -- (0,4);
\draw[-<-=.2] (-2,1) -- (1,1) -- (1,4);
\draw[-<-=.5] (-2,-3) -- (0,-3) -- (0,-4);
\draw[->-=.2] (-2,-1) -- (1,-1) -- (1,-4);
\node at (1.5,0) [right]{${\scriptstyle l-1}$};
\node at (8,0) [right]{${\scriptstyle k-1}$};
\begin{scope}[yshift=-4cm, yscale=-1]
\draw[-<-=.5] (-2,0) -- (-2,3);
\draw[-<-=.5] (1,1) -- (1,3);
\draw[->-=.5] (7,1) -- (7,3);
\draw[fill=white] (-1,0) rectangle (9,1);
\node at (-2,3) [below]{${\scriptstyle 1}$};
\node at (1,3) [below]{${\scriptstyle k}$};
\node at (7,3) [below]{${\scriptstyle l}$};
\end{scope}
}\,
\right)
+\frac{[l][k+l+1]}{[l+1][k+l+2]}
\,\tikz[baseline=-.6ex, scale=.1]{
\begin{scope}[yshift=3cm]
\draw[->-=.5] (-2,0) -- (-2,4);
\draw[->-=.5] (2,1) -- (2,4);
\draw[-<-=.5] (6,1) -- (6,4);
\draw[fill=white] (-1,0) rectangle (8,1);
\node at (-2,4) [above]{${\scriptstyle 1}$};
\node at (2,4) [above]{${\scriptstyle k}$};
\node at (6,4) [above]{${\scriptstyle l}$};
\end{scope}
\draw[->-=.5] (2,-3) -- (2,3);
\draw[-<-=.5] (6,-3) -- (6,3);
\draw (-2,3) to[out=south, in=west] (-1,1) to[out=east, in=south] (0,3);
\draw (-2,-3) to[out=north, in=west] (-1,-1) to[out=east, in=north](0,-3);
\node at (2,0) [right]{${\scriptstyle k}$};
\node at (6,0) [right]{${\scriptstyle l-1}$};
\begin{scope}[yshift=-3cm, yscale=-1]
\draw[-<-=.5] (-2,0) -- (-2,4);
\draw[-<-=.5] (2,1) -- (2,4);
\draw[->-=.5] (6,1) -- (6,4);
\draw[fill=white] (-1,0) rectangle (8,1);
\node at (-2,4) [below]{${\scriptstyle 1}$};
\node at (2,4) [below]{${\scriptstyle k}$};
\node at (6,4) [below]{${\scriptstyle l}$};
\end{scope}
}\,\\
&\quad=\frac{[k]}{[k+1]}
\,\tikz[baseline=-.6ex, scale=.1]{
\begin{scope}[yshift=4cm]
\draw[->-=.5] (-2,0) -- (-2,3);
\draw[->-=.5] (1,1) -- (1,3);
\draw[-<-=.5] (7,1) -- (7,3);
\draw[fill=white] (-1,0) rectangle (9,1);
\node at (-2,3) [above]{${\scriptstyle 1}$};
\node at (1,3) [above]{${\scriptstyle k}$};
\node at (7,3) [above]{${\scriptstyle l}$};
\end{scope}
\draw (-2,-4) -- (-2,4);
\draw (2,-4) -- (2,1);
\draw[-<-=.5] (2,1) -- (2,4);
\draw (6,-4) -- (6,1);
\draw[->-=.5] (6,1) -- (6,4);
\draw[->-=.5] (-2,3) -- (0,3) -- (0,4);
\draw[-<-=.5] (-2,-3) -- (0,-3) -- (0,-4);
\node at (2,0) [right]{${\scriptstyle l}$};
\node at (6,0) [right]{${\scriptstyle k-1}$};
\begin{scope}[yshift=-4cm, yscale=-1]
\draw[-<-=.5] (-2,0) -- (-2,3);
\draw[-<-=.5] (1,1) -- (1,3);
\draw[->-=.5] (7,1) -- (7,3);
\draw[fill=white] (-1,0) rectangle (9,1);
\node at (-2,3) [below]{${\scriptstyle 1}$};
\node at (1,3) [below]{${\scriptstyle k}$};
\node at (7,3) [below]{${\scriptstyle l}$};
\end{scope}
}\,
-\frac{[l]}{[l+1]}\left(\frac{[k+l+1]}{[k+l+2]}-\frac{[k]}{[k+1]}\right)
\,\tikz[baseline=-.6ex, scale=.1]{
\begin{scope}[yshift=3cm]
\draw[->-=.5] (-2,0) -- (-2,4);
\draw[->-=.5] (2,1) -- (2,4);
\draw[-<-=.5] (6,1) -- (6,4);
\draw[fill=white] (-1,0) rectangle (8,1);
\node at (-2,4) [above]{${\scriptstyle 1}$};
\node at (2,4) [above]{${\scriptstyle k}$};
\node at (6,4) [above]{${\scriptstyle l}$};
\end{scope}
\draw[->-=.5] (2,-3) -- (2,3);
\draw[-<-=.5] (6,-3) -- (6,3);
\draw (-2,3) to[out=south, in=west] (-1,1) to[out=east, in=south] (0,3);
\draw (-2,-3) to[out=north, in=west] (-1,-1) to[out=east, in=north](0,-3);
\node at (2,0) [right]{${\scriptstyle k}$};
\node at (6,0) [right]{${\scriptstyle l-1}$};
\begin{scope}[yshift=-3cm, yscale=-1]
\draw[-<-=.5] (-2,0) -- (-2,4);
\draw[-<-=.5] (2,1) -- (2,4);
\draw[->-=.5] (6,1) -- (6,4);
\draw[fill=white] (-1,0) rectangle (8,1);
\node at (-2,4) [below]{${\scriptstyle 1}$};
\node at (2,4) [below]{${\scriptstyle k}$};
\node at (6,4) [below]{${\scriptstyle l}$};
\end{scope}
}\,\\
&\quad=\frac{[k]}{[k+1]}
\,\tikz[baseline=-.6ex, scale=.1]{
\begin{scope}[yshift=4cm]
\draw[->-=.5] (-2,0) -- (-2,3);
\draw[->-=.5] (1,1) -- (1,3);
\draw[-<-=.5] (7,1) -- (7,3);
\draw[fill=white] (-1,0) rectangle (9,1);
\node at (-2,3) [above]{${\scriptstyle 1}$};
\node at (1,3) [above]{${\scriptstyle k}$};
\node at (7,3) [above]{${\scriptstyle l}$};
\end{scope}
\draw (-2,-4) -- (-2,4);
\draw (2,-4) -- (2,1);
\draw[-<-=.5] (2,1) -- (2,4);
\draw (6,-4) -- (6,1);
\draw[->-=.5] (6,1) -- (6,4);
\draw[->-=.5] (-2,3) -- (0,3) -- (0,4);
\draw[-<-=.5] (-2,-3) -- (0,-3) -- (0,-4);
\node at (2,0) [right]{${\scriptstyle l}$};
\node at (6,0) [right]{${\scriptstyle k-1}$};
\begin{scope}[yshift=-4cm, yscale=-1]
\draw[-<-=.5] (-2,0) -- (-2,3);
\draw[-<-=.5] (1,1) -- (1,3);
\draw[->-=.5] (7,1) -- (7,3);
\draw[fill=white] (-1,0) rectangle (9,1);
\node at (-2,3) [below]{${\scriptstyle 1}$};
\node at (1,3) [below]{${\scriptstyle k}$};
\node at (7,3) [below]{${\scriptstyle l}$};
\end{scope}
}\,
-\frac{[l]}{[k+1][k+l+2]}
\,\tikz[baseline=-.6ex, scale=.1]{
\begin{scope}[yshift=3cm]
\draw[->-=.5] (-2,0) -- (-2,4);
\draw[->-=.5] (2,1) -- (2,4);
\draw[-<-=.5] (6,1) -- (6,4);
\draw[fill=white] (-1,0) rectangle (8,1);
\node at (-2,4) [above]{${\scriptstyle 1}$};
\node at (2,4) [above]{${\scriptstyle k}$};
\node at (6,4) [above]{${\scriptstyle l}$};
\end{scope}
\draw[->-=.5] (2,-3) -- (2,3);
\draw[-<-=.5] (6,-3) -- (6,3);
\draw (-2,3) to[out=south, in=west] (-1,1) to[out=east, in=south] (0,3);
\draw (-2,-3) to[out=north, in=west] (-1,-1) to[out=east, in=north](0,-3);
\node at (2,0) [right]{${\scriptstyle k}$};
\node at (6,0) [right]{${\scriptstyle l-1}$};
\begin{scope}[yshift=-3cm, yscale=-1]
\draw[-<-=.5] (-2,0) -- (-2,4);
\draw[-<-=.5] (2,1) -- (2,4);
\draw[->-=.5] (6,1) -- (6,4);
\draw[fill=white] (-1,0) rectangle (8,1);
\node at (-2,4) [below]{${\scriptstyle 1}$};
\node at (2,4) [below]{${\scriptstyle k}$};
\node at (6,4) [below]{${\scriptstyle l}$};
\end{scope}
}\, .
\end{align*}
The last equation uses a formula $[a][b]=\sum_{i=1}^{a}[a+b-(2i-1)]$ where $a$ and $b$ are integers.
Thus, the condition in Lemma~\ref{projincl} is satisfied and we obtained the isomorphism $({+}^{k+1}{-}^l,g)\cong P_{(k-1,l+1)}\oplus P_{(k,l-1)}$ in $\Kar$.
In terms of $K_0(\Kar)$, the equation (\ref{eqCK3}) is interpreted as
\begin{align*}
X\langle P_{(k,l)}\rangle-\langle P_{(k+1,l)}\rangle=\langle({+}^{k+1}{-}^l,g)\rangle=\langle P_{(k-1,l+1)}\rangle+\langle P_{(k,l-1)}\rangle.
\end{align*}

\end{proof}

\bibliographystyle{amsalpha}
\bibliography{2Chebyshev}
\end{document}